\newcommand{\ccball}{{\mathcal B}}
\newcommand{\ccsphere}{{\mathcal S}}
\renewcommand{\aa}{\hat{a}}
\renewcommand{\AA}{\hat{\sf a}}
\newcommand{\A}{{\sf a}}
\newcommand{\B}{{\sf b}}
\newcommand{\C}{{\sf c}}
\newcommand{\E}{{\sf e}}
\renewcommand{\S}{{\sf s}}
\newcommand{\T}{{\sf t}}
\newcommand{\V}{{\sf v}}
\newcommand{\U}{{\sf u}}
\newcommand{\W}{{\sf w}}
\newcommand{\X}{{\sf x}}
\newcommand{\M}{\mathfrak{m}}
\newcommand{\uglyH}{\mathfrak{h}}
\newcommand{\Z}{\mathbb{Z}}
\newcommand{\N}{\mathbb{N}}
\newcommand{\R}{\mathbb{R}}
\newcommand{\calL}{{\mathcal L}}
\newcommand{\III}{{\mathcal I}}
\newcommand{\Area}{{\sf Area}}
\newcommand{\Cay}{{\sf Cay}}
\newcommand{\std}{{\sf std}}
\DeclareMathOperator{\Cone}{{Cone}}
\DeclareMathOperator{\Dom}{{Dom}}
\DeclareMathOperator{\DomComp}{{DomComp}}
\DeclareMathOperator{\Shape}{{Shape}}
\DeclareMathOperator{\Patt}{{Patt}}
\newcommand{\SSS}{{\mathbb S}}
\newcommand{\BBB}{{\mathbb B}}
\newcommand{\sdot}{\! \cdot \!}
\newcommand{\sides}{{2k}}
\DeclareMathOperator{\CHull}{{CHull}}
\newcommand{\CC}{{\sc cc }}
\newcommand{\dCC}{d_{\hbox{\tiny \sc cc}}}
\newcommand{\normL}[1]{\| #1 \|
  \raisebox{-3pt}{\scriptsize{\ensuremath L}}}
\newcommand{\zero}{{\sf 0}}
\newcommand{\II}{{\hbox{\sc i}}}
\newcommand{\JJ}{{\hbox{\sc j}}}
\newcommand{\isoper}{{\mathbf{I}}}
\newcommand{\ii}{{\hbox{\tiny \sc i}}}
\newcommand{\jj}{{\hbox{\tiny \sc j}}}
\newcommand\eval[1]{\overline{#1}}
\let\hat\widehat
\theoremstyle{definition}
\newtheorem{Thm}{Theorem}
\newtheorem{theorem}[Thm]{Theorem}
\newtheorem{question}[Thm]{Question}
\newtheorem{proposition}[Thm]{Proposition}
\newtheorem{lemma}[Thm]{Lemma}
\newtheorem{corollary}[Thm]{Corollary}
\newtheorem{definition}[Thm]{Definition}
\newtheorem{example}[Thm]{Example}
\newtheorem*{remark}{Remark}
\begin{document}

\title{Rational growth in the Heisenberg group} \author{Moon Duchin
  and Michael Shapiro} 
\date{\today}  

\begin{abstract}
A group presentation is said to have rational growth if the
generating series associated to its growth function represents
 a rational function.  
A long-standing open question asks whether the Heisenberg group has {\em rational
growth for all finite generating sets}, and we settle this
question affirmatively.  
We also establish {\em almost-convexity for all finite generating sets}.
Previously, both of these properties were known to hold for all hyperbolic groups
and all virtually abelian groups, and there were no further examples in either case.
Our main method is a close description of the relationship between word metrics
and associated Carnot-Carath\'eodory Finsler metrics on the ambient Lie group.
We provide (non-regular) languages in any word metric that suffice to represent 
all group elements.
\end{abstract}

  \maketitle

\section{Introduction}

Growth functions of finitely-generated groups count the number of
elements that can be spelled as words in a generating alphabet, as a function of
spelling length.  Though the functions themselves depend on a choice
of generating set, they become group invariants under the standard
equivalence relation that allows affine rescaling of domain---in
particular, this preserves the property of having polynomial growth 
of a particular degree.

It has been known since the early 1970s that all nilpotent groups have
growth functions in the polynomial range, in fact bounded above and
below by polynomials of the same degree, and the degree was  computed by Bass
and Guivarc'h independently \cite{bass1972,guivarch1970,guivarch1973}.
A breakthrough theorem of Gromov states that in fact any group with
growth bounded above by a polynomial is virtually nilpotent
\cite{gromov-poly-growth}.

One can still wonder, however, whether the growth function is
precisely polynomial.  This turns out to be a bit too much to ask for
nilpotent groups.  Virtually abelian groups, for instance, have a slightly more
general property called {\em rational growth}: no matter what finite
generating set is chosen, the power series associated to the growth
function represents a rational function.  

Hyperbolic groups have rational growth for all generators---this is an
important theorem from the early 1980s for which credit can be shared
among Cannon, Thurston, and Gromov \cite{cannon1980, cannon1984,
  wordproc, gromov-hyp-gps}.  (This has an  interesting
history: Cannon's argument for fundamental groups of closed hyperbolic
manifolds directly generalized to hyperbolic groups once that
definition was in place.  And Thurston's definition of automatic
groups was partly motivated by these ideas.)  At almost the same time,
Benson established the same result for virtually abelian groups
\cite{benson1983}.  Given the work at the time understanding the
growth of nilpotent groups, it was a natural question to ask whether
nilpotent groups also have rational growth, which was open even for
the simplest non-abelian nilpotent group, the {\em integer Heisenberg
  group}.  This question was posed or referred to by many authors,
including \cite{dlh, grig-dlh, benson1987, shapiro1989, mann,
  stoll1996}.  By the late 1980s, Benson and Shapiro had independently
established one piece of this: the Heisenberg group has rational
growth in its standard generators.  We settle the full question here.
\begin{theorem}\label{thm::rationalGrowth}
The Heisenberg group has rational growth for all generating sets.
\end{theorem}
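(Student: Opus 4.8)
The plan is to transfer the counting problem from the discrete group to the ambient real Heisenberg group $H_3(\R)$, where a Carnot--Carath\'eodory Finsler metric attached to the generating set governs word length up to a controlled correction, and then to carry out an exact lattice-point count. First I would fix a finite symmetric generating set $S$, write group elements in coordinates $(a,b,c)$ with $(0,0,c)$ central, and let $\pi$ denote abelianization onto $\Z^2$. The polygon $P=\CHull(\pi(S))$ is the unit ball of a polygonal norm $\|\cdot\|_P$ on $\R^2$, and this norm determines a left-invariant CC--Finsler metric $d_\infty$ on $H_3(\R)$: the distance from the identity to $(x,y,z)$ is the $\|\cdot\|_P$-length of a shortest planar path from the origin to $(x,y)$ whose enclosed signed area, corrected by the chord term, equals $z$. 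This is an isoperimetric problem whose extremals are explicit --- they run along the boundary of a rescaled copy of $P$ some number of times together with at most a bounded number of partial edges --- and these extremals are the continuous model for $S$-geodesics in $H$: a word $s_1\cdots s_n$ abelianizes to a lattice polygonal path, its length equals $\sum\|\pi(s_i)\|_P$ only when that path hugs the boundary of a scaled $P$, and its $c$-coordinate records the enclosed area, so the efficient words are exactly the near-optimal solutions of a discrete isoperimetric problem.

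The heart of the argument, and the step where I expect the real difficulty, is to upgrade this soft picture into a precise statement: every $g\in H$ admits an $S$-geodesic of a stereotyped combinatorial shape --- a prefix spelling near-optimal ``loops'' (cyclically ordered powers of fixed edge-words of $P$) followed by a suffix of uniformly bounded length --- and the defect $|g|_S-d_\infty(e,g)$, rather than merely being bounded, depends on $g$ only through finitely many linear and congruence conditions on the data of that shape. This demands understanding the discrete-to-continuous comparison sharply enough to survive near the central axis $\pi^{-1}(0)$, where the isoperimetric problem degenerates and small perturbations of the planar path trade off against large changes in enclosed area; keeping uniform control there, over all of $H$ at once, is the main obstacle, and is also what should yield almost-convexity as a by-product. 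Granting it, one obtains a language $L\subseteq S^*$ of normal forms --- one per group element, or boundedly many with an explicit accounting for the multiplicity --- which is not regular (it must record equalities and inequalities among block exponents) but is a finite union of sets of the form $w_0\,w_1^{e_1}\cdots w_r^{e_r}\,w_{r+1}$, where the tuple $(e_1,\dots,e_r)$ ranges over the lattice points of a rational polyhedron and the realized triple $(a,b,c)$ is a fixed affine-then-bilinear function of the $e_i$.

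Finally I would count. For each of the finitely many combinatorial types, the number of group elements realized at length exactly $n$ is, by the previous paragraph, the number of lattice points in a parametric rational polytope with parameter $n$, hence an eventually quasi-polynomial function of $n$; equivalently, the corresponding piece of the growth series of $(H,S)$ is a rational function of $t$ by the standard generating-function calculus for semilinear sets and Ehrhart quasi-polynomials. An inclusion--exclusion over which edges of $P$ are used reconciles the finitely many types without over- or under-counting, the bounded-length exceptional set near the central axis contributes only a polynomial in $t$, and summing the finitely many rational pieces shows that the full growth series represents a rational function, which is Theorem~\ref{thm::rationalGrowth}. The argument is uniform in $S$: only finitely many combinatorial types occur, and all of their data --- the edge vectors of $P$, the bilinear area form, the correction conditions --- are read off directly from $S$.
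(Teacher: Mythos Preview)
Your outline captures the overall architecture of the paper --- compare the word metric to the \CC\ Finsler metric, produce a finite list of parametrized ``shapes'' whose evaluations hit every group element geodesically, and then count using Benson's polyhedral machinery --- but it has a genuine gap at the crucial step, and two smaller errors.

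The main gap is this. You correctly note that the realized triple $(a,b,c)$ is an ``affine-then-bilinear'' function of the run-length parameters $(e_1,\dots,e_r)$: the horizontal position $(a,b)$ and the length $\ell$ are affine, but the height $c$ is \emph{quadratic}. You then assert that the number of group elements of length $n$ in each type is the number of lattice points in a parametric rational polytope. This does not follow. To count group elements rather than spellings, you must decide for each $(a,b,c)$ which shape spells it geodesically; equivalently, given a spelling $\omega(X)$ of length $\ell$, you must test whether any competing shape $\omega'$ reaches the same $(a,b,c)$ at shorter length. That test compares $z(\omega(X))$ against $z(\omega'(X'))$ for $(X,X')$ with the same horizontal endpoint, and both sides are quadratic in their parameters. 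A priori the locus where one quadratic dominates another is \emph{not} cut out by linear inequalities and congruences, so your appeal to Ehrhart/Benson counting is unjustified. The paper's key technical contribution is precisely a \emph{Linear Comparison Lemma}: when two shapes compete (same $(a,b)$, lengths within the bounded-difference constant $K$), their traces fellow-travel an $\isoper$--arc, the quadratic parts of their heights agree, and the height \emph{difference} $z(\omega(X))-z(\omega'(X'))$ is genuinely linear on the domain of competition. Only with this lemma does geodesity become a polyhedral condition. Your inclusion--exclusion over edges of $P$ does not substitute for it.

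Two smaller points. First, the extremals of the \CC\ isoperimetric problem trace the boundary of the \emph{isoperimetrix} $\isoper$, the rotated polar dual of $Q=\CHull(\pi(S))$, not of $P$ itself; this matters for the combinatorics of shapes. Second, the ``unstable'' elements --- those with $|c|\le W(a,b)$ --- are not a bounded exceptional set near the central axis contributing only a polynomial. In each fiber the unstable range has size quadratic in $|(a,b)|$, and the unstable contribution to $\sigma(n)$ is of the same order $n^3$ as the regular one. The paper handles them by a separate family of \emph{patterns} with their own (quadratic) height-interval polynomials and their own linear comparison lemma; you cannot simply discard them.
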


In the process of establishing this fact, we will get quite precise
information about the combinatorial geometry of Heisenberg geodesics
(Theorem~\ref{thm::shapes}) that will be useful in the further study
of the geometric group theory of $H(\Z)$, and should therefore
have applications to complex hyperbolic lattices with Heisenberg cusps.
We give remarks,
applications (including almost-convexity), and open questions in the last section.

\subsection{Literature}

We review what is known about rationality of growth in groups and classes of groups.

\medskip

\noindent \begin{tabular}{c|c|c}
For all $S$ & For at least one $S$ & For no $S$ \\
\hline
hyperbolic groups & some automatic groups & unsolvable word problem \\
virtually abelian groups & Coxeter groups, standard $S$ & intermediate growth \\
{\bf Heisenberg group} $H$ &  \st{$H$, standard $S$} & \\
& $H_5$, cubical $S$  & \\
& $BS(1,n)$, standard $S$ & 
\end{tabular}

\medskip

Automatic groups have rational growth when the automatic
  structure consists of geodesics.  In this case, there is a regular
  language of geodesics which bijects to the group; this is used in
\cite{neumann-shapiro} to study groups that act geometrically finitely
on hyperbolic space.  There are more examples belonging in the middle
category---known to have rational growth in a special generating
set---found in work of Barr\'e (quotients of triangular buildings),
Alonso (amalgams), Brazil (other Baumslag-Solitar groups), Johnson
(wreath products and torus knot groups), and others.  For references
and an excellent survey, see \cite{grig-dlh}.

The nilpotent cases go as follows.  As mentioned above,  \cite{benson1987, shapiro1989}
show that $H$ has rational growth in standard generators.  In
\cite{stoll1996}, Stoll proves the following remarkable result: the higher Heisenberg group
$H_5$
has transcendental growth in its standard generators, but rational
growth in a certain dual generating set, which we will call {\em
  cubical generators}.  (See Sec~\ref{sec::higherH} for a definition of $H_5$.)
On the other hand, Stoll establishes the following theorem to use as a
criterion for transcendental growth.

\begin{theorem}[Stoll \cite{stoll1996}]\label{thm::stoll}
If $\frac{\beta(n)}{\alpha\cdot n^d} \to 1$ and $\alpha$ is a
transcendental number, then $\BBB(x)=\sum\beta(n) x^n$ is a
transcendental function.
\end{theorem}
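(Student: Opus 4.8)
The plan is to argue by contradiction via singularity analysis of algebraic generating functions, the point being that the Puiseux expansion of an algebraic function at an algebraic point has algebraic coefficients, so a transcendental constant cannot be read off from one. Suppose $\BBB(x)=\sum\beta(n)x^n$ is algebraic, so $P(x,\BBB(x))=0$ identically for some nonzero $P\in\overline{\Q}[x,y]$. Since $\beta(n)\sim\alpha n^d$ and $\alpha\neq 0$ (transcendental numbers being nonzero), the root test gives that $\BBB$ has radius of convergence exactly $1$. An algebraic function has only finitely many singularities, each a zero of the discriminant of $P$ (or of its leading coefficient in $y$), hence an algebraic number; near each one $\BBB$ has a convergent Puiseux expansion \emph{with no logarithmic part}, and continues analytically into a slit neighbourhood. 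Thus $\BBB$ extends to a $\Delta$-domain and the Flajolet--Odlyzko transfer theorems apply.

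Next I would pin down the singular structure on $|x|=1$. A singularity $\zeta=e^{i\theta}$ whose leading Puiseux term is $c_\zeta(1-x/\zeta)^{-s_\zeta}$ (with $s_\zeta\in\Q$, $s_\zeta\notin\Z_{\leq 0}$) contributes $\frac{c_\zeta}{\Gamma(s_\zeta)}\,\zeta^{-n}n^{s_\zeta-1}(1+O(1/n))$ to $\beta(n)$. For $\zeta\neq 1$ this term oscillates: together with its conjugate it behaves like $n^{s_\zeta-1}\cos(n\theta+\varphi_\zeta)$, and a nontrivial finite sum of pure oscillations at distinct nonzero frequencies is never eventually constant. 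Since $\beta(n)/(\alpha n^d)\to 1$ forces $\beta(n)/n^d$ to converge to a nonzero constant, a contribution of order above $n^d$ is impossible (at $\zeta=1$ it would change the growth rate; off $\zeta=1$ its oscillation would be unbounded after division by $n^d$), a maximal order below $n^d$ would give $\beta(n)=o(n^d)$, and at the maximal order $n^d$ only $\zeta=1$ can contribute, necessarily with real leading exponent $-s$ where $s=d+1$. (Here $d$ is the degree of polynomial growth, a positive integer, so $s=d+1\in\Z_{>0}$ and no $\Gamma$-value subtleties arise; for merely rational $d$ one would additionally need transcendence of $\Gamma(d+1)$.) Writing the expansion at $x=1$ as $\BBB(x)=c_0(1-x)^{-(d+1)}+(\text{higher order})$ with $c_0\neq 0$ and transferring, $\beta(n)\sim\frac{c_0}{\Gamma(d+1)}n^d=\frac{c_0}{d!}n^d$, so $\alpha=c_0/d!$.

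Finally I would show $c_0\in\overline{\Q}$. The function $g(x):=(1-x)^{d+1}\BBB(x)$ is again algebraic over $\overline{\Q}(x)$, and by construction its Puiseux expansion at $x=1$ has nonnegative leading exponent with constant term $c_0$, so $g(x)\to c_0$ as $x\to 1$ inside the disc. If $Q(x,g(x))=0$ with $0\neq Q\in\overline{\Q}[x,y]$, then after dividing $Q$ by the largest possible power of $(x-1)$ we may assume $Q(1,\cdot)\not\equiv 0$; the function $g$ still satisfies this reduced equation, and letting $x\to 1$ gives $Q(1,c_0)=0$, so $c_0$ is a root of a nonzero polynomial over $\overline{\Q}$, hence algebraic. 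Then $\alpha=c_0/d!\in\overline{\Q}$, contradicting the hypothesis that $\alpha$ is transcendental.

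The main obstacle is the middle step: one must be certain that $\beta(n)\sim\alpha n^d$ with $\alpha\neq 0$ is compatible only with this clean singular picture --- that Galois-conjugate singularities on $|x|=1$ cannot conspire through their oscillations to manufacture the stated limit, and that no logarithmic enhancement of the singularity at $1$ occurs (the latter is exactly where algebraicity is used, beyond mere $D$-finiteness). The remaining ingredients --- the transfer theorem, algebraicity of the singular locations, and algebraicity of the singular coefficient --- are standard for algebraic power series; see Flajolet and Sedgewick. An alternative avoiding analytic machinery would note that an algebraic series is $P$-recursive and try to read $\alpha$ off the polynomial recurrence satisfied by $\beta(n)$ together with the known asymptotics of $P$-recursive sequences, but controlling the admissible growth scales of such a recurrence is comparably delicate.
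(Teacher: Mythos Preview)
The paper does not prove this theorem; it is quoted from Stoll's paper \cite{stoll1996} and used only as a black-box criterion, so there is no ``paper's own proof'' to compare against.

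That said, your argument is sound and is essentially the standard route. Two small points worth tightening. First, you pass from ``$\BBB$ is algebraic'' to ``$P\in\overline{\Q}[x,y]$'' without comment; this is legitimate because the $\beta(n)$ are integers, and a power series with rational coefficients that is algebraic over $\C(x)$ is already algebraic over $\Q(x)$ (the vanishing $P(x,\BBB(x))=0$ is a $\Q$-linear condition on the coefficients of $P$), but it deserves a sentence. Second, the step you flag as the ``main obstacle'' is in fact routine once stated cleanly: the finitely many dominant singularities $\zeta_1,\dots,\zeta_r$ on $|x|=1$ contribute a leading term $n^{s-1}\sum_j c_j\zeta_j^{-n}$, and a finite exponential sum $\sum_j c_j\zeta_j^{-n}$ with distinct $\zeta_j$ on the unit circle converges as $n\to\infty$ only if every $c_j$ with $\zeta_j\neq 1$ vanishes (by linear independence of characters, or just Weyl equidistribution when some $\zeta_j$ is not a root of unity and orthogonality when they all are). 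This disposes of any ``conspiracy'' worry. Your extraction of $c_0$ as a root of $Q(1,\cdot)$ after clearing powers of $(x-1)$ is correct: if $Q(1,\cdot)\equiv 0$ then $(x-1)$ divides every $y$-coefficient of $Q$, so the division stays in $\overline{\Q}[x,y]$.
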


A volume computation gives $\alpha=\frac{6027+2\ln 2}{65610}$,
establishing that $(H_5,\std)$ has transcendental growth.  Over
fifteen years later, this (with small variations explained by Stoll)
still provides the only known example of a group with both rational
and irrational growth series.

\subsection*{Acknowledgments}

The authors have been thinking about this problem for a long time and
have many people to thank for interesting ideas and stimulating
conversations.  Particular thanks go to Christopher Mooney, to Cyril
Banderier, and to Laurent Bartholdi for initially communicating the
interest of this problem.  We thank Dylan Thurston for stimulating
discussions of computational aspects, including calculations of 
periods and coefficients of quasipolynomiality.

The first author is supported by NSF grants 
DMS-1207106 and DMS-1255442.

MS wishes to thank MD for inviting him on such a grand adventure.

\section{Outline}

In this paper, we will give a way to compare geodesics in the Cayley
graph of $H(\Z)$ with geodesics in a geometrically much simpler
continuous metric on $\R^3$.
We will show that word geodesics are not too different from these
simpler paths, classifying them by ``shape."  Since these geodesics in
$\R^3$ can be understood in terms of their projection into $\R^2$,
this allows us to use planar pictures to understand geodesics in
$H(\Z)$.

By an important theorem of Pansu \cite{pansu}, any
word metric on the Heisenberg group $H(\Z)$ is asymptotic to a
left-invariant metric on its ambient Lie group $H(\R)$, known as a
Carnot-Carath\'eodory ({\sc cc}) Finsler metric, which admits $\R^3$
coordinates.  (Pansu's theorem is much more general, and was further
generalized by Breuillard in \cite{breuillard}.)  Several authors have
studied the geodesics in these \CC metrics, including Krat, Stoll,
Breuillard, and Duchin--Mooney, and we will stay close to the notation
of \cite{dm}.  
It is to Pansu's \CC geodesics that we will 
compare our word geodesics.

The $xy$--plane in $\R^3$ will be identified with the $XY$--subspace
of the Lie algebra of $H(\R)$, and we will denote this plane by $\M$.
A generating set of $H$ induces a norm on $\M$ in a manner described
further below, and word geodesics come in two kinds.  The ``unstable"
kind behave much like geodesics in free abelian groups, and these are 
modeled by finitely many {\em patterns} which are close to reorderings
of free abelian geodesics.

The ``regular" geodesics behave differently.  Among these is a subset
whose projections to $\M$ fellow-travel the boundary of a
characteristic polygon $\isoper$ determined by $S$.  We will see that
this subset contains at least one geodesic for each group element.
Once we make this precise we will have described finitely many
languages, which we call {\em shapes}.  
The words of each language are parameterized by the lengths of runs of
particular letters.  Thus, each shape is a map from a subset of $\Z^M$
to spellings in $S^*$.

We will show that every group element has a geodesic spelling produced
by a pattern or a shape (Theorem~\ref{thm::shapes}), even though it is not true
that all geodesics are so obtained, nor is it necessarily true that
all spellings produced by these shapes  and patterns are geodesic.

The domains of patterns and shapes in $\Z^M$ are determined by linear
equalities, inequalities, and congruences, and so counting the spellings
enumerated by the shapes amounts to solving congruences in rational
polyhedra.  By a marvelous theorem of Benson \cite{benson1987},
enumeration over rational polyhedra yields a rational function.  

However, this is not yet sufficient for rationality of growth. 
For each group element $g=(a,b,c)$ we
must determine which shapes might produce a spelling for $g$ and among
these shapes we must determine which one(s) win the competition to produce a
shortest spelling.  While the horizontal position $(a,b)$ varies linearly over the shape's 
domain, the height or $z$--coordinate $c$ of the elements produced
by each shape $\omega$ varies {\em quadratically},
which poses a problem for counting.  However, we show that whenever two 
shapes compete for geodesity in spelling a group element, the {\em difference} in 
the heights they produce is linear on the domain of competition (Sec~\ref{sec::competition}).
These linear comparison lemmas then 
allow us to enumerate the elements of each 
length $n$ using only linear equations, inequalities, and congruences,
which finally establishes rational growth.

\subsection{Example:  Shapes in $\Z^2$}

To illustrate the idea of shapes of geodesics, consider the example of
$\Z^2$, first with standard generators $a,b$.  Here, we will introduce
four shapes: $a^{m}b^{-n}$, $a^{-m}b^{n}$, $a^{m}b^{n}$ and
$a^{-m}b^{-n}$.
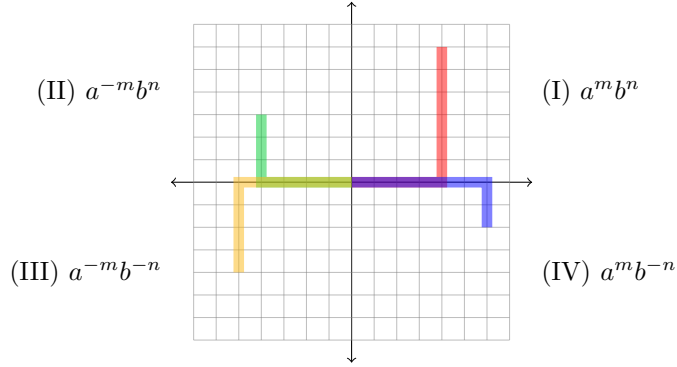
\begin{figure}[ht]
\begin{tikzpicture}[scale=.6]

\draw [<->] (-4,0)--(4,0) ; 
\draw [<->] (0,-4)--(0,4);
\foreach \x in {-3.5,-3,-2.5,...,3.5}
{\draw [gray,opacity=.5] (\x,-3.5)--(\x,3.5);
\draw [gray,opacity=.5] (-3.5,\x)--(3.5,\x);}

\draw [opacity=.5,red,line width=4] (0,0)--(2,0)--(2,3);
\draw [opacity=.5,blue, line width=4] (0,0)--(3,0)--(3,-1);
\draw [opacity=.5,green!80!blue,line width=4] (0,0)--(-2,0)--(-2,1.5);
\draw [opacity=.5,yellow!70!red,line width=4] (0,0)--(-2.5,0)--(-2.5,-2);
\node at (4,2) [right] {(I) $a^m b^n$};
\node at (4,-2) [right] {(IV) $a^m b^{-n}$};
\node at (-4,2) [left] {(II) $a^{-m} b^{n}$};
\node at (-4,-2) [left] {(III) $a^{-m} b^{-n}$};
\end{tikzpicture}
\caption{Four shapes of geodesics for $(\Z^2,\std)$.  (Take $m,n\ge 0$ in each case.)}
\end{figure}

One quickly observes a few basic properties:
\begin{itemize}
\item There are finitely many shapes.
\item Each shape is a language, and a map from a subset of some $\Z^M$
  to $S^*$.  (Here, $M=2$ for each shape, and the domain is the first
  quadrant of $\Z^2$.)
\item Every group element admits a geodesic spelling by at least one
  shape (even though not every geodesic is realized this way).
\end{itemize}

This case is too simple to capture some features of the situation, so
consider the slightly more complicated case of $\Z^2$ with {\em
  chess-knight} generators $\{(\pm 2,\pm 1),(\pm 1,\pm 2)\}$.
Consider the case of geodesically spelling the group element
$(100,100)$.  If we let $\U=(2,1)$, $\V=(1,2)$, and $\W=(2,-1)$ then
$\U^{33}\V^{33}$ reaches an adjacent position in $\Z^2$, but an exact
spelling (in fact a geodesic spelling) requires two more letters:
$(\W\V)\U^{33}\V^{33}$.  (This is because of the well-known property
of chess-knights that it takes several moves to arrive at an adjacent
square on the chessboard.)  These correction terms never have more
than three letters, so we can arrive at a finite list of shapes: every
shape has the form $\X\sdot\A_i^m \A_{i+1}^n$, where $\X$ is in the
ball of radius three, $\A_i$ and $\A_{i+1}$ are cyclically successive
generators, and $m,n\ge 0$.

Now we can add to the list of properties:

\begin{itemize}
\item A shape may not evaluate to a geodesic for every value of its
  arguments.
\item The set of positions reached by each shape is given by the
  simultaneous solution of finitely many linear inequalities and
  congruences in the plane.
\item If a group element is reached by more than one shape, there is a
  linear function that compares the spelling length required by each
  shape.
\end{itemize}

These are the essential features that we will establish in the
Heisenberg group for an appropriate finite list of shapes though the
linear comparison feature will play a somewhat different role.

\section{Background}

\subsection{Growth of groups}

Suppose a group $G$ is generated by the finite symmetric generating set
$S=S^{-1}$.  We take $S^n$ to be the set of all (unreduced) strings of
length $n$ in the elements of $S$ (sometimes called {\em spellings})
and $S^* = \cup_{n=0}^\infty S^n$ to be the set of all spellings of
any finite length.  This $S^*$ comes equipped with two important maps,
{\em spelling length} and {\em evaluation} into $G$.  Length, denoted
$\ell(\gamma)$, is defined on $\gamma \in S^n \subset S^*$ via
$\ell(\gamma) = n$.  Evaluation into $G$ is given by the monoid
homomorphism which carries concatenation in $S^*$ to group
multiplication in $G$.  
An element of $S^*$ can be thought
of as a path in the Cayley graph $\Cay(G,S)$ from $e$ to
the evaluation of $\gamma$.

We define the {\em word length} of a group element $g\in G$ by
$$|g| = |g|_S = \min\{\ell(\gamma) \mid \text{$\gamma \in S^*$ and
  $\eval{\gamma}=g$} \} ,$$ i.e., the shortest spelling length of any spelling.

The {\em sphere} and {\em ball} of radius $n$ are denoted $S_n,B_n$
respectively, and the associated growth functions are
\begin{align*}
\sigma(n):=\#S_n &= \#\{g \in G : |g|= n\} \ ; \\
\beta(n):=\#B_n &= \#\{g \in G : |g| \le n\} \ ,
\end{align*}
related of course by $\sigma(n)=\beta(n)-\beta(n-1)$.  Then we can
form associated generating functions, called the {\em spherical growth
  series} and the {\em growth series} of $(G,S)$, as follows:
$$\SSS(x) := \sum_{n=0}^\infty \sigma(n) x^n \ ; \qquad 
\BBB(x):= \sum_{n=0}^\infty \beta(n) x^n   .$$

Since $\sigma(n)\le \beta(n) \le \sum_{i=0}^n |S^i| =
\sum_{i=0}|S|^i$, the coefficients are bounded above by an
exponential, ensuring a positive radius of convergence for both
series.

We say that $(G,S)$ has {\em rational growth} if the growth series are
rational functions (i.e., each is a ratio of polynomials in $x$).
Note that the relationship between $\sigma$ and $\beta$ implies that
$(1-x)\BBB(x)=\SSS(x)$, so either is rational iff the other is.

It is a standard fact that rationality of a generating function
$F(x)=\sum f(n) x^n$ is equivalent to the property that the values
$f(n)$ satisfy a finite-depth linear recursion for $n\gg 1$, i.e.,
there exist $N_0$ and $P$ such that for $n>N_0$,
$$f(n+P)= a_0 \sdot f(n) + a_1\sdot f(n+1) + \cdots + a_{P-1}\sdot f(n+P-1).$$
(Here, the coefficients $a_i$ come from the same base field as the polynomials in the rational function.)

The growth of a regular language is necessarily rational with integer
coefficients, and therefore the values $\sigma(n)$ satisfy an integer
recursion.  In fact, this recursion can be described in terms of the
finite-state automaton which accepts the language, and therefore can
be written with non-negative integer coefficients in the recursion.
In the case of groups, if there is a generating set for which there is
a regular language of geodesics which bijects to the group, then the
corresponding growth function is rational.  This can be used to prove
rational growth for free abelian groups and for word hyperbolic
groups.

In this paper we focus on the integer Heisenberg group
$H=H(\Z)$ and consider its growth functions with various finite
generating sets.
Shapiro 1989 \cite{shapiro1989} shows that for the standard Heisenberg generators
$S=\std$, there is no regular language of geodesics for $(H,\std)$.
Nevertheless, the growth function is rational \cite{shapiro1989,benson1987}.  In this paper, we will
show the same holds for arbitrary generating sets.

\subsection{Rational families}\label{benson-machinery}

We now review material from Max Benson's papers
\cite{benson1983,benson1987},  articulating the principle that {\em
  counting in polyhedra is rational}. Benson uses these techniques in
\cite{benson1983} to show that virtually abelian groups have rational
growth with respect to arbitrary generating set. 

Suppose we have a parameter $n$ which we will take to lie in the
non-negative integers and we consider sets of points  $E(n)
\subset \Z^d$ defined by finitely many equalities, inequalities, and
congruences
$$\begin{cases}
\A_i \cdot \X = b_i(n)  \ ;\\
\A_j \cdot \X  \le b_j(n) \ ; \\
\A_k \cdot \X \equiv b_k(n)  \pmod {c_k} \ , 
\end{cases}$$
where each $\A_i$, $\A_j$ and $\A_k$ are in $\Z^d$, and each $b_i$,
$b_j$ and $b_k$ is an affine function of $n$ with integer
coefficients.  Such a sequence of sets $\{E(n)\}$ is called an {\em elementary family}.
Benson defines a {\em polyhedral family} $\{P(n)\}$ to be a finite union
of finite intersections of elementary families.  
If each $P(n)$ is bounded, then $\{P(n)\}$ is called a {\em bounded polyhedral family}.  

\begin{lemma}
The class of polyhedral families is closed under complementation, union, intersection, and 
set difference.  
\end{lemma}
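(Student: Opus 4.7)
The plan is to establish the four closure properties in an order where each builds on the previous. \emph{Union} is immediate from the definition: a finite union of finite unions of finite intersections of elementary families is itself a finite union of finite intersections. \emph{Intersection} follows by distributivity: if $P=\bigcup_i A_i$ and $Q=\bigcup_j B_j$ with each $A_i, B_j$ a finite intersection of elementary families, then $P \cap Q = \bigcup_{i,j}(A_i \cap B_j)$, and the intersection of two elementary families is elementary (just concatenate their lists of equalities, inequalities, and congruences). Finally, \emph{set difference} reduces via $A \setminus B = A \cap B^c$, so the essential content is \emph{complementation}.

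For complementation, I would apply De Morgan: if $P = \bigcup_i \bigcap_j E_{ij}$ is polyhedral, then $P^c = \bigcap_i \bigcup_j E_{ij}^c$, a finite intersection of finite unions. Since we already know polyhedral families are closed under finite unions and finite intersections, it suffices to show that the complement of a single elementary family is polyhedral. Writing an elementary family as $E = C_1 \cap \cdots \cap C_m$ where each $C_\ell$ is the solution set of one condition of the three listed types, we have $E^c = C_1^c \cup \cdots \cup C_m^c$, and so it suffices to prove that the complement of the solution set of each individual condition is polyhedral.

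I would check the three cases separately. For an inequality $\A_j \cdot \X \le b_j(n)$, the complement in $\Z^d$ is $\A_j \cdot \X \ge b_j(n)+1$, which after negating both sides is a single inequality of the required form, hence elementary. For an equality $\A_i \cdot \X = b_i(n)$, the complement is $\{\A_i \cdot \X \le b_i(n)-1\} \cup \{\A_i \cdot \X \ge b_i(n)+1\}$, a union of two elementary families. For a congruence $\A_k \cdot \X \equiv b_k(n) \pmod{c_k}$, the complement is $\bigcup_{r=1}^{c_k-1} \{\A_k \cdot \X \equiv b_k(n) + r \pmod{c_k}\}$, a finite union of $c_k - 1$ elementary families. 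In every case the affine-in-$n$ form of the right-hand side is preserved under the shifts by integer constants, so the resulting conditions still fall within the framework.

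The main obstacle, such as it is, is the congruence case: unlike in the real setting, the ``complement of a single equation'' is not automatically a pair of strict inequalities but a disjunction over residue classes. However, since each modulus $c_k$ is a fixed integer, this disjunction is finite, and so membership in the polyhedral class is preserved. All remaining manipulations are bookkeeping with De Morgan and distributivity, and give the desired closure under complementation, after which set difference follows at once.
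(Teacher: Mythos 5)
Your proof is correct and takes essentially the same approach as the paper: union and intersection are handled by the definition and by combining defining systems, complementation reduces to the three cases of a single equality, inequality, or congruence (two inequalities, one inequality, and $c_k-1$ congruences respectively), and set difference follows from intersection and complementation. The De Morgan and distributivity bookkeeping you spell out is left implicit in the paper but matches it in substance.
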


\begin{proof}
This is clear for union, from the definition, and for intersection, by taking the combined
system of defining equalities, inequalities and congruences.

We now consider complementation.  
The complement of the solution set of an equation is the disjoint
union of the solution sets of two inequalities.  For an
inequality, the complement of its solution set is given by a single
inequality. For a congruence mod $r$,  the complement of its
solution set is the disjoint union of solutions to $r-1$
congruences. 

Finally, set difference can be built with intersection and complementation.
\end{proof}

\noindent Note also that the class of polyhedral families is closed under
affine push-forward; if $\{P(n)\}$ is a polyhedral family in $\Z^d$ and
$g:\Z^d \to \Z^m$ is an affine map, then  $\{g(P(n))\}$ is a polyhedral
family in $\Z^m$, and each is bounded if the other is.

\begin{theorem}[Counting over polyhedral families \cite{benson1983,benson1987}]\label{thm::rational-counting}
  Suppose that $\{P(n)\}$ is a bounded rational family in $\Z^d$
 and $ f:\Z^d \to \Z$ is a polynomial with integer
  coefficients.  Then
$$F(x) = \sum_{n=0}^\infty \quad \sum_{\V\in P(n)} f(\V) \, x^n $$
is a rational function of $x$.
\end{theorem}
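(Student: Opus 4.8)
The plan is to deduce rationality of $F(x)$ from the standard dictionary between power series and coefficient sequences that are \emph{eventually quasi-polynomial}: a sequence $(\phi(n))_{n\ge 0}$ has $\sum_n\phi(n)x^n$ rational if and only if there are a modulus $\rho\ge 1$, an index $n_0$, and polynomials $p_0,\dots,p_{\rho-1}$ with $\phi(n)=p_{n\bmod\rho}(n)$ for all $n\ge n_0$; the relevant ``if'' direction follows by splitting the tail of $F$ into residue classes of $n$ and using that $\sum_m m^k y^m$ is rational, with the finitely many initial terms contributing only a polynomial. So it suffices to prove that $\phi(n):=\sum_{\V\in P(n)}f(\V)$ is eventually quasi-polynomial in $n$; I will use repeatedly that a $\Z$-linear combination of such sequences is again one.

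First I would reduce to a single elementary family. By the closure lemma $\{P(n)\}$ is a finite union of intersections of elementary families, finite intersections of elementary families are elementary (concatenate the defining systems), and inclusion--exclusion writes $\phi(n)$ as a $\Z$-linear combination of the analogous sums over elementary families; so assume $\{P(n)\}$ is elementary. Next I would clear the congruences: with $c$ the least common multiple of the moduli, partition $\Z^d$ into cosets $c\Z^d+\mathbf r$ and restrict $n$ to a fixed residue class modulo $c$; on each coset, writing $\V=c\,\U+\mathbf r$, every congruence collapses to a fixed condition on $(\mathbf r,\,n\bmod c)$, $f(c\,\U+\mathbf r)$ remains an integer polynomial in $\U$, and the equalities and inequalities become equalities and inequalities in $\U$ with right-hand sides affine in $n$. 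Finally I would clear the equalities: their solution set, nonempty only for $n$ in an arithmetic progression, is an affine translate of a fixed sublattice, which I parametrize affinely by $\Z^{d'}$ via Smith normal form. Summing over the finitely many resulting pieces, we are reduced to showing that for $n$ in a fixed residue class $\sum_{\U\in P(n)}g(\U)$ is eventually quasi-polynomial, where $g$ is an integer polynomial and $P(n)=\{\U\in\Z^{d'}: A\,\U\le b_0+n\,b_1\}$ is cut out by inequalities alone and is still a bounded family.

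The heart of the matter is a weighted parametric version of Ehrhart's theorem. Since $\{P(n)\}$ is bounded, $\{\U: A\,\U\le 0\}=\{0\}$, so every vertex of $P(n)$ arises by intersecting $d'$ of the facet hyperplanes $\A_i\sdot\U=b_{0,i}+n\,b_{1,i}$; for $n$ past some $n_1$ the list of feasible selections no longer changes and each resulting vertex $v_\alpha(n)$ is an affine function of $n$ with coefficients in $\tfrac1N\Z^{d'}$ for a fixed $N$. Triangulate $P(n)$ coherently for all $n\ge n_1$ (for instance by iterated coning from one fixed vertex) into rational simplices with vertices among the $v_\alpha(n)$; by inclusion--exclusion over this triangulation it remains to treat a single rational simplex $\Delta(n)$ whose vertices are affine in $n$. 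For such a simplex, lift to the pointed simplicial cone over $(\Delta(n),1)\subset\R^{d'+1}$, whose lattice-point generating function has the form $\sum z^{\U}x^{n}=\sigma_\Delta(z,x)\big/\prod_j(1-z^{u_j}x^{h_j})$ with $\sigma_\Delta$ a finite sum over the fundamental parallelepiped; applying the operator $g(z_1\partial_{z_1},\dots,z_{d'}\partial_{z_{d'}})$ and then specializing $z\to(1,\dots,1)$ --- legitimate because, $P(n)$ being finite, each coefficient of $x^n$ is a Laurent polynomial in $z$ --- yields a rational function of $x$, hence by partial fractions a coefficient sequence eventually quasi-polynomial in $n$. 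This closes the chain of reductions.

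The step I expect to be the real obstacle is the \emph{uniformity} hidden in the previous paragraph: one must produce one triangulation, one period, and one polynomial template valid for all large $n$, which means controlling how the vertices $v_\alpha(n)$ and the fundamental-parallelepiped data move with $n$ and ruling out combinatorial jumps. This is precisely where boundedness is indispensable --- it prevents facets from escaping to infinity and keeps the relevant cones pointed, so that the generating-function manipulations and the $z\to 1$ specialization are valid. Everything else --- the inclusion--exclusion reductions, the residue-class bookkeeping, and the reassembly of $F(x)$ from its tail and finitely many initial terms --- is routine. One could alternatively quote Barvinok's theorem on parametric rational polyhedra to obtain rationality of $F(x)$ more directly; I have sketched the quasi-polynomial route because it stays closest to Benson's original arguments in \cite{benson1983,benson1987}.
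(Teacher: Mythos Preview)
The paper does not prove this theorem; it is quoted from Benson \cite{benson1983,benson1987} as a background tool, so there is no in-paper argument to compare against. Your reductions --- inclusion--exclusion down to a single elementary family, stripping the congruences by passing to residue classes, eliminating the equalities via Smith normal form, and arriving at $P(n)=\{\U:A\U\le b_0+nb_1\}$ --- are the standard ones and are essentially Benson's.

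The Ehrhart step, however, has a real gap, though one you can close with an idea you already mention. After triangulating each $P(n)$ into simplices $\Delta(n)$ with vertices $v_\alpha(n)$ affine in $n$, you ``lift to the pointed simplicial cone over $(\Delta(n),1)$'' and quote the single-cone formula $\sigma_\Delta/\prod_j(1-z^{u_j}x^{h_j})$. But when the $v_\alpha(n)$ are merely affine and not linear in $n$, the stack $\bigcup_{n\ge 0}\Delta(n)\times\{n\}\subset\R^{d'+1}$ is not a cone --- and in general is not even a polyhedron: the interior facets introduced by your triangulation are cut out by determinants in the $v_\alpha(n)$, which are honestly quadratic (or worse) in $n$. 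For instance, with $d'=2$ and vertices $(0,0),(n,0),(1,n)$, the edge opposite the origin lies on $nx+(n-1)y=n^2$, so the corresponding ``facet'' in $\R^3$ is a quadric, not a plane. The remedy is to drop the slice-by-slice triangulation entirely and work with $Q=\{(\U,n):A\U\le b_0+nb_1,\ n\ge 0\}$ itself: these defining inequalities \emph{are} linear in $(\U,n)$, so $Q$ is a rational polyhedron in $\R^{d'+1}$, and boundedness of each $P(n)$ gives $\{\U:A\U\le 0\}=\{0\}$ and hence that $Q$ is pointed. Its lattice-point series $\sum_{(\U,n)\in Q\cap\Z^{d'+1}}z^{\U}x^n$ is then rational by the usual Brion/Stanley theory; your differential operator preserves rationality, and the specialization $z\to 1$ is legitimate for the reason you give (indeed holomorphic, since $P(n)\subset[-Cn,Cn]^{d'}$ makes the double series converge for $z$ near $1$ and $|x|$ small). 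With that single adjustment the sketch goes through.
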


\subsection{The Heisenberg groups}\label{sec::higherH}

Most of this paper will focus on the Heisenberg group $H(\Z)$, which
is also the first in the family $H_k$, $k=3,5,7,\dots$ of two-step
nilpotent groups realized as

$$
\begin{tikzpicture}[scale=1/3]

\draw (0,0) rectangle (10,10);
\foreach \x in {0,...,9}
{ \draw [gray] (\x,9-\x) rectangle  (\x+1,10-\x);}
\node at (.5,9.5) {$1$};
\node at (1.5,8.5) {$1$};
\filldraw (4.5,5.5) circle (0.02) (4.3,5.7) circle (0.02) (4.7,5.3) circle (0.02)  ;
\filldraw (4.5,9.5) circle (0.02) (4.3,9.5) circle (0.02) (4.7,9.5) circle (0.02)  ;
\filldraw (9.5,5.5) circle (0.02) (9.5,5.7) circle (0.02) (9.5,5.3) circle (0.02)  ;
\node at (9.5,.5) {$1$};
\node at (8.5,1.5) {$1$};
\node at (1.5,9.5) {$\Z$};
\node at (2.5,9.5) {$\Z$};
\node at (9.5,9.5) {$\Z$};
\node at (9.5,1.5) {$\Z$};
\node at (9.5,2.5) {$\Z$};

\draw [gray] (2,9)--(9,9) (9,2)--(9,9);
\node at (3,3) {$0$};
\node at (7,7) {$0$};
\end{tikzpicture}
$$
inside the $N\times N$ matrices, where
$N=\frac{k+3}2$.  (This
parametrization has $k$ as the number of integer parameters in each matrix.)  For
$i=1,2,\dots,N-2$, let $a_i$ be the $(1,i+1)$ elementary
matrix, let $b_i$ be the $(N,i+1)$ elementary matrix, 
and write $c$ for top-right elementary
matrix.  Then we have the commutator relations $[a_i,b_i]=c$ and all
other commutators are trivial.  Thus for any $k$, the commutator
subgroup is $\langle c\rangle$, so that the lower central series is
$$1 \trianglelefteq \Z \trianglelefteq H_k.$$ 

The well-known Bass-Guivarc'h formula for
the degree of polynomial growth in nilpotent groups tells us that 
the growth function of $H_k$ is $\beta(n)
\asymp n^d$ for $d=(k-1)\sdot 1 + 1\sdot 2=k+1$.

For the Heisenberg group $H(\Z)$, we will drop the subscripts and
write the elementary matrices as $\E_1,\E_2,\E_3$, so that
$[\E_1,\E_2]=\E_3$ and $[\E_1^m,\E_2^n]=\E_3^{mn}$.  The standard
generating set for $H(\Z)$ is $\{\E_1,\E_2\}^{\pm 1}$, and from the
above formula we know that the growth function in these generators is
bounded above and below by fourth-degree polynomials.

\subsection{Geometric model, spelling paths, and boost}

We will use the {\em exponential coordinates} on $H(\Z)\le H(\R)$ given by the following representation:
$$(a,b,c) \leftrightarrow \begin{pmatrix}
	1&a&c + \frac 12 ab  \\
	0 &1&b\\
	0&0&1
 \end{pmatrix} .$$
These coordinates have the property that $(a,b,c)^n=(na,nb,nc)$, and in 
this notation $\E_1=(1,0,0)$, $\E_2=(0,1,0)$, and $\E_3=(0,0,1)$.

For integers $a$ and $b$, define $\epsilon(a,b)$ to be $1/2$ if $a$
and $b$ are both odd, and $0$ otherwise.  In these coordinates,
$H(\Z)$ looks just like the standard lattice $\Z^3\subset \R^3$
shifted by $\epsilon$ in the $z$ direction, and the Haar measure on
$H(\R)$ is identified with Lebesgue measure in $\R^3$.

\begin{definition}
A {\em spelling path} is a string of letters from $S$, i.e., an
element of $S^*$, regarded as a path in the Cayley graph that
represents a group element from $H(\Z)$.  Define $\ell$, $(a,b)$, and
$z$ to be the {\em length}, {\em horizontal position}, and {\em
  height} of $\gamma$, respectively: if the group element represented
by $\gamma$ is $(a,b,c)$, then $\ell(\gamma)$ is the spelling length
of the string, $(a,b)\in \M$ is the projection of the endpoint, and
the {\em height} of the group element and hence the path is
$z(\gamma)=c$.  Also let the {\em shadow}, denoted $\pi(\gamma)$, be
the projection to $\M$ (the path in $\M$ obtained by concatenating the
projections of the generators to $\M$ in the order of appearance in
$\gamma$).

Define the area of a spelling path $\gamma$, denoted $z_A(\gamma)$, to
be the balayage area of its projection, that is, the signed area of
the concatenation of $\pi(\gamma)$ with the chord between its endpoint
and 0.  The {\em boost} of a generating letter $a_i$ is its height
$z(a_i)$.  Then the boost of a spelling, denoted $z_b(\gamma)$, is the
sum of the boosts of the letters in the spelling.
\end{definition}

Note that the height of a spelling path is equal to its balayage area plus its boost:
$z(\gamma)=z_b(\gamma)+z_A(\gamma)$.

\subsection{\CC~metrics and Pansu's theorem}

As mentioned above,  Pansu's theorem  states 
that the large-scale structure of the Cayley graph $(H,S)$  is a 
metric on $H(\R)$.  
It is not a Riemannian metric, but rather a
{\em sub-Finsler metric} called a \CC metric.  See \cite{breuillard,dm} for
some explicit descriptions of the geometry of the limit metric, and
\cite{capogna} for general background on sub-Riemannian geometry and
the Heisenberg group.  We collect a few salient features here.

The \CC metrics are defined as follows.
Let $\M$ denote the horizontal subspace of the Lie algebra $\uglyH$ of
$H(\R)$; that is, the span of the tangent vectors
$X=\left( \begin{smallmatrix} 0&1 &0 \\ 0 &0&0\\ 0&0&0
 \end{smallmatrix} \right)$
and 
$Y=\left( \begin{smallmatrix}
	0&0&0  \\
	0 &0&1\\
	0&0&0
 \end{smallmatrix} \right)$, and identify $\M$ with the $xy$--plane in $\R^3$ in exponential coordinates.  
 We can regard
$\M$ as a copy of $\R^2$ and make use of the linear projection $\pi:
H(\R) \to \M$ given by $(a,b,c)\mapsto (a,b)$.

 Fix a centrally symmetric convex polygon $L\subset \M$; this uniquely
 defines a norm ${\normL\cdot}$ on $\M$ for which $L$ is the unit
 sphere.  The push-forwards of $\M$ by left multiplication give {\em
   admissible planes} at every point in $H(\R)$, which are similarly
 normed; the plane field is a sub-bundle of the tangent bundle to
 $H(\R)$.  We say that a curve in $H(\R)$ is {\em admissible} if
it is piecewise differentiable and all of its tangent
 vectors lie in these normed planes.  The length of an admissible
 curve is simply the integral of the lengths of its tangent vectors,
 and it is easily verified that this is the same as the length in the
 $L$--norm of the projection $\pi(\gamma)$, and that any two points
 are connected by an admissible path.  Then the \CC distance
 $\dCC(x,y)$ is (well-)defined as the infimal length of an admissible
 path between $x$ and $y$.

In exponential coordinates, all \CC metrics are equipped with a
dilation $\delta_t(a,b,c)=(ta,tb,t^2c)$ that is a metric similarity,
scaling lengths and distances by $t$, areas in $\M$ by $t^2$, and
volumes by $t^4$.

Pansu also tells us which polygon $L$ is induced by a generating set
$S$: namely, $L$ is the boundary of the convex hull of the projection
$\pi(S)$ of the generators to $\M$.  For example, the two most basic
generating sets for $H(\Z)$ are $\{\E_1,\E_2\}^\pm$ and
$\{\E_1,\E_2,\E_3\}^\pm$.  In either case, the \CC metric is induced
by the $L^1$ norm on $\M$.  By contrast, if one took the nonstandard
generators $\{\E_1,\E_2,\E_1\E_2\}^\pm$, the polygon $L$ would be a
hexagon.

In this language, we can state this special case of Pansu's theorem as follows:  
for any finite symmetric generating set $S$ of $H(\Z)$,
$$\lim_{x\to\infty} \frac{\dCC(x,0)}{|x|_S}\to 1.$$ 

While Pansu's result extends to a statement for all nilpotent groups,
there is a substantial strengthening due to Krat \cite{krat} which was shown
only in the case of $H(\Z)$:  there is a global bound (depending on $S$)
in the additive difference between word and \CC lengths:
$\sup \Bigl| \dCC(x,0) - |x|_S \Bigr| <\infty$.   In Section~\ref{sec::bddDiff},
we will give a new proof of Krat's (and therefore Pansu's) result for $H(\Z)$.
We note that Breuillard (\cite{breuillard}) has shown that bounded difference
does {\em not} hold for all 2-step nilpotent groups, though on the other hand 
he has explained to us that 
arguments from \cite{breuillard-ledonne} can be adapted to show 
bounded difference for all of the higher Heisenberg groups.

\subsection{Significant directions, isoperimetrices, structure of \CC geodesics}\label{sec::CC}

It is a standard fact in Heisenberg geometry that for any admissible
path $\gamma$ based at the origin $0\in H(\R)$, the height or $z$
coordinate of $\gamma(t)$ is equal to its {\em balayage area}: the
signed (Lebesgue) area enclosed by the concatenation of the curve's
shadow $\pi(\gamma)$ with a straight line segment connecting its
endpoints.

As a consequence of the connection between height and balayage area,
we have a criterion for geodesity in the \CC metric: a curve $\gamma$
in $\M$ based at $(0,0)$ lifts to a geodesic in $H(\R)$ iff its
$L$-length is minimal among all curves with the same endpoints and
enclosing the same area.  As a result, to classify geodesics one uses the solution to the
isoperimetric problem in the normed plane $(\M,\normL\cdot)$.  By a
classical theorem of Busemann from 1947 \cite{busemann}, the
solution is described in terms of a polygon which he called the {\em
  isoperimetrix}.

\begin{definition}
For a finite symmetric generating set $S$, let $Q=\CHull(\pi(S))$ be
the convex hull of the projection of $S$ to $\M$ and let $L$ be its
boundary polygon, as above.  The {\em polar dual} of $Q$ is defined as
$Q^*=\{v\in\M : v\cdot x \le 1 \quad \forall x\in Q\}$ with respect to
the standard dot product.  Busemann's {\em isoperimetrix} is the polygon
$\partial(e^{i\pi/2}Q^*)$, obtained by rotating the polar dual of $Q$
through a right angle.  
\end{definition}

\begin{definition}
The vertices of the polygon $L$ will be labelled cyclically as
$\A_1,\ldots,\A_{\sides}$ and these vectors will be called {\em
  significant directions}.  
For the significant directions, we will extend the subscripts
periodically by defining $\A_{m}$ to equal $\A_{n}$ if $m\equiv n
\pmod{\sides }$.
  
  Each significant direction is the shadow
of at least one {\em significant generator} in $S$ and we will label
the generators projecting to $\A_i$ as $a_i,a_i',a_i''$, etc.
Elements of $S$ which project to the edges of $L$ are called {\em edge
  letters} and those that project properly inside $L$ are called
{\em interior letters}.
\end{definition}

\begin{remark} We will  maintain this font
distinction as much as possible to mark the difference between group elements $a\in H$ 
and their corresponding projections $\A\in\M$, the latter thought of 
as vectors in the plane.
\end{remark}

With this terminology, Busemann's theorem can be stated as follows.
Use Lebesgue measure on $\R^2$ for area and  length in
the Minkowski norm for perimeter.  Then up to dilation and translation
{\em the isoperimetrix is the unique closed curve realizing the
  maximal value of area divided by perimeter-squared}.  The following
properties follow from Busemann's construction.
\begin{itemize}
\item If the vertices of $Q$ have rational coordinates, then the same
  is true of the vertices of the isoperimetrix.
\item The edges of the isoperimetrix are parallel to the significant directions.
\end{itemize}
It follows that by clearing common denominators we can find positive 
integers 
$\sigma_1,\dots,\sigma_\sides$ with $\gcd=1$
and an integer $\lambda$
such that the edge vectors of  
$\lambda\partial(e^{i\pi/2}Q^*)$
are $\sigma_i \A_i$.

\begin{definition}
Define the {\em standard isoperimetrix}  to be the closed polygon 
 $\isoper=\isoper(S)$ having vertices 
 $$\zero, \quad  \sigma_1\A_1, \quad \sigma_1\A_1+\sigma_2\A_2, \quad  \dots$$
 (This is a translated and scaled copy of Busemann's curve.)
 \end{definition}

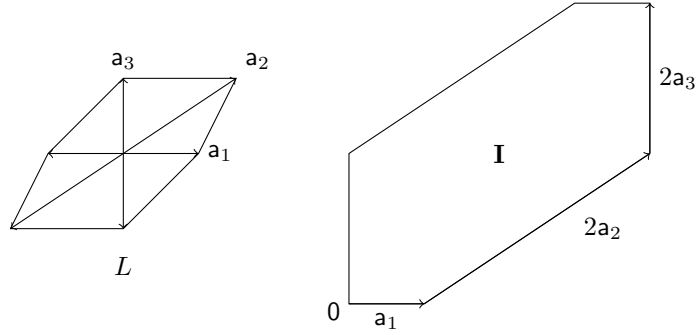
\begin{figure}[ht]
\begin{tikzpicture}
\draw (1,0)--(3/2,1)--(0,1)--(-1,0)--(-3/2,-1)--(0,-1)--cycle;
\draw [->] (0,0)-- (1,0) node [right] {$\A_1$};
\draw [->] (0,0)-- (3/2,1) node [above right] {$\A_2$};
\draw [->] (0,0)-- (0,1) node [above] {$\A_3$};
\draw [->] (0,0)-- (-1,0) ;
\draw [->] (0,0)-- (-3/2,-1);
\draw [->] (0,0)-- (0,-1) ;
\node at (0,-1.5) {$L$};
\begin{scope}[xshift=5cm,scale=2]
\node at (0,0) {$\isoper$};
\draw (1,1)--(1/2,1)--(-1,0)--(-1,-1)--(-1/2,-1)--(1,0)--cycle;
\draw [->] (-1,-1)--(-1/2,-1);
\node at (-1.1,-1.05)  {$\zero$};
\node at (-3/4,-1) [below] {$\A_1$};
\draw [->] (-1/2,-1)--(1,0);
\node at (1/2,-1/2) [right] {$2\A_2$};
\draw [->] (1,0)--(1,1);
\node at (1,1/2) [right] {$2\A_3$};
\end{scope}
\end{tikzpicture}
\caption{This example shows an isoperimetrix which is twice the rotated polar dual of the original polygon.
See \cite{busemann}.}
\end{figure}

\CC geodesics based at $\zero$ are classified in \cite{dm} into two
kinds: {\em regular geodesics}, which project to $\M$ as an arc of an
isoperimetrix, and {\em unstable geodesics}, which project to $\M$ as
geodesic in the $L$--norm.  

Fix  a polygon $L$ in $\M$, which determines a \CC metric on $H(\R)$.
Then for any $(a,b)\in \M$, 
 each length $\ell\ge \normL{(a,b)}$ uniquely determines a height $c=c(a,b,\ell)\ge 0$
so that there exists a regular geodesic  connecting $\zero$ to $(a,b,c)$ 
at length $\ell$.   That is, for each $\ell$ there exists a scale $s$ and 
a translation vector ${\sf q}$ so that 
$s\isoper+{\sf q}$  passes through $\zero$ and $(a,b)$; the subarc between
those two points has length $\ell$ with respect to $L$ and encloses area $c$, and it lifts 
to a \CC geodesic.  On the other hand if $\ell=\normL{(a,b)}$, there are 
$L$--norm geodesics connecting $\zero$ to $(a,b)$ with length $\ell$, and these can 
enclose any area in an interval of possibilities.

\begin{lemma}[Types of \CC geodesics \cite{dm}]\label{cc-types}
Let $c_0(a,b)=c(a,b,\normL{(a,b)})$ be the first height reached by a regular geodesic.
Then $(a,b,c_0)$ is reached by a geodesic that is both regular and unstable, 
and the same is true for $(a,b,-c_0)$.
Any point $(a,b,t)\in H(\R)$  is reached by only unstable 
geodesics if $|t|\le c_0$, and by only regular geodesics if $|t|\ge c_0$.

Furthermore, if $(a,b)$ is not in a significant direction then there are infinitely many 
unstable geodesics in the first case and a uniquely determined regular geodesic in the second.
\end{lemma}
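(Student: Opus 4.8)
The plan is to reduce the entire statement to the classical solution of the isoperimetric problem in the normed plane $(\M,\normL\cdot)$, which by Busemann's theorem is governed by the isoperimetrix $\isoper$. The key fact I would invoke, stated earlier in the excerpt, is the balayage-area criterion for \CC geodesity: a curve $\gamma$ in $\M$ from $(0,0)$ to $(a,b)$ lifts to a \CC geodesic in $H(\R)$ ending at $(a,b,c)$ if and only if its $L$-length is minimal among all curves with those endpoints enclosing balayage area $c$. So the whole problem becomes a planar one: for fixed $(a,b)$, understand the function $\ell\mapsto c(a,b,\ell)$ recording the maximal (balayage) area enclosable by an $L$-path of length $\ell$ from $\zero$ to $(a,b)$, and identify the minimizing curves.

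First I would set up the planar optimization. For a fixed endpoint $(a,b)$ and fixed length $\ell\ge\normL{(a,b)}$, the Busemann/Dido solution says the area-maximizing $L$-path consists of an arc of a scaled isoperimetrix $s\isoper+{\sf q}$; this is exactly the content invoked in the paragraph preceding the lemma, namely that such an arc through $\zero$ and $(a,b)$ of $L$-length $\ell$ exists and is essentially unique, enclosing a well-defined area $c(a,b,\ell)$. I would note that $c(a,b,\ell)$ is continuous and strictly increasing in $\ell$ on $[\normL{(a,b)},\infty)$: lengthening the allowed perimeter strictly enlarges the maximal enclosable area, because one can always append a small ``bump'' increasing area. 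Define $c_0(a,b)=c(a,b,\normL{(a,b)})$ to be the value at the minimal length.

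Next I would analyze the three regimes for a target height $t$. If $|t|>c_0$, then no $L$-geodesic of $(a,b)$ (length exactly $\normL{(a,b)}$) can enclose area $t$, so any admissible lift realizing $(a,b,t)$ must use length $>\normL{(a,b)}$ and hence project to a proper isoperimetric arc — a regular geodesic — and by monotonicity and uniqueness of $c(a,b,\cdot)$ there is exactly the length $\ell$ with $c(a,b,\ell)=|t|$, giving a single such arc (up to the reflection handling $\pm t$). If $|t|<c_0$, then the $L$-geodesics of $(a,b)$ already sweep out an interval of enclosed areas containing $t$ — here I would use that when $(a,b)$ is not a significant direction there is a genuine family of $L$-geodesics (one can shuffle the order of the significant-direction segments whose convex combination gives $(a,b)$, and each reordering, plus small perturbations, changes the enclosed area continuously through an interval symmetric about $0$ of half-width $c_0$) — so $(a,b,t)$ is reached by unstable geodesics only, and infinitely many of them. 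The boundary case $|t|=c_0$ is where the two descriptions meet: the extremal $L$-geodesic is simultaneously an $L$-geodesic (unstable) and the degenerate ``arc'' of the isoperimetrix at scale forcing length $\normL{(a,b)}$ (regular), which I would check is the curve running along the isoperimetrix boundary between two of its vertices; the $\pm$ symmetry is the reflection $(a,b,c)\mapsto(a,b,-c)$, equivalently reversing orientation of the planar path.

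The main obstacle I anticipate is the careful bookkeeping at $|t|=c_0$ and the ``infinitely many unstable geodesics'' claim: I need that the area enclosed by $L$-geodesics of a non-significant $(a,b)$ fills the full closed interval $[-c_0,c_0]$ and that every interior value is attained by infinitely many distinct such geodesics, while the endpoints are attained by a curve that is also a limit of isoperimetric arcs, so the classification is exhaustive and the ``regular $\cap$ unstable'' overlap is exactly $\{|t|=c_0\}$. This requires knowing the structure of $L$-geodesics in a polygonal norm — they are precisely the paths whose edges use only significant directions $\A_i$ appearing in a fixed minimal expression of $(a,b)$ — and then a convexity/continuity argument on the balayage area as these edges are permuted and slightly bent. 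Everything else (monotonicity of $c(a,b,\cdot)$, uniqueness of the regular arc through two points at a given length, the reflection symmetry) is routine given Busemann's theorem and the area criterion.
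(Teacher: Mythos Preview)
The paper does not prove this lemma; it is stated with a citation to \cite{dm} (Duchin--Mooney) and no argument is supplied, so there is no in-paper proof to compare your proposal against.

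That said, your sketch is a sound reconstruction of the standard argument: the balayage-area reduction to a planar Dido problem, the monotonicity of $c(a,b,\ell)$ in $\ell$, and the trichotomy on $|t|$ relative to $c_0$ are the right moves. The one point worth tightening is the identification of $c_0$ with the extremal balayage area among $L$--geodesics from $\zero$ to $(a,b)$: you should note explicitly that the regular arc of $L$--length exactly $\normL{(a,b)}$ is itself an $L$--geodesic (being length-minimizing to its endpoint) and, by the isoperimetric property of $\isoper$, encloses the maximal area among all such geodesics---this is what pins the boundary between the two regimes at $|t|=c_0$ exactly and makes that boundary point simultaneously regular and unstable. Your anticipated obstacle about filling $[-c_0,c_0]$ and obtaining infinitely many unstable geodesics at interior values is correctly handled by the connectedness of the family of $L$--geodesics for non-significant $(a,b)$ and the continuity of balayage area over it.
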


Recall that ``stability of geodesics" means that for each pair of endpoints,
geodesics between the endpoints fellow travel with some fixed constant
(as in the Morse Lemma in hyperbolic geometry).  
Here, \CC geodesics of the second type
are called ``unstable" because they are highly non-unique and can fail 
to fellow travel arbitrarily badly.

\section{Euclidean geometry lemmas}\label{sec::Euc}

Below, we will use multiplicative vector notation for polygonal paths
in the plane, so that for instance $\A_1^t\A_2\A_1$ denotes the
concatenated path obtained by starting with the vector $t\A_1$
followed by the vector $\A_2$ followed by the vector $\A_1$, ending at
the point $(t+1)\A_1+\A_2$ in the plane.  Note that in this path
notation, the exponents need not be integers.

Any closed polygon in the plane with a vertex at the origin is traced
out by a path $P=\V_1\V_2\dots \V_r$ with $\sum \V_i=\zero$.  The
polygon is convex if and only if the vectors $\V_1,\dots,\V_r$ are
cyclically ordered (that is, if their arguments proceed in a monotone
fashion around the circle).  In this notation, the standard
isoperimetrix described above can be written $\isoper=\A_1^{\sigma_1}
\cdots \A_\sides^{\sigma_\sides}$.  For each $1\le i \le \sides$,
define $\aa_i=a_i^{\sigma_i}$ and $\AA_i=\A_i^{\sigma_i}$ so that
$\isoper=\AA_1 \cdots \AA_\sides$.  We refer to these as {\em blocks
  of significant letters} or more simply {\em significant blocks},
relying on context to distinguish between $\aa_i$ and $\AA_i$.
Note that we are treating $\aa_i$ as an element of $S^{\sigma_i}$, not a weighted
  generator added to the generating set $S$.

For any closed convex polygon $P$ in the plane, we consider the family
of polygons with the same ordered set of interior angles, i.e., the
family obtained by moving the sides of $P$ parallel to themselves.
Let us call this the {\em parallel family} of $P$. if $P=\V_1\V_2\dots
\V_r$, then an element of the parallel family is of the form
$P_\S=\V_1^{s_1}\V_2^{s_2} \dots \V_r^{s_r}$ for some
$\S=(s_1,\dots,s_r)\in \R^r$, and it is closed and convex if all
$s_i\ge 0$ and $\sum s_i\V_i=\zero$.

\begin{lemma}[Isoperimetric problem in parallel
    families]\label{lem::parallel-fam} 
Let $P=\V_1\V_2 \dots \V_r$ be a closed convex polygon in the plane.
For fixed arbitrary positive numbers
$\ell_1,\dots,\ell_r$ and for any $\lambda>0$, let
$$M(\lambda):=\{ \S\in [0,\infty)^r : \sum s_i\V_i=\zero, \quad
  \sum s_i\ell_i=\lambda \}.$$ Then the function $\Area(P_\S)$ has a
  unique local maximum on $M(\lambda)$.
\end{lemma}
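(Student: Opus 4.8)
The plan is to parametrize $M(\lambda)$ and show that $\Area(P_\S)$ is a strictly concave function on this set, from which uniqueness of a local maximum is immediate. First I would observe that $M(\lambda)$ is a compact convex polytope: it is the intersection of the affine subspace cut out by the three linear equations $\sum s_i \V_i = \zero$ and $\sum s_i \ell_i = \lambda$ with the positive orthant $[0,\infty)^r$, and compactness follows because the $\V_i$ cyclically span the plane (so the linear constraints, together with $s_i \ge 0$, bound each $s_i$). Having fixed this, everything reduces to understanding $\Area(P_\S)$ as a function of $\S$.

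The key computation is that $\Area(P_\S)$ is a quadratic form in $\S$. Writing $P_\S = \V_1^{s_1}\cdots\V_r^{s_r}$, the vertices of $P_\S$ are the partial sums $\zero, s_1\V_1, s_1\V_1 + s_2\V_2, \dots$, and the signed area is given by the shoelace formula applied to these vertices. This yields $\Area(P_\S) = \frac 12 \sum_{i<j} s_i s_j \, (\V_i \times \V_j)$, where $\V_i \times \V_j$ denotes the scalar cross product; there are no diagonal $s_i^2$ terms. Because $P$ is convex and the $\V_i$ are cyclically ordered, every cross product $\V_i \times \V_j$ with $i < j$ is nonnegative (this is exactly the statement that the $\V_i$ turn consistently one way). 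The next step is to show this quadratic form is \emph{strictly} concave on the affine subspace containing $M(\lambda)$ — equivalently, that its Hessian, restricted to the linear subspace $W = \{ \S : \sum s_i \V_i = \zero, \ \sum s_i \ell_i = 0 \}$, is negative definite. The Hessian of $\sum_{i<j} s_i s_j (\V_i \times \V_j)$ is the symmetric matrix $A$ with $A_{ij} = \V_i \times \V_j$ for $i \ne j$ and $A_{ii} = 0$, scaled by $\tfrac 12$.

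The main obstacle, and the heart of the argument, is verifying that this Hessian is negative definite on $W$. The cleanest route I anticipate is to exploit that the area form, restricted to closed polygons (those with $\sum s_i \V_i = \zero$), is the classical isoperimetric functional whose strict concavity is essentially Busemann's theorem in the parallel family. Concretely: on the subspace $\{\sum s_i\V_i = \zero\}$ one can rewrite $\Area$ using the balayage/Green's-theorem expression $\Area(P_\S) = \tfrac 12 \oint x\,dy - y\,dx$, and for a closed convex polygon moving its sides parallel to themselves corresponds to the support-function description $h(\theta_i) = $ signed distance, with area a quadratic in the $h$-values whose relevant Hessian is the matrix $(\cot\alpha_i + \cot\alpha_{i+1})$-type tridiagonal object arising in mixed-volume inequalities; the Alexandrov–Fenchel / Minkowski inequality for planar convex bodies says precisely that this form has a one-dimensional non-negative-definite kernel (the homothety direction) and is negative definite on its complement. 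Intersecting further with the hyperplane $\sum s_i \ell_i = 0$, which is transverse to the homothety direction since all $\ell_i > 0$, kills the kernel and leaves a negative-definite form on $W$. As an alternative, more self-contained route, if invoking Minkowski's inequality feels heavy I would instead argue directly: on $M(\lambda)$ the continuous function $\Area$ attains a maximum (compactness), and at any critical point the first-order conditions together with the explicit bilinear form let me compute the second variation along any admissible direction $\S' \in W$ and check it is strictly negative using $\sum s_i \ell_i = 0$ and positivity of the $\V_i \times \V_j$; the sign chase here is the routine-but-delicate part. Either way, once strict concavity on $M(\lambda)$ is established, a strictly concave function on a convex set has at most one local maximum, completing the proof.
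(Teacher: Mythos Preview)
Your approach matches the paper's: establish that $\Area(P_\S)$ is quadratic in $\S$ and then negative definite on $M(\lambda)$, so that strict concavity forces a unique local maximum. The paper's own proof consists of exactly those two assertions, with the negative-definiteness left as ``one checks''; your elaboration via the shoelace formula and the Lorentzian-signature / Minkowski-inequality picture supplies strictly more detail than the paper does---the one place to tighten is that on the closure subspace the homothety direction is a \emph{positive} eigendirection (not a kernel), and in Lorentzian signature mere transversality of the hyperplane $\{\sum s_i\ell_i=0\}$ to a timelike direction does not automatically make the restriction negative definite, so you should invoke the stronger fact that this hyperplane meets the timelike cone only at the origin (which does follow from $\ell_i>0$).
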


\begin{proof}
$\Area(P_\S)$ varies quadratically over $\R^r$, and therefore also
  over the convex polytope $M(\lambda)$.  From the form of 
  $\Area(P_\S)$, one checks that it is a negative-definite quadratic form.
\end{proof}

This simple observation says that for an arbitrary convex polygon in
an arbitrary normed plane, the parallel family contains a unique set
of best proportions to maximize area relative to perimeter, by taking
the $\ell_i$ to be the lengths of the sides.  
(This is
slightly more general than what is implied by Busemann's theorem,
which for a given norm only treats the polygons in the parallel family of the
isoperimetrix.)

\begin{definition}
For a convex closed $P=\V_1\V_2 \dots \V_{r-1}\V_r$ as above, let the
indices be considered cyclically.  A $P$--{\em arc} $\tau$ of {\em
  scale} $s$ is a path $\V_\II^{s^-}\V_{\II+1}^s \dots
\V_{\JJ-1}^s\V_\JJ^{s^+}$ where $0\le s^-,s^+\le s$, and its {\em
  combinatorial length} is the sum of the exponents, 
  $\ell(\tau)=s^-+(\JJ-\II-1)s+s^+$.  Note
that $\tau$ begins at the origin and lies on a scaled and translated
copy of $P$, i.e., $\tau\subset sP+{\sf r}$.  
The {\em combinatorial type} of $\tau$
is the pair $(\II,\JJ)$ of starting and ending sides.  
There are two possible ambiguities:
first, if $\tau$ is one- or
two-sided, the scale is underdetermined, so we take $s$ to be the
maximum of $s^-$ and $s^+$.  Second, if $s^-$ or $s^+$ equals $0$
or $s$, then the arc is of more than one combinatorial type;
for instance, $\V_3^{100}\V_4^{100}\V_5^{100}$ is of types
$(2,5)$, $(2,6)$, $(3,5)$, and $(3,6)$.  

Given $K>0$, we
say that the arc {\em $K$--almost} has combinatorial type $(\II,\JJ)$
if it can be modified to an arc of combinatorial type $(\II,\JJ)$ by
modifying $s^+$ and $s^-$ by at most $K$ (possibly making them equal 
either $0$ or $s$ to change type).  
If there is $(\II,\JJ)$ so that $\tau$ and $\tau'$ both $K$--almost have
combinatorial type $(\II,\JJ)$, we say that they {\em
  $K$--almost have the same combinatorial type}.
  
In the special case of a $P$--arc $\tau=\V_\II^{s^-}\V_{\II+1}^s \dots
\V_{\II-1}^s\V_{\II}^{s^+}$ of type $(\II,\II)$, 
a {\em weight-shifted arc} is 
any $\hat \tau = \V_\II^{t^-}\V_{\II+1}^s \dots
\V_{\II-1}^s\V_{\II}^{t^+}$ where $t^-+t^+=s^-+s^+$.  Note that the family of weight-shifted
arcs for $\tau$ all reach the same endpoint and enclose the same area.
In the special case of a $P$--arc $\tau=\V_\II^{s^-}\V_{\II+1}^s \dots
\V_{\II-2}^s\V_{\II-1}^{s^+}$ of type $(\II,\II-1)$, 
a {\em cyclic permutation} of $\tau$ is $\bar\tau=\V_\JJ^{s}\V_{\JJ+1}^s \dots 
\V_{\II-2}^s\V_{\II-1}^{s^+}\V_\II^{s^-}\V_{\II+1}^s\dots 
\V_{\JJ-1}^s$ for any $\JJ$.  
Note that each cyclic permutation of $\tau$ reaches
the same endpoint.
  \end{definition}

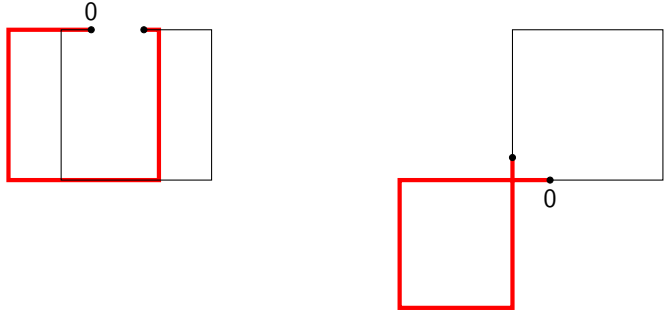
\begin{figure}[ht]
\begin{tikzpicture}[scale=2]
\draw [red, ultra thick] (0.2,1)--(-.35,1)--(-.35,0)--(.65,0)--(.65,1)--(.55,1);
\draw (0.2,1)--(0,1)--(0,0)--(1,0)--(1,1)--(.55,1);
\filldraw (0.2,1) circle (.02) node [above] {$\zero$};
\filldraw (0.55,1) circle (.02);

\begin{scope}[xshift=3cm]
\draw [red, ultra thick] (.25,0)--(-.75,0)--(-.75,-.85)--(0,-.85)--(0,.15);
\draw (.25,0)--(1,0)--(1,1)--(0,1)--(0,.15);
\filldraw (0.25,0) circle (.02) node [below] {$\zero$};
\filldraw (0,.15) circle (.02);
\end{scope}

\end{tikzpicture}
\caption{Shifting weight (left) and cyclically permuting (right).}
\end{figure}

\begin{lemma}[Combinatorial types of nearby arcs]\label{lem::comb-types}
Fix $K_1$ and $K_2$.  Then there are $K_3$, $K_4$ with the
following property.  If $\tau$ and $\tau'$ are $P$--arcs (based at the origin) whose
combinatorial lengths are within $K_1$ 
and whose endpoints are
within distance $K_2$, then their scales differ by at most $K_3$.
Further, after possible weight-shifting and cyclic permutation, they $K_4$--fellow travel.
\end{lemma}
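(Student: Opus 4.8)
The plan is to reduce everything to the geometry of the parallel family of $P$, where Lemma~\ref{lem::parallel-fam} already controls area as a function of side-lengths, and then to translate ``endpoints close, combinatorial lengths close'' into ``side-length vectors close.'' First I would normalize: given a $P$-arc $\tau = \V_\II^{s^-}\V_{\II+1}^s\cdots\V_{\JJ-1}^s\V_\JJ^{s^+}$, record the full vector $\S(\tau)=(s_1,\dots,s_r)\in[0,\infty)^r$ of exponents it assigns to the sides of $P$ (so $s_i=s$ for strictly interior indices, the boundary entries are $s^-,s^+\in[0,s]$, and $s_i=0$ off the support). The endpoint is $\sum s_i\V_i$ and the combinatorial length is $\sum s_i$ (after rescaling lengths $\ell_i$, which changes nothing up to constants). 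The key linear-algebra input is that on the relevant slab — fixing the combinatorial length to $\lambda$ and the endpoint to $\Vv$ — the solution set is a single point once the combinatorial type is fixed, because the sides $\V_i$ used span $\R^2$ and together with the length functional give a system whose solution, after the ``plateau'' value $s$ is pinned down, is rigid. So I would argue: the plateau value $s$ is essentially determined by $\lambda$ and the endpoint up to bounded error (this gives $K_3$), and then once $s$ is known the two remaining degrees of freedom $s^-,s^+$ are determined by the endpoint up to bounded error.

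Here is how I would carry out the steps in order. (1) \emph{Same combinatorial type, modulo bounded perturbation.} Two $P$-arcs with combinatorial lengths within $K_1$ and endpoints within $K_2$ must have supports that overlap in all but boundedly many sides: if one arc wraps much further around $P$ than the other, either its length is forced to be much larger (since each extra full side contributes a fixed positive amount to $\sum s_i$) or — in the degenerate one/two-sided case — a short computation bounds the discrepancy; weight-shifting and cyclic permutation are exactly the moves that absorb the harmless ambiguities flagged in the definition, so after applying them I may assume the two arcs $K$-almost have the same combinatorial type $(\II,\JJ)$ for a $K$ depending only on $K_1,K_2$. (2) \emph{Scales differ by $O(1)$.} With a common combinatorial type, write $\ell(\tau) = s^- + (\JJ-\II-1)s + s^+$ with $s^-,s^+\in[0,s]$; since $\JJ-\II-1$ is bounded (at most $r$) and $s^\pm\le s$, the combinatorial length and the scale are comparable, $\tfrac{1}{r}\ell(\tau)\le s\le \ell(\tau)$, unless $\JJ-\II-1$ is very small — but in that few-sided case the endpoint itself, projected onto the directions $\V_\II,\V_\JJ$, pins $s^-$ and $s^+$ and hence $s$ up to an additive constant depending on $K_2$ and the geometry of $P$ (the relevant Gram-type determinant is bounded below because consecutive sides of a convex polygon are non-parallel). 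Combining the two regimes yields $K_3$. (3) \emph{Fellow-traveling.} Now $\tau$ and $\tau'$ share combinatorial type $(\II,\JJ)$, their scales are within $K_3$, and their endpoints within $K_2$; their exponent vectors $\S,\S'$ agree on all but the $\le 3$ coordinates $\{s^-, s^+\}$ (and the discrepancy in the common plateau value, at most $K_3$). Hence $\|\S-\S'\|_1 \le K_3 + (\text{bounded correction from } s^\pm)$. A $P$-arc traced from the origin is a Lipschitz function of its exponent vector with constant $\max_i|\V_i|$, so $\|\S-\S'\|_1$ bounded gives uniform fellow-traveling with constant $K_4 := \big(\max_i|\V_i|\big)\cdot(K_3 + C)$ for an explicit $C=C(P,K_2)$. (That the correction from $s^\pm$ is bounded uses step (2): once $s$ is determined up to $K_3$, the endpoint equation determines $s^-,s^+$ up to $O(K_2+K_3)$ since $\V_\II$ and $\V_\JJ$ span a nondegenerate cone.)

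The main obstacle I anticipate is the few-sided and degenerate cases in step (2): when $\JJ-\II-1$ is $0$ or $1$, the scale is genuinely underdetermined by the combinatorial data alone, and one must instead extract $s$ from the endpoint. This is where the convexity of $P$ does the real work — consecutive edge directions being linearly independent means the $2\times 2$ system expressing the endpoint in terms of $(s^-,s^+)$ (or $(s^-+s^+, s)$ in the two-sided subcase) has determinant bounded away from zero by a constant depending only on $P$, so the inversion is Lipschitz with a uniform constant. The bookkeeping of which ambiguous type to choose, and checking that a single choice of weight-shift plus cyclic permutation simultaneously normalizes both arcs, is the other fiddly point; I would handle it by first fixing a canonical representative in each weight-shift/cyclic-permutation class (say, the one with $s^-$ as small as possible), proving the statement for canonical representatives, and noting that passing to canonical form changes neither the endpoint nor the enclosed area and changes the traced path by a controlled amount.
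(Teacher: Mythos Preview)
Your overall strategy---encode each arc by its exponent vector $\S(\tau)$ and argue that nearby $(a,b,\ell)$ data forces nearby exponent vectors---is sound and close in spirit to what the paper does. But step~(2) has a genuine gap. You argue that in the many-sided case the combinatorial length and the scale are ``comparable,'' $\tfrac1r\ell(\tau)\le s\le\ell(\tau)$, and treat this as sufficient. That is a multiplicative statement, while the lemma demands an additive bound $|s-s'|\le K_3$. Concretely: with $m=\JJ-\II-1\ge 1$ full sides, the length equation $\ell=s^-+ms+s^+$ with $0\le s^\pm\le s$ only confines $s$ to the interval $[\ell/(m+2),\ell/m]$, whose width is of order $\ell$. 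So two arcs of the same combinatorial type with $|\ell-\ell'|\le K_1$ can, on the basis of length alone, have scales differing by an amount growing with $\ell$. You correctly invoke the endpoint in the few-sided case, but you need it in every case: the full $3\times 3$ linear system (two endpoint coordinates plus the length, in the three unknowns $s^-,s,s^+$) is what pins down $s$ additively.

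The paper organizes the argument differently, by the position of the endpoint rather than by the number of sides. When $(a,b)$ is generic (nonzero and not parallel to any $\V_i$), convexity of $P$ forces the chord problem---find a subarc of some $sP+{\sf q}$ from $\zero$ to $(a,b)$ of arclength $\ell$---to have a unique solution, so $(a,b,\ell)\mapsto(s^-,s,s^+)$ is a well-defined piecewise-linear map, hence Lipschitz with constant depending only on $P$; this yields both $K_3$ and fellow-traveling at once. The non-generic cases ($(a,b)=\zero$ or on a ray $\R\V_\II$) are precisely where the chord problem has a family of solutions, and these are exactly the situations absorbed by weight-shifting and cyclic permutation. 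Your step~(1) correctly anticipates that these moves handle the ambiguities, so once you fix step~(2) by using all three linear equations uniformly (not just length in the many-sided regime), your exponent-vector argument goes through and is essentially equivalent to the paper's. One minor note: Lemma~\ref{lem::parallel-fam} plays no role here---that lemma is about area maximization, whereas this one is pure linear algebra on the side-length data.
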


\begin{proof}
Suppose $\tau$ is a $P$--arc with endpoint $(a,b)\in \R^2$, and the
length of $\tau$ is $\ell$.  By convexity of $P$, there are only very
limited ways to find $(a,b)$ as a chord of a scaled copy of $P$ with
given arclength, and this restricts the shape of $\tau$.

If $(a,b)=(0,0)$ then $\tau$ can be any translate of $P$ 
containing the origin.  Thus for endpoints near $(0,0)$, the arc can be nearly of type $(\II,\II)$ for 
any $\II$, but the scale is determined by the length.  
If one is of nearly of type $(\II,\II)$ and the other is of type $(\JJ,\JJ)$, then weight-shifting
followed by  cyclic permutation
suffices to make them fellow-travelers.

If $(a,b)$ is a nonzero multiple of 
$\V_\II$ and $\ell$ is sufficiently long, then the $P$--arc must be of type $(\II,\II)$.
In this case there is clearly a
family of polygons with the same $(a,b,\ell)$ and type $(\II,\II)$
obtained by shifting weight between $s^-$ and $s^+$, and these are the
only solutions to the chord problem.  So if one of the arcs, say $\tau$, has an endpoint precisely
on the $\V_\II$ direction, then it admits a weight-shifted family of $P$--arcs and we might 
have to choose the right one to match $\tau'$.  

The only remaining case is that $(a,b)$ is a nonzero vector which is not parallel to any $\V_i$, 
in which case the
triple $(a,b,\ell)$ uniquely determines not only $s$ but determines
$\tau$ completely (by convexity of $P$), and the starting side and ending side 
are different ($\II\neq\JJ$).
Within a combinatorial type, the scale $s$ is a
linear function of $(a,b,\ell)$, and indeed it is piecewise linear
(and continuous) across combinatorial types as $(a,b)$ varies over
the sector between any successive $\V_i,\V_{i+1}$.  
(See \cite{dm} for details and examples.)  Being far from the origin forces  arcs 
with nearby endpoints to be nearly of the same combinatorial type,
and so they fellow travel.  
\end{proof}

$P$--arcs have the proportions required to belong to $P$ as sub-arcs.
Generalizing slightly, in the family 
$$\gamma_\T = \C_{\II-1} \sdot \V_\II^{t_\ii} \sdot \C_{\II} \sdot \V_{\II+1}^{t{\ii+1}}\dots
\C_{\JJ-1} \sdot \V_{\JJ}^{t_\jj}\sdot \C_{\JJ}$$ 
as $\T \in\R^r$ varies,
a path with $\JJ-\II\ge 2$ will be called {\em balanced} if
 $t_\II,t_\JJ\le t_{\II+1}=t_{\II+2}=\dots=t_{\JJ-1}$.
 Such a path is {\em $K$--almost balanced} if all of these equalities
 and inequalities hold within $K$, i.e., 
 $|t_i-t_j|\le K$ for $\II<i,j<\JJ$ and $t_\II,t_\JJ \le t_i+K$ for all $\II<i<\JJ$.

Below, we will take the area of a not-necessarily-closed  path
to be its balayage area:  the signed area enclosed by concatenating the path 
with the chord from its endpoint to its start point.

\begin{lemma}[Balancing paths]\label{lem::balancing}
Suppose a closed convex polygon $P=\V_1\V_2 \dots \V_r$ encloses
maximal area among all closed $P_\S$ with $\sum_{i=1}^r s_i=r$.  Let
$$\gamma_\T = \C_{\II-1} \sdot \V_\II^{t_\ii} \sdot \C_{\II} \sdot \V_{\II+1}^{t{\ii+1}}\dots
\C_{\JJ-1} \sdot \V_{\JJ}^{t_\jj}\sdot \C_{\JJ},$$ 
and consider the affine subspace 
$$M(\lambda):=\{ {{\sf t}}\in \R^r : \sum_{i=\II}^\JJ t_i=\lambda
\}.$$ Then for every $\Delta>0$ and every vector $(k_1,\ldots,k_r)\in
\R^r$, there is a constant $K$ such that, for $\lambda\gg \Delta$ and
any translated lattice $L\le M(\lambda)$ with
diameter of its fundamental domain $\le \Delta$, the
maximum value of $f(\gamma_\T) = \Area(\gamma_\T)+\sum_{i=\II}^\JJ
k_it_i$ over $\T\in L$ occurs when the path is $K$--almost balanced
with respect to $P$.
\end{lemma}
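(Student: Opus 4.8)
The plan is to reduce the maximization of $f(\gamma_\T)$ over the lattice $L$ to the continuous isoperimetric statement in the parallel family (Lemma~\ref{lem::parallel-fam}), and then show that replacing ``continuous maximum'' by ``lattice maximum'' costs only a bounded error in the exponent vector. First I would isolate the part of $\gamma_\T$ that actually varies: the connecting segments $\C_{\II-1},\dots,\C_\JJ$ are fixed, so $\Area(\gamma_\T)$ differs from $\Area$ of the sub-polygon spanned by the variable pieces $\V_\II^{t_\ii},\dots,\V_\JJ^{t_\jj}$ only by terms that are affine in $\T$ (each fixed connector contributes a constant area plus cross-terms $\V_i^{t_i}$ against a fixed vector, which are linear in $t_i$). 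Absorbing these affine terms, together with the given $\sum k_i t_i$, into a single linear functional $\langle m, \T\rangle$, the problem becomes: maximize $\Area(\V_\II^{t_\ii}\cdots\V_\JJ^{t_\jj}) + \langle m,\T\rangle$ over $\T \in L \subset M(\lambda)$. The quadratic part is exactly the negative-definite form from the proof of Lemma~\ref{lem::parallel-fam}, restricted to the affine subspace $M(\lambda)$.

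Next I would locate the continuous maximizer. On the affine space $M(\lambda)$ the objective is a strictly concave quadratic, so it has a unique global maximizer $\T^*(\lambda)$, found by Lagrange multipliers. The key computation is that, because the hypothesis says $P$ itself (all $s_i$ equal) is the area-maximizer among closed $P_\S$ with fixed coordinate sum, the leading ($\lambda\to\infty$) behavior of $\T^*(\lambda)$ is proportional to the constant vector $(1,1,\dots,1)$ on the indices $\II,\dots,\JJ$: i.e. $t^*_\ii \approx t^*_{\ii+1}\approx \cdots \approx t^*_\jj \approx \lambda/(\JJ-\II+1)$, with the linear functional $\langle m,\cdot\rangle$ and the boundary indices $\II,\JJ$ producing only an $O(1)$ perturbation independent of $\lambda$. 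This is where I'd do the one genuine computation: write out the gradient condition and check that the $\lambda$-linear part of the solution is balanced, so that $\T^*(\lambda)$ is $K_0$--almost balanced for a constant $K_0$ depending only on $P$, the $k_i$, and the fixed connectors — not on $\lambda$, provided $\lambda \gg \Delta$.

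Finally I would pass from the continuous maximizer to the lattice maximizer. Since the objective is a concave quadratic with Hessian bounded away from degeneracy along $M(\lambda)$ (the Hessian is fixed, independent of $\lambda$), its sublevel sets are ellipsoids of controlled eccentricity; the lattice maximizer $\T^\dagger \in L$ must lie within distance $C\Delta$ of $\T^*(\lambda)$, where $C$ depends only on the Hessian and hence only on $P$. (Concretely: if $\|\T^\dagger - \T^*\|$ were large, the quadratic would have dropped by more than the best value attainable at a lattice point within $\Delta$ of $\T^*$, a contradiction.) Then $\T^\dagger$ is $(K_0 + C\Delta)$--almost balanced, and we set $K = K_0 + C\Delta$. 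The main obstacle I anticipate is the bookkeeping in the first two steps: cleanly separating the affine ``garbage'' coming from the fixed connectors $\C_i$ and verifying that the $\lambda$-linear part of the constrained quadratic maximizer is genuinely the balanced vector — this rests on translating the hypothesis ``$P$ maximizes area at fixed coordinate-sum'' into the statement that $(1,\dots,1)$ is the direction of steepest ascent of $\Area$ along the constraint, which is exactly the content of Lemma~\ref{lem::parallel-fam} after identifying $\ell_i$ with $1$ for the relevant indices. Everything after that is a routine ellipsoid/covering-radius estimate.
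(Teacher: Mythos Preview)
Your approach is essentially the same as the paper's: strip off the fixed connectors to expose a negative-definite quadratic plus a linear term on $M(\lambda)$, locate the continuous maximizer near the balanced point using the isoperimetric hypothesis on $P$, then pass to the lattice maximizer by a covering-radius/ellipsoid argument. The paper phrases the first step as ``straightening corners'' to produce a trace $\tau=\V_\II^*\cdots\V_\JJ^*$ and the last step as ``completing the square'' and observing the lattice maximum sits within $\Delta$ of the real maximum; your condition-number bound $\|\T^\dagger-\T^*\|\le C\Delta$ makes this last step more explicit than the paper does.

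One small point worth flagging: you assert the Hessian is ``bounded away from degeneracy along $M(\lambda)$,'' and the paper likewise asserts the form is ``negative definite on $M(\lambda)$.'' For arcs that nearly close up (type $(\II,\II)$ or $(\II,\II-1)$), there can be a genuine null direction corresponding to weight-shifting between the first and last sides, so strict definiteness fails. This does not break the conclusion---moving along that null direction stays inside the almost-balanced set---but your ellipsoid argument as written needs a small adjustment to accommodate a possibly degenerate (but still negative semi-definite) Hessian: replace ``within $C\Delta$ of the point $\T^*$'' by ``within $C\Delta$ of the affine set of maximizers.'' Both your sketch and the paper's proof leave this wrinkle implicit.
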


\begin{proof}
Given $\gamma_\T$, we consider a path $\tau$ of the form  $\V_\II^* \V_{\II+1}^* \dots
\V_\JJ^*$ whose beginning and end differ from that of $\gamma_\T$ by
``straightening'' the inital and final corners of $\gamma_\T$.  In
this way, the area of $\tau$ differs from that of $\gamma_\T$ by a
constant which is independent of both $\lambda$ and $\T$.

Consider the balayage area $\Area(\tau)$.  This is given by the sums
of determinants whose entries are affine in $\T$ It follows that this
is quadratic in $\T$ and is negative definite on $M(\lambda)$.  It is
not hard to see that the maxima for $\Area(\tau)$ lie on a ray
emanating from the origin and that the eigenvalues of the negative
definite form vary inversely with $\lambda$.  This follows from the
fact that area grows quadratically with respect to the scale.  Thus,
the level curves in each $M(\lambda)$ are identical, though the values
along each of these curves differ with $\lambda$.  Clearly
$\Area(\tau)$  restricted to $L$ is maximized on the innermost lattice
point with respect to these curves and this is within $\Delta$ of the
real maximum for $\Area(\tau)$. 

Completing the square with respect to the $k_i$, we see that
$f(\gamma_\T)$ is negative definite on $M(\lambda)$ with the same
eigenvalues as $\Area(\gamma_\T)$ but with its maximum on $M(\lambda)$
shifted by a fixed amount which independent of $\lambda$. We know that
$\Area(\tau)$ is maximized when its sides are balanced. Thus
$f(\gamma_\T)$ is maximized on $L$ when $\T$ is nearly balanced.
\end{proof}


\begin{remark}
This is the first of several places where something is shown to be bounded
with reference to a constant $K$.  To avoid proliferating notation, we will 
maintain the symbol $K$ in each successive place that a constant bound
is derived, enlarging it each time as necessary.  No earlier statement will
be hurt by subsequent enlargement, so that in the end one value of $K$ 
depending only on $S$ will suffice for all applications.
\end{remark}

\section{Simple shapes and approximate geodesics}
\subsection{Simple shapes and highest height}
Suppose $u,v\in H(\Z)$ project to integer vectors $\U,\V\in \M$.  We
write $\U\wedge \V$ to denote the determinant of the matrix with those
column vectors, i.e., the area of the parallelogram they define.  Then
when letters $u$ and $v$ are exchanged, the effect on area is given by
the wedge: $z(uv)=z(vu)+\U\wedge \V$.  For instance,
$z(\E_1\E_2)=\frac 12$; $z(\E_2\E_1)=-\frac 12$; and
$\E_1\wedge\E_2=1$.  Note that two group elements commute if and only
if they project to the same direction in the plane.

To keep track of all the possible effects of rearranging letters, we once and 
for all define 
$$N=N(S):=\mathop{\rm lcm}\{\U\wedge\V : u,v\in S\}.$$

\begin{definition} 
For $(a,b)\in\M$, define $n_0(a,b)=|(a,b)|_{\pi(S)}$, so that 
 the fiber $(a,b,*)$ can be reached by 
a spelling path of length $n$ if and only if $n\ge n_0$.
Then for $n\ge n_0$, 
we define the {\em highest height at length $n$
over $(a,b)$} 
 to be the largest $z$ coordinate reachable  
with at most $n$ letters, 
$$w_n=w_n(a,b):=\max\{t : \ell(a,b,t)\le n\}.$$
A spelling (or a group element) will be called {\em highest-height}
if it realizes $(a,b,w_n)$ at length $n$.
Let $W(a,b):=w_{n_0}(a,b)$ be first non-negative $w_n$.  
\end{definition}

Note that $w_n<w_{n+2}$, but that there
may be no spellings at all of a certain parity reaching $(a,b)$, in which 
case $w_n=w_{n+1}$.

\begin{definition}
Given a constant $K$, let $C(K)=\bigcup_{i=0}^K S^i$ 
be the strings in  $S$ whose length is at most $K$
(so that the evaluation map sends $C(K)$ onto the ball of radius $K$ 
in the word metric).
Then a {\em break word} is an element $c\in C(K)$ and 
a {\em break vector} is a tuple of  break words $\C=(c_0,\ldots,c_{\sides })$.

A {\em simple shape} is a tuple $\omega=(\II,\JJ,\B,\C)$, where $\C$
is a break vector, $\II,\JJ$ are indices ($1\le \II,\JJ \le \sides
$), and $\B=(b_1,\ldots,b_{\sides })$ is a vector of integers.  The
{\em simple shape domain} is $\Cone:=\{(s^-,s,s^+)\in \Z^3 : 0\le s^-,s^+ \le s\}$
and the  {\em restricted domain} is $\Cone_0:=\{(s^-,s,0)\} \subset \Cone$.
(Compare to Lemma~\ref{lem::comb-types}.)

Each such shape induces a map from the shape domain to spellings in
the group.  That is, define the evaluation of a simple shape to be
$$\omega(s^-,s,s^+)=c_{\II-1}\sdot \aa_\II^{s^- + b_\ii} \sdot c_\II \sdot \aa_{\II+1}^{s+b_{\ii+1}} \sdot c_{\II+1} \cdots
\aa_{\JJ-1}^{s+b_{\jj-1}}\sdot c_{\JJ-1}\sdot  \aa_{\JJ}^{s^+ + b_\jj}\sdot c_{\JJ},$$
recalling that $\aa_i=a_i^{\sigma_i}$ is defined so that $\isoper=\aa_1\cdots\aa_\sides$. 
Further, we take the convention that if $\II=\JJ$, i.e., if the shape
starts and ends with the same generator, then the domain is restricted
to $\Cone_0$.  
\end{definition}

\begin{example}\label{ex::redundo-gens}
Consider the nonstandard generators for $H(\Z)$ given by
$S=\{a,b,A,B\}^{\pm}$, where $a,b$ are the standard generators and
$A,B$ are big generators $A=a^3,B=b^3$, and a bar denotes the inverse
of an element.  Then the word
$A^{5}aB^9b\bar{a}\bar{A}^{10}\bar{b}\bar{B}^3$ is given by evaluating
the shape with $\C=(e,a,b\bar{a},\bar{b},e)$, $\B=(0,0,1,0)$, $\II=1$,
$\JJ=4$ at $\S=(5,9,3)$.
\end{example}

\begin{remark} There are other shapes with other data that evaluate to
  the same path.  
\end{remark}

\subsection{Bounded difference between word and \CC metrics}\label{sec::bddDiff}

\begin{proposition}[Form for highest-height geodesics]
\label{prop::simpleshapes}
Given a finite generating set $S$, there is 
a number $K=K(S)$ such that any highest-height
spelling path is the evaluation  
of some simple shape with break words from $C(K)$
separating runs of significant letters
given by integer values
$s^-,s^+\le s$ with exponent corrections $0\le b_i\le K$.
\end{proposition}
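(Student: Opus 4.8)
The plan is to take an arbitrary highest-height spelling path $\gamma$ realizing $(a,b,w_n)$ at length $n$, reorganize its letters so that runs of equal significant generators are grouped together, and control everything that is left over in a bounded-size break word. The starting point is the height/balayage decomposition $z(\gamma)=z_b(\gamma)+z_A(\gamma)$ together with the commutation identity $z(uv)=z(vu)+\U\wedge\V$: rearranging letters only changes the height by an integer multiple of $1/N$, so any reordering of $\gamma$'s letters produces a spelling of the \emph{same} length ending at the \emph{same} horizontal point $(a,b)$, with height differing by a controlled amount. First I would dispose of interior letters and of short runs: since $\gamma$ is highest-height, a standard exchange argument shows that any occurrence of an interior letter, or of an edge letter whose direction is not one of the significant directions $\A_1,\dots,\A_\sides$, can be moved and/or absorbed, and that the number of ``stray'' significant letters not belonging to a long monotone run is $O(1)$ (all bounded in terms of $S$ alone). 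These stray letters, plus the interior letters, get swept into the break words $c_0,\dots,c_\sides$.

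Next I would establish the cyclic/monotone structure of the long part. After the cleanup, $\gamma$ consists (up to bounded edits) of runs $\A_{i_1}^{m_1}\A_{i_2}^{m_2}\cdots$ of significant directions, and because $\gamma$ is highest-height its shadow must, by the \CC comparison (Pansu/Krat, and Lemma~\ref{cc-types} together with Lemma~\ref{lem::comb-types}), fellow-travel an arc of the standard isoperimetrix $\isoper=\AA_1\cdots\AA_\sides$ to within a bounded error. This forces the directions appearing to be \emph{cyclically consecutive} — i.e. the run indices, read in order, increase through a contiguous block $\II,\II+1,\dots,\JJ$ of the cyclic index set — since any out-of-order pair of directions could be swapped to strictly increase the enclosed area, contradicting highest-height. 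This is exactly the combinatorial type $(\II,\JJ)$ in the sense of Lemma~\ref{lem::comb-types}, and it also pins down that each interior run length $m_j$ is a multiple of $\sigma_j$ up to a bounded remainder, which is what lets us write $\aa_j^{s+b_j}$ with $0\le b_j\le K$ and absorb the remainder into a break word.

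Then I would invoke the balancing result, Lemma~\ref{lem::balancing}, applied to the family $\gamma_\T$ with the $\C_i$ being the (already bounded) break words: among all spellings of the fixed length $n$ with the fixed combinatorial type reaching $(a,b)$, the one maximizing area (hence height) is $K$-almost balanced, meaning the interior exponents are equal up to $K$ and the two end exponents $s^-,s^+$ are at most $s+K$. Reading $s$ off as this common interior value and folding the $\le K$ slack into the $b_i$'s (enlarging $K$ as per the Remark) gives precisely the asserted normal form $\omega(s^-,s,s^+)$ with $0\le s^-,s^+\le s$. The main obstacle, and where the real work lies, is the second step: showing that highest-height \emph{forces} the shadow to track the isoperimetrix closely enough that only cyclically consecutive significant directions can appear and nothing else survives the cleanup — this is where one must carefully combine the bounded difference between word and \CC metrics with the Euclidean chord-uniqueness statements of Lemma~\ref{lem::comb-types}, and make sure every ``bounded'' quantity genuinely depends only on $S$.
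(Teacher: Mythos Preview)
Your core moves --- the swap argument for cyclic ordering, the exchange argument bounding non-significant letters, and the appeal to Lemma~\ref{lem::balancing} --- are exactly the paper's proof, and they are enough on their own.  Where you diverge is in the second step, where you lean on ``the \CC comparison (Pansu/Krat, and Lemma~\ref{cc-types} together with Lemma~\ref{lem::comb-types})'' to conclude that the shadow fellow-travels an isoperimetrix arc, and you flag this as ``the main obstacle, and where the real work lies.''  This is both circular and unnecessary.

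It is circular because in this paper the bounded-difference statement (Corollary~\ref{cor::boundedLengthDifference}, i.e.\ Krat's theorem) is \emph{deduced from} the present proposition; you cannot invoke it here.  Even if you cited Krat's external proof, bounded difference of \emph{lengths} does not by itself give fellow-traveling of \emph{shadows}, and Lemma~\ref{lem::comb-types} only compares two $P$--arcs, not a word-geodesic shadow to a $P$--arc.  It is unnecessary because the direct arguments you already sketch do all the work: (i) if any two letters are out of cyclic order in projection, swapping them strictly increases area while preserving boost and endpoint, contradicting highest-height; (ii) if a non-significant generator $u$ has large total exponent, replacing $u^{kq}$ by $a_i^{kp}a_{i+1}^{kr}$ gains area quadratically in $k$ and loses boost only linearly, again contradicting highest-height.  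Those two elementary steps already force $\gamma$ into the form $c_{\II-1}\aa_\II^{n_\ii}c_\II\cdots c_\JJ$ with bounded corners, after which Lemma~\ref{lem::balancing} gives $K$--almost balance.  So drop the \CC comparison entirely; the proof is shorter and self-contained without it.
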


That is, in a very strong sense, highest-height spellings track
along an arc of a canonical polygon (Busemann's isoperimetrix), 
which has a spelling of the form
$\aa_\II^{s^-}\aa_{\II+1}^s \aa_{\II+2}^s \cdots \aa_{\JJ-1}^s \aa_\JJ^{s^+}$
with $s^-,s^+\le s$, not necessarily integers.  The highest-height spellings
only differ by bounded break words appearing in the corners, and by 
bounded deviation in run lengths.

\begin{proof} 
We suppose that $\gamma$ is a highest-height geodesic over $(a,b)$ and
that its length is $n$.  We claim that the letters of $\gamma$ are in
cyclic order.  If not, we produce $\gamma'$ by putting its letters
into cyclic order.  This changes neither the horizontal endpoint
$(a,b)$ nor the boost $z_b(\gamma)$.  If two of the letters which we
move past each other in this process do not lie in the same direction
in projection, then $z(\gamma') > z(\gamma)$, contradicting our
assumption.  
Thus the letters appearing in $\gamma$ are arranged in cyclic order
in projection.  Also if there are multiple letters $a_i,a_i',a_i''$ projecting 
to the same significant $\A_i$, then clearly $\gamma$ must use the one 
with greatest boost to achieve highest height.

We now claim that there is a bound $K$ on the total exponent of any
non-significant generator.  To see this, suppose that $u$ is a
non-significant generator appearing with large exponent, as a subword
$u^m$.  Supposing $a_i$ and $a_{i+1}$ are the significant generators whose
directions bound the sector that $\U$ lies in, there must be integers $p,q,r$
so that $q\U=p\A_i+r\A_{i+1}$, with $q\ge p+r$.  
We can then replace $u^{kq}$ by  $a_i^{kp} a_{i+1}^{kr}$.  The area
gained by this operation is quadratic in $k$ while any boost lost is
linear in $k$.  Consequently, if $m$ is sufficiently large, this
operation increases height.  So if the total exponent of $u$ in $\gamma$
is $m$, then the reshuffling which brings all powers of $u$ together 
and then performs the subword replacements above will produce a 
path over $(a,b)$ with no greater length and with higher height, 
contradicting the assumption.

It follows now that $\gamma$ consists of corner words of bounded length between
ordered runs of highest-boost significant letters.  That is, we
have
$$\gamma = c_{\II-1} \sdot \aa_\II^{n_\ii} \sdot c_{\II} \sdot \aa_{\II+1} ^{n_{\ii+1}}\dots
c_{\JJ-1} \sdot \aa_{\JJ-1}^{n_\jj}  \sdot c_{\JJ}.$$ 
The statement now follows from an application of the Balancing Lemma
(Lemma~\ref{lem::balancing}).
\end{proof}

\begin{corollary}[Bounded difference]\label{cor::boundedLengthDifference}
For each generating set $S$, there exists a constant $K=K(S)$ with the
following property. If $w_n(a,b) < c \le  w_{n+1}(a,b)$
then $n < |(a,b,c)| \le n+ K$, and if $0\le c\le W=w_{n_0}(a,b)$, then $n_0\le |(a,b,c)| \le n_0+K$.

Consequently, there exists a constant $K=K(S)$ such that 
$$\dCC(\X,\zero) - K  \le |\X|_S\le \dCC(\X,\zero) + K.$$
\end{corollary}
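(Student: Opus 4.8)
\emph{Plan of proof.} The two lower inequalities are essentially definitional: since $c>w_n(a,b)$, no spelling path of length $\le n$ evaluates to $(a,b,c)$, so $|(a,b,c)|>n$; and $|(a,b,c)|\ge n_0(a,b)$ always, since $n_0=|(a,b)|_{\pi(S)}$ letters are needed just to reach the fiber over $(a,b)$. The real content is the upper bounds and the consequence for $\dCC$, and for both I would study, for fixed $(a,b)$, which heights are realized by spelling paths of a given length; by Proposition~\ref{prop::simpleshapes}, applied to highest-height spellings, this is governed by near-isoperimetrix arcs $c_{\II-1}\aa_\II^{e_\II}c_\II\cdots\aa_\JJ^{e_\JJ}c_\JJ$ with $\JJ-\II$ bounded.

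To prove $|(a,b,c)|\le n+K$: since $0\le W(a,b)\le w_n(a,b)<c\le w_{n+1}(a,b)$, there is a least $m\le n+1$ with $w_m(a,b)\ge c$, of the parity of $n+1$ (which is represented because $w_n<w_{n+1}$). Fix a highest-height spelling $\gamma$ of length $m$ realizing $(a,b,w_m)$, which by Proposition~\ref{prop::simpleshapes} is a simple-shape evaluation. The crucial observation is that \emph{permuting the letters of $\gamma$ leaves its length and horizontal endpoint unchanged but moves its height through a full interval}: boost is permutation-invariant, reversing the letter order negates the balayage area (a short telescoping identity), so the reversal has height $-w_m+2z_b(\gamma)=-w_m+O(m)$, and interpolating by adjacent transpositions changes the height in steps bounded by $\max\{|\U\wedge\V|:u,v\in S\}$. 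Hence every height of the appropriate residue in $[-w_m+O(m),\,w_m]$ is realized over $(a,b)$ at length exactly $m$; as $0\le c\le w_m$, this includes $c$ up to a residue adjustment costing $O(1)$ extra letters (possible since $S$ generates $H(\Z)$). So $|(a,b,c)|\le m+O(1)\le n+K$ after enlarging $K$. The case $0\le c\le W$ is the same, carried out at length $n_0+O(1)$, where a non-degenerate arc is available to permute.

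For the comparison of $|\cdot|_S$ with $\dCC(\cdot,\zero)$ I would establish
$$c(a,b,\,n-K)\ \le\ w_n(a,b)\ \le\ c(a,b,\,n+K)$$
for all $(a,b)$ and all $n\ge n_0(a,b)$ above an absolute constant, where $c(a,b,\ell)$ is the height of the regular \CC geodesic of length $\ell$ from $\zero$ (Section~\ref{sec::CC}, Lemma~\ref{cc-types}); the finitely many elements of small word length or small \CC distance are dealt with directly. For the upper inequality, a length-$n$ spelling realizing $(a,b,w_n)$ has a shadow of $L$-length $\le n$ whose horizontal lift is an admissible curve of \CC-length $\le n$ ending at $(a,b,\,w_n-z_b)$ with $|z_b|\le n\cdot\max\{|z(g)|:g\in S\}$, so $w_n-z_b\le c(a,b,n)$. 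For the lower inequality, the regular \CC geodesic of length $n-K$ traces an isoperimetrix arc $\AA_\II^{s^-}\AA_{\II+1}^{s}\cdots\AA_\JJ^{s^+}$; rounding its finitely many exponents to integers and correcting the endpoint with a bounded word yields a spelling path of length $\le(n-K)+O(1)\le n$ whose height differs from $c(a,b,n-K)$ by $O(n)$. In both directions the additive $O(n)$ slack is absorbed by enlarging $K$, using that $\ell\mapsto c(a,b,\ell)$ is strictly convex with derivative $\asymp\ell$ uniformly in $(a,b)$ (from the isoperimetric description of \cite{dm}). Granting the comparison, for $\X=(a,b,c)$ with $c\ge0$ (using the symmetry $c\mapsto-c$) and $m=|\X|_S$ one has $c\le w_m(a,b)\le c(a,b,m+K)$ and $c>w_{m-1}(a,b)\ge c(a,b,m-1-K)$, so monotonicity of $c(a,b,\cdot)$ gives $m-1-K<\dCC(\X,\zero)\le m+K$ when $c\ge c_0(a,b)$; the cases $c<c_0(a,b)$ and $m=n_0(a,b)$ are handled via $\dCC(\X,\zero)=\normL{(a,b)}=n_0(a,b)+O(1)$ and the blow-up of $c'(a,b,\cdot)$ at $\ell=\normL{(a,b)}$. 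In all cases $|\X|_S$ and $\dCC(\X,\zero)$ differ by $O(1)$.

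I expect the sweep step to be the main obstacle: showing that permutations of a simple-shape spelling realize \emph{every} height of the correct residue in the whole interval down to $\approx-w_m$, not just a sparse subset, and that $O(1)$ extra letters genuinely suffice to correct the residue modulo $\gcd\{\U\wedge\V:u,v\in S\}$ — which is exactly where $\langle S\rangle=H(\Z)$, rather than a proper subgroup, is used. A secondary difficulty is extracting the uniform convexity and derivative bounds for $c(a,b,\cdot)$, especially near $\ell=\normL{(a,b)}$ where regular and unstable \CC geodesics meet.
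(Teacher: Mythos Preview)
Your approach diverges from the paper's at the key step, and the ``main obstacle'' you flag is a genuine gap.

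For the upper bound $|(a,b,c)|\le n+K$ when $w_n<c\le w_{n+1}$, the paper does \emph{not} permute a single highest-height spelling. Instead it takes highest-height geodesics at \emph{both} lengths $n$ and $n+1$, uses Proposition~\ref{prop::simpleshapes} together with Lemma~\ref{lem::comb-types} to show their projections $K_0$--fellow-travel, and then builds an interpolating family of planar paths $\beta_0,\beta_1,\dots,\beta_{n+1}$ (each following one projection for $i$ steps, crossing over by a bounded segment, and finishing along the other). The lifted heights $c_i$ change in bounded increments and run from $w_{n+1}$ down to $w_n$, so some $c_i$ is within $O(1)$ of the target $c$, and a bounded tail finishes. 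This sidesteps your sweep problem entirely: the target $c$ lies in $(w_n,w_{n+1}]$ \emph{by hypothesis}, so no lower-endpoint estimate is needed.

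Your permutation sweep, by contrast, reaches only the interval $[w_m-2z_A(\gamma),\,w_m]$, and its lower endpoint $z_b(\gamma)-z_A(\gamma)$ can sit above $c$ whenever boost dominates area --- for instance when the highest-boost significant letter has large $z$--coordinate and the shape over $(a,b)$ has short sides. The paper hits exactly this wall in the $0\le c\le W$ case and repairs it not by permutation alone but by conjugating a single significant letter by a bounded power $u^k$ with $\U\wedge\A_{\II+1}<0$, forcing the height below zero before interpolating by swaps. Your proposal notes the difficulty but supplies no analogous repair. For the \CC comparison the paper is also more direct: once the first part is done, the trace of a highest-height spelling at length $n$ is an $\isoper$--arc whose lift is itself a \CC geodesic of length $n+O(1)$, giving $\dCC((a,b,w_n),\zero)=n+O(1)$ immediately; your route via $c(a,b,n\pm K)$ and uniform derivative bounds on $\ell\mapsto c(a,b,\ell)$ is workable but requires extracting those bounds, and the $O(n)$ slack from boost cannot be absorbed without them.
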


Put differently, the embedding of $H(\Z)$ with generating set $S$ into
$H(\R)$ with the corresponding \CC metric is a $(1, K)$
quasi-isometry. 

\begin{proof}
By definition $(a,b,w_n)$ is the highest-height element of the fiber
over $(a,b)$ which can be reached by a spelling of length less than or
equal to $n$, so $n < |(a,b,c)|$.

Let $\omega(s^-,s,s^+)$ and $\omega'(t^-,t,t^+)$ be shapes evaluating
to geodesic spellings for $g=(a,b,w_n)$ and $g'=(a,b,w_{n+1})$.  Let
$\tau$ and $\tau'$ be $\isoper$-arcs which fellow-travel these in
projection (whose existence is guaranteed by the previous result). 
If $\tau$ and $\tau'$ are of almost the same combinatorial
type, then the polygonal paths $\beta=\pi(\omega(s^-,s,s^+))$ and
$\beta'=\pi(\omega'(t^-,t,t^+))$ fellow-travel.  Consider the sequence of
paths $\beta' = \beta_0,\beta_1,\dots,\beta_{n+1}= \beta$ formed as follows.  For
$i=1,\dots n$, let $\beta_i$ be the path starting along $\beta$ until $\beta(i)$,
taking a geodesic from $\beta(i)$ to $\beta'(i)$, and continuing along
$\beta'$. Since $\beta$ and $\beta'$ $K_0$--fellow-travel for some $K_0$, 
the connecting geodesics have
bounded length, so each $\beta_i$ has length at most $n+1+K_0$.  Take
$\gamma_i$ to be the lift of $\beta_i$.  These $\gamma_i$ end at group
elements $(a,b,c_i)$ with $|c_{i+1}-c_i| \le 2K_0+2$.  
Thus any value $(a,b,c)$ in the range in question can be reached by tacking
a bounded-length path on to the end of an appropriate $\gamma_i$.
It follows that
there is $K$ such that for each $c$ with $w_n <c \le w_{n+1}$,
$|(a,b,c)| \le n +K$ as required.

If $\tau$ and $\tau'$ are not of almost the same combinatorial type,
then by Lemma~\ref{lem::comb-types} $(a,b)$ is close to the origin and
$\tau$ and $\tau'$ almost complete the entire boundary of an
isoperimetrix.  It follows that we can replace $\omega'(t^-,t,t^+)$ by
a spelling path which fellow-travels $\omega(s^-,s,s^+)$ in projection and is
only boundedly longer than $\omega'(t^-,t,t^+)$.  
(To be concrete, the blocks of significant letters and the corners can be 
preserved but reordered to correspond to the combinatorics of $\tau'$.)

Note that for an $\isoper$--arc of spelling length $n$, its length 
 the $L$--norm is $n$, so its lift has \CC length $n$ as well and it is geodesic.
 Therefore $\dCC\left( (a,b,w_n),\zero\right)$ is boundedly close to $n$ and we are done
 with the case $w_n<c\le w_{n+1}$.
 
For heights below $W$, we begin with a highest-height spelling realizing $(a,b,w_{n_0})$.
By permuting the letters, we can lower the height in bounded increments down to some minimum.
Suppose it can be lowered to a non-positive height.
Then since the intermediate heights can be reached by appending a bounded-length correction word,
we have $n_0\le |(a,b,c)| \le n_0+K$.  On the other hand, by Lemma~\ref{cc-types}, the 
\CC distance from $\zero$ is constant in the $(a,b)$ fiber up to the first height reached by a regular 
geodesic, which is boundedly close to $(a,b,W)$.
We enlarge the constant $K$ from the first statement in the Lemma to be sufficient for the 
second statement.

On the other hand, it may be that every permutation of the letters in the spelling
has positive height, for instance if the spelling is simply a single repeated letter with positive boost.
In this case, suppose that $a_{\II+1}$ is the first significant letter in the spelling and 
choose some significant letter $u$ such that $\U\wedge\A_{\II+1}<0$.  It follows that there is some 
power of $u$ such that the conjugate $u^k a_{\II+1} u^{-k}$ has height below zero.  From this 
modified word, complete the proof as before with successive permutations.

Finally, observe that the map $g\mapsto g^{-1}$ is a length-preserving
bijection which carries $(a,b,c)$ to $(-a,-b,-c)$, so the $c<0$ case is similar.
\end{proof}

This gives a new proof of Krat's result.
And in particular, since Krat's theorem (bounded difference) 
has a stronger conclusion than Pansu's theorem
(ratio goes to 1), our argument also gives a direct geometric proof of Pansu's 
theorem for the special case of arbitrary word metrics on $H(\Z)$.

\subsection{Simplification}

We will see below that every regular element has a geodesic which is
close to a simple shape.  To this end, we show that we can modify paths to become simple
shapes while staying in the same fiber and increasing height in a
controlled manner.

\begin{lemma}[Simplifying paths]\label{lem::simplification}
There is a constant $K=K(S)$ 
so that for each spelling path $\gamma$ there exists a refined path $\gamma_1$ 
with the following properties.
\begin{itemize}
\item If $(a,b,c)$ is the evaluation of $\gamma$, then $\gamma_1$ evaluates
to $(a,b,c+kN)$ for some $k\ge 0$; 
\item the length of $\gamma_1$ is less than or equal to the length of $\gamma$;
\item $\gamma_1=\omega(s^-,s,s^+)$ for some simple shape $\omega$ with 
corners from $C(K)$.
\end{itemize}
\end{lemma}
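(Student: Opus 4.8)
The plan is to run an argument closely parallel to the proof of Proposition~\ref{prop::simpleshapes}, but without the hypothesis that $\gamma$ is highest-height: instead of always being able to increase height by reshuffling, we are allowed to increase height only in multiples of $N$, which is exactly what the lattice structure coming from $N=\mathop{\rm lcm}\{\U\wedge\V\}$ affords us. First I would reorder the letters of $\gamma$ into cyclic order in projection. Each time two letters in different significant directions are swapped, the height changes by $\pm(\U\wedge\V)$, hence by a multiple of $N$; and the total change is itself a multiple of $N$. If the net change is negative we are in trouble, so I would instead note that we may freely choose, among the (finitely many) cyclic orderings and among repeated letters in a common direction (picking the highest-boost representative), a reordering that does not decrease height --- and in fact any reordering changes height by a multiple of $N$, so replacing $c$ by the \emph{largest} achievable $c+kN$ with $k\ge 0$ is fine for the statement. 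This produces $\gamma'$ with the same $(a,b)$, length $\le \ell(\gamma)$, and height $c+kN$, $k\ge 0$, whose letters are cyclically ordered.

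Next I would bound the total exponent of each non-significant (interior or non-highest-boost) generator, exactly as in Proposition~\ref{prop::simpleshapes}: if an interior letter $u$ with shadow $\U$ in the sector spanned by $\A_i,\A_{i+1}$ occurs with total exponent $m$, collect its occurrences and use the integer relation $q\U = p\A_i + r\A_{i+1}$ with $q\ge p+r$ to replace $u^{kq}$ by $a_i^{kp}a_{i+1}^{kr}$. This does not lengthen the path, it keeps the same horizontal endpoint, and — since area gained grows quadratically in $k$ while boost lost grows linearly — it does not decrease height once $m$ is large; moreover the net height change is again a $\Z$-combination of wedges, hence a multiple of $N$. Here there is one wrinkle absent from the highest-height case: we must check that doing these replacements for several different interior directions, together with the re-sorting they require, still changes height only by a multiple of $N$ and does not decrease it. Since every elementary move (a swap, or one $u^q\to a_i^pa_{i+1}^r$ substitution) changes height by a multiple of $N$, and we only perform substitutions whose individual effect is $\ge 0$ once exponents are large, iterating and re-sorting leaves us with $k\ge 0$. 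After finitely many such passes we reach a path all of whose non-significant letters have total exponent $\le K$, and whose significant letters occur in cyclically ordered runs.

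At this point $\gamma'$ has the form $c_{\II-1}\,\aa_\II^{n_\ii}\,c_\II\,\aa_{\II+1}^{n_{\ii+1}}\cdots c_{\JJ-1}\,\aa_\JJ^{n_\jj}\,c_\JJ$, with corner words $c_j\in C(K)$ absorbing all the bounded-exponent non-significant material (and any leftover bounded pieces of significant runs that don't fit the block size $\sigma_i$ --- note $\aa_i=a_i^{\sigma_i}$, so I would write $n_i = \sigma_i m_i + (\text{remainder} < \sigma_i)$ and push the remainder into the adjacent corner, enlarging $K$ by $\max\sigma_i$). Exactly as in Proposition~\ref{prop::simpleshapes}, one then applies the Balancing Lemma (Lemma~\ref{lem::balancing}) with the $k_i$ recording the per-letter boosts, to conclude that, after a further bounded modification, the run-length vector is $K$-almost balanced, i.e.\ of the form $s^-,s,s,\dots,s,s^+$ with $s^-,s^+\le s$ up to bounded corrections $0\le b_i\le K$; this is precisely the assertion that $\gamma_1=\omega(s^-,s,s^+)$ for a simple shape $\omega$ with corners in $C(K)$. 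Absorbing the balancing modification costs at most another bounded additive term in $K$ and, being an area-maximizing adjustment on the relevant lattice $M(\lambda)$, only increases height (by a multiple of $N$, since all the moves involved are the same elementary moves).

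The main obstacle, and the place I would be most careful, is bookkeeping the sign of the accumulated height change: the statement insists on $k\ge 0$, so I cannot afford any single pass to lower the height. The clean way to handle this is the observation made above --- \emph{every} elementary rearrangement changes height by a multiple of $N$ --- combined with the fact that at each stage we are free to pick the height-maximizing choice among finitely many combinatorial options (cyclic order, which representative of a repeated direction, how to distribute remainders into corners, which balanced configuration). Thus one proves the slightly stronger statement ``there is a simple-shape path $\gamma_1$ in the same fiber mod $N$, no longer than $\gamma$, whose height is $\ge c$'' and reads off $c+kN$ with $k\ge 0$. A secondary subtlety is the degenerate case $\II=\JJ$ (a single significant direction, or none), where the shape domain is restricted to $\Cone_0$; there the Balancing Lemma is vacuous and one simply records the two bounded corner words and the single run, again pushing any excess into corners.
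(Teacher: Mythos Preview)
There is a genuine gap, and it stems from a direction error about $N$. You write that ``each time two letters in different significant directions are swapped, the height changes by $\pm(\U\wedge\V)$, hence by a multiple of $N$,'' and later that ``every elementary move \dots\ changes height by a multiple of $N$.'' But $N=\mathop{\rm lcm}\{\U\wedge\V\}$, so each wedge $\U\wedge\V$ is a \emph{divisor} of $N$, not a multiple. A single swap changes height by some $\U\wedge\V$, and an arbitrary $\Z$-combination of wedges need not be divisible by $N$ at all. Consequently, neither ``fully reorder into cyclic order'' nor ``replace $u^{kq}$ by $a_i^{kp}a_{i+1}^{kr}$'' is guaranteed to move the height by a multiple of $N$, and your appeal to ``pick the height-maximizing option among finitely many choices'' does not rescue the congruence $z(\gamma_1)\equiv c\pmod N$.

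The paper's proof is organized precisely to avoid this: every operation is performed in $N$-sized chunks so that each step changes height by an exact multiple of $N$. Concretely, (i) rather than fully sorting, one swaps only enough to reduce each out-of-order count $\Lambda_{u,v}$ so that $\Lambda_{u,v}\cdot(\U\wedge\V)<N$, rounding the height change down to the nearest multiple of $N$; (ii) non-significant blocks are replaced via $u^{kNq}\to a_i^{kNp}a_{i+1}^{kNr}$ (note the extra factor $N$), performed only when the net height effect is non-negative; (iii) stray letters on a side are commuted past $a_i^N$ at a time, in the height-increasing direction, until trapped near a corner; and (iv) balancing is carried out on the sublattice of exponent tuples differing from the current one by multiples of $N$ in each coordinate. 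Your outline has the right skeleton (sort, cash in, push to corners, balance), but to prove the lemma as stated you must replace each elementary move by its $N$-chunked version and argue termination for the partial sorting in step (i), as the paper does via the total count of out-of-order pairs.
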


\begin{proof}
Suppose that $u,v$ are any two letters appearing in $\gamma$ such that
$\U \wedge \V > 0$ (so that $u$ comes before $v$ in the cyclic
ordering of their projections, and replacing $vu$ with $uv$ increases
area).  Let $\Lambda_{u,v}=\Lambda_{u,v}(\gamma)$ be the sum of all of
the exponents $k$ appearing in distinct subwords $vu^kw$ of $\gamma$
with $w\in S$, $w\neq u$.  Then we can make generator swaps of $u$ and $v$
letters to change the height by any multiple of $\U\wedge \V$ less
than or equal to $ \Lambda_{u,v}\sdot \U \wedge \V$.  By rounding
$\Lambda_{u,v} \sdot \U \wedge \V$ down to the nearest multiple of
$N$, we can perform generator swaps to obtain $\gamma_1$, so that
$\Lambda_{u,v}(\gamma_1)\le N$ for all pairs $u,v$.

Notice that we may have to perform this procedure many times.  A
single application of this procedure reduces $\Lambda_{u,v}$ to be
less than $N$, but may increase $\Lambda_{u,v'}$.  We can perform this
procedure whenever there is some pair $u,v$ so that 
$\Lambda_{u,v}\sdot \U \wedge \V > N$.  We claim that repeated
applications of procedure must eventually terminate with a spelling
where there is no such pair.  To see this, consider the total number
of pairs of letters in the spelling which are out of order.  This
total number decreases at every application of the procedure, and
hence we must terminate with $\Lambda_{u,v}\sdot \U \wedge \V < N$ for
every pair $u$ and $v$.

Next, we will cash in any big blocks of non-significant letters for significant letters.
Recall that significant letters project to corner points of the
polygon $L$, while edge letters project to other boundary points and
interior letters project to the interior.  That is, for an edge letter
$u$ and an interior letter $v$ with projections in the sector between
$\A_i$ and $\A_{i+1}$, we have $q\U=p\A_i+r\A_{i+1}$ and
$q'\V=p'\A_i+r'\A_{i+1}$ such that $q=p+r$ while $q'>p'+r'$.

Consider the subword replacements $u^{kNq} \to 
a_{i}^{kNp} a_{i+1}^{kNr}$, 
or $v^{kNq'} \to a_{i}^{kNp'} a_{i+1}^{kNr'}$.
As above, the new paths  reach the same endpoint in $\M$ while
either preserving or reducing the total spelling length of the path,
gaining area by an amount proportional to $k^2$, and reducing boost
by an amount proportional to $k$.  
We perform these replacements in every instance where
 $k$  is large enough to produce is a net height increase; 
and note that the height change is a multiple of $N$.

Repeat the reordering and the replacement steps one after the other until 
neither can be performed any further.
Then $z(\gamma_1)\ge z(\gamma)$, and they differ by a linear
combination of the wedges; namely, $\Delta z =
z(\gamma_1)-z(\gamma)=\sum_{u,v} k_{uv} (\U\wedge \V)$, for integers
$k_{uv}\ge 0$ with $k_{uv}\equiv 0 \pmod N$.
At this stage, the path $\gamma_1$ has well-defined sides with mostly $a_i$ letters
and only boundedly many exceptions.

Next, we set things up to push the remaining ``out of place" letters to the corners so that the 
$a_i$ side is mostly a single long block of the $a_i$ letter.
So far we  have a spelling $\gamma_1$ that contains boundedly many non-significant
letters and boundedly many significant letters on the wrong side. 
Consider a side which consists of significant
generator $a_i$ with a bounded number of letters which are not $a_i$.
For each letter $u$ on the $a_i$ side, the sign of $\U\wedge \A_i$ tells
us whether replacing $a_i u$ with $ua_i$ is height-increasing or height-decreasing
(note that the case $\U\wedge\A_i=0$ is the case that $u$ and $a_i$ commute).
We swap each $u$ past $a_i^N$ in the height-increasing direction (or an arbitrary direction
if they commute)
until we create a block $w=u'a_i^m u$ with $m<N$.  This $w$ itself can be commuted
with $a_i^N$ to the left or right, not decreasing height.  
Since there are boundedly many out
of place letters on each side, this process ends with all these
letters within a bounded distance of a corner, so we merge them with the corner words.  
At each move we have
increased height by a multiple of $N$.

Finally, we balance the side lengths of
$\gamma_1$.  To do this we apply the balancing lemma (Lemma~\ref{lem::balancing})
to the lattice of integer tuples which differ from 
the original $(t_\II,t_{\II+1},\dots,t_\JJ)$ by multiples of $N$
in each coordinate.  This ensures that area and boost, and therefore height, changes
by a multiple of $N$.

This final step has produced a modified path, again called $\gamma_1$, which 
still has the same $(a,b)$ endpoint as $\gamma$ and may have 
higher height by a multiple of $N$.  Now there 
are  bounded-size exceptional corner
words between the sides, and the exponents of significant 
blocks differ only by a bounded amount, so this is the evaluation of a simple shape.
\end{proof}

\section{General shapes and unsimplification}

\subsection{General shapes}

Beyond simple shapes, we will need a construction of shapes with 
break words not only at the corners: 
runs of significant generators can be separated by finitely many other break words.

\begin{definition}
Given a generating set $S$ for which the isoperimetrix has $\sides$ sides, 
a {\em general shape} with parameter $K\ge 1$
is a tuple  $\omega=(\II,\JJ,\B,\chi)$, where 
\begin{itemize}
\item $1\le \II,\JJ\le  \sides $ are a starting and ending side;
\item $\B=(b_1,\ldots,b_{\sides })$ is a vector of integers $0\le b_i \le K$; 
\item $\chi$ is a $(K-1)\times \sides $ matrix whose entries are 
break words from $C(K)$.
\end{itemize}

Let $\Shape_K$ be the set of all such shapes, clearly a finite set for
each value $K$.  We will evaluate each shape at a matrix $X\in
M_{K\times\sides}$.  Let $\Lambda: \Shape_K \times M_{K\times \sides}
\to \Z^{\sides}$ be given by
$\Lambda(\omega,X)=(\lambda_1,\ldots,\lambda_{\sides})$, where
$\lambda_i:=\left(\sum_{j=1}^K x_{ji}\right)-b_j$.  Then the {\em
  shape domain} $\Dom_K(\omega)$, for $\omega\in\Shape_K$, is the set
of $K\times \sides $ matrices $X$ of non-negative integers satisfying
a condition on the image of $\Lambda$, namely:
\begin{itemize}
\item  $\lambda_i=\lambda_j$ for all 
$\II<i,j<\JJ$;
\item $\lambda_\II,\lambda_\JJ\le \lambda_i$;
\item $\lambda_t=0$ for the $t$ that are not between $\II$ and $\JJ$;
\item if $\II=\JJ$, then $\lambda_JJ=0$.
\end{itemize}
With slight abuse of notation, we will then write 
$\Lambda:\Shape_K\times \Dom_K \to \Cone$ given by
$\Lambda(\omega,X)=(s^-,s,s^+)$ where $s^-=\lambda_\II$, $s=\lambda_{\II+1}=\dots=\lambda_{\JJ-1}$, 
and $s^+=\lambda_\JJ$.  (Note that the last condition in the definition ensures that 
the map lands in $\Cone_0$ in the $\II=\JJ$ case.)
\end{definition}

It is immediate from this definition that $\Dom_K(\omega)$ is given by
pulling back a rational family under an affine map.

The matrix $X$ is to be thought of as a matrix of {\em run lengths}.
The evaluation of a shape, $\omega(X)$, is the concatenation of the
break words with the runs of significant generator blocks of length
prescribed by $X$.  The $\B$ vector records the failure of the column
sums to be equal, i.e., the failure of the shadow to be balanced in
terms of its side lengths.  (Since its entries are bounded, the
column sums are nearly equal, which means that the spelling will track
close to an isoperimetrix.)  Simple
shapes are a subset of general shapes for which the break words only
appear at the corners.

\begin{remark}
Note that the triple $(a,b,\ell)$ associated to a spelling $\omega(X)$
factors through $\Lambda$.  That is, as $X$ ranges over
$\Dom_K(\omega)$,  the three integers $\Lambda(\omega,X)=(s^-,s,s^+)$ 
determine the horizontal position and the
word length of the evaluation word.  Thus we can regard this as a map
$\omega:\Cone\to\Z^3$ that is affine and injective.  
\end{remark}

\begin{remark}
If significant generators include several options with same projection
and different boost, then we also need $Y$, a matrix specifying for
each side how many of each different boost level get used, and in this
case the evaluation will be $\omega(X,Y)$.  This makes no meaningful
difference anywhere in the argument below.
\end{remark}

\subsection{Unsimplification}

We describe a {\em 2-sided surgery} and a
{\em 3-sided surgery} 
for paths
and then explain how to use them algorithmically
to begin with a path described by a simple shape and produce a path 
ending lower in the same fiber and 
still described by a general shape.
In both of these moves, we will suppose that
$a_1,a_2,a_3$ are successive significant generators and that $p,q,r$
are the values with $\gcd=1$ so that where $q\A_2=p\A_1+r\A_3$.  (In
the special case that $\A_1=-\A_3$ (the parallel case), we have such a
surgery with $p=r=1$, $q=0$.)  Here we describe the surgeries on side
$a_2$.

{\bf 2--sided surgery.}  Here, a subword of the form
$a_1^{s_1}c_1a_2^{s_2}$ is replaced by $a_1^{s_1-3Np}c_1wa_2$, where
$w$ is a permutation of the letters in $a_1^{3Np}a_2^{s_2-1}$.

{\bf 3--sided surgery.}  Here, a subword of the form
$a_1^{s_1}c_1a_2^{s_2}c_2a_3^{s_3}$ is replaced by
$a_1^{s_1-2Np}c_1a_2^{s_2+2Nq}c_2a_3^{s_3-2Nr}$

\begin{figure}[ht]
\begin{tikzpicture}[scale=.4]
\begin{scope}[xshift=14cm]
\filldraw [gray!20] (2,0)--(6,0)--(8,1)--(12,5)--(13,8)--(13,12)--(12,9)--(8,5)--(4,1)--cycle;
\node at (4,0) [below] {$a_1^{s_1}$};
\node at (7,0.5) [below right] {$c_1$};
\node at (10,3) [below right] {$a_2^{s_2}$};
\node at (12.5,6.5) [right] {$c_2$};
\node at (13,10) [right] {$a_3^{s_3}$};
\draw [gray,dashed] (4,1)--(8,1) (8,5)--(12,5)--(12,9);
\node at (10,5) {$a_1^{2Np}$};
\draw (0,0)--(6,0)--(8,1)--(12,5)--(13,8)--(13,15);
\draw [blue,thick] (2,0)--(4,1)--(8,5)--(12,9)--(13,12);
\end{scope}
\begin{scope}[yshift=2cm]
\filldraw [gray!20]  (2,0)--(4,1)--(5,1)--(6,2)--(7,2)--(11,6)--(14,6)--(9,1)--(7,0)--cycle;
\node at (3.5,0) [below] {$a_1^{s_1}$};
\node at (8,.5) [below right] {$c_1$};
\node at (11.5,3.5) [below right] {$a_2^{s_2}$};
\draw (0,0)--(7,0)--(9,1)--(16,8);
\draw [dashed] (4,1)--(10,7)--(15,7);
\node at (12.5,7) [above] {$a_1^{3Np}$};
\draw [gray,dashed] (5,1)--(9,1);
\draw [blue,thick] (2,0)--(4,1)--(5,1)--(6,2)--(7,2)--(11,6)--(14,6);
\end{scope}
\end{tikzpicture}
\caption{Examples of 2-sided and 3-sided surgery with corners.  If the length $s_2$ of the second side is long enough,
then 2-sided surgery can make a larger change to area because it is thicker:  the width of the surgery is proportional to $3N$ rather than $2N$.}
\end{figure}
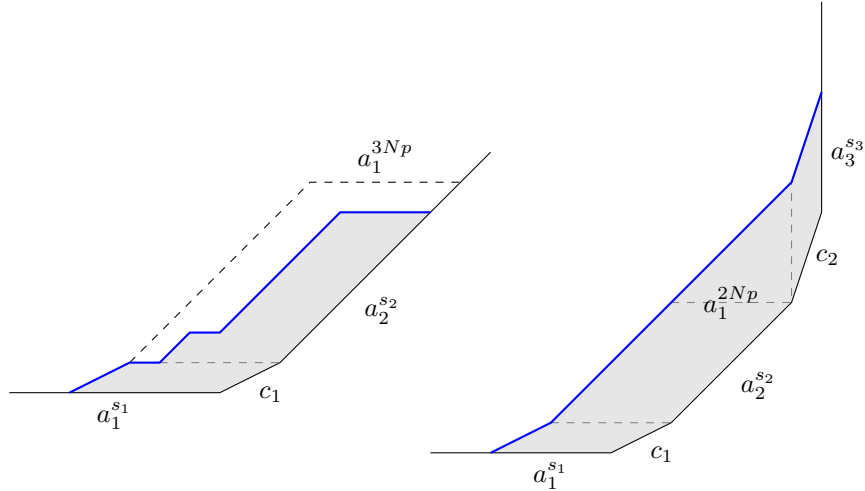

\begin{lemma}[Unsimplification for shapes]\label{lem::unsimp-shape}
Given a starting word $\gamma$, let $\gamma_1$ be the simplification described above and suppose the 
height difference $\Delta z = z_1-z_0$ is sufficiently large.  
Then for any full side of $\gamma_1$,  
a sequence of (possibly zero) $3$-sided surgeries on that side 
followed by at most one 
$2$-sided surgery on that side produces a word $\gamma_2$ which evaluates to the same group element as $\gamma$.
\end{lemma}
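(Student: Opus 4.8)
The plan is to run a careful accounting argument showing that, starting from a single full side of $\gamma_1$, the surgeries can be applied to remove exactly the excess height $\Delta z$ while fixing the evaluated group element. First I would observe that $\gamma_1$ is the evaluation of a simple shape, so a ``full side'' of $\gamma_1$ means a run of the form $c_{i-1} \sdot \aa_i^{m_i} \sdot c_i$ with $m_i$ large compared to $K$ (since $\Delta z$ large forces, via the simplification and balancing construction, that the generic side-lengths are large). Fix such a side $a_2$ with its two neighbors $a_1$, $a_3$ on either side, and let $p,q,r$ be the integers with $\gcd = 1$ and $q\A_2 = p\A_1 + r\A_3$, taking $q = 0$, $p = r = 1$ in the parallel case. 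The key point is that \emph{both} the $3$-sided surgery and the $2$-sided surgery on side $a_2$ preserve the horizontal endpoint $(a,b)$: for the $3$-sided surgery this is immediate from $q\A_2 = p\A_1 + r\A_3$, and for the $2$-sided surgery it is immediate because $w$ is merely a permutation of the letters of $a_1^{3Np} a_2^{s_2-1}$, so $\pi$ of the replaced subword is unchanged. Since the evaluated group element of a spelling is determined by $(a,b)$ together with the height $z$, and the height is boost-plus-balayage-area, it then suffices to track how each surgery changes height.

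Next I would compute the height decrements. A $3$-sided surgery replaces $a_1^{s_1} c_1 a_2^{s_2} c_2 a_3^{s_3}$ by $a_1^{s_1-2Np} c_1 a_2^{s_2+2Nq} c_2 a_3^{s_3-2Nr}$: the boost changes by a fixed linear amount (a multiple of $N$, by definition of $N$), and the balayage area changes by a quantity that is \emph{linear} in the side-lengths (the ``thickness'' $2N$ times a length roughly equal to $s_2$), again a controlled multiple of $N$ because all wedges $\U\wedge\V$ divide $N$. So each $3$-sided surgery decreases $z$ by a positive multiple of $N$ whose size is proportional to the current length of side $a_2$; crucially, since the thickness of a $3$-sided surgery is $2Np$ while that of a $2$-sided surgery is $3Np$, a single $2$-sided surgery at the end gives a strictly larger area change, and (here is the arithmetic I would verify) the residues achievable by some number of $3$-sided surgeries plus at most one $2$-sided surgery hit every value in the appropriate congruence class — in particular they hit $\Delta z$ exactly, since $\Delta z$ is a multiple of $N$ by the simplification lemma. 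The inequality $0 \le s^\pm \le s$ and the boundedness of the break words are preserved throughout: $3$-sided surgery keeps the three affected exponents large and only shifts them by bounded multiples of $N$, and the single concluding $2$-sided surgery absorbs a bounded remainder into the corner word $c_1 w a_2$, which has length $O(N) = O(K)$, so $\gamma_2$ is still the evaluation of a general shape with parameter $K$.

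The main obstacle I anticipate is the exact-divisibility bookkeeping: I must show that the \emph{set} of height-decrements realizable by ``$k$ threes then at most one two'' on a single side is exactly $\{jN : 0 \le j \le J_{\max}\}$ for a sufficiently large range, rather than merely a congruence class missing finitely many small values, because the statement of the lemma demands that $\gamma_2$ evaluate to \emph{the same} group element as $\gamma$ (i.e. that we can remove the full $\Delta z$), not merely to something boundedly close. Two features make this work and I would use both. First, the $3$-sided surgery's height change is proportional to the current side length, so after one surgery the side is shorter and the next surgery removes slightly less — giving a finely-spaced ``staircase'' of reachable heights rather than a coarse lattice — so consecutive reachable heights differ by $O(N)$, not by something growing. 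Second, the $2$-sided surgery, being thicker, lets me fine-tune the last step to land on the target; I would check that the gap $3Np - 2Np = Np$ of extra thickness is enough to cover any residue left after the $3$-sided phase. If a single side does not have enough ``room'' (i.e. $\Delta z$ exceeds what surgeries on one side can remove), that is exactly where the hypothesis ``for any full side'' combined with iterating over several sides enters — but for this lemma, which concerns one side, I would simply carry the constraint that $\Delta z$ is at most the per-side capacity, which the context (highest-height versus the competing shape's height, both tracking the same isoperimetrix arc) guarantees. I would also double-check that straightening the corners $c_1, c_2$ into the surgery — the gray dashed regions in the figure — contributes only a bounded, $\T$-independent area correction, exactly as in the straightening step of the Balancing Lemma.
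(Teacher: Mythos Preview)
Your overall strategy---use repeated $3$--sided surgeries to bring the height difference into range, then a single $2$--sided surgery to land exactly---matches the paper's.  But your accounting of the two surgeries has real gaps, and in particular you have missed the mechanism that makes the exact landing possible.

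The crucial point you do not exploit is that the $2$--sided surgery carries a \emph{free integer parameter}.  The replacement word $w$ is an arbitrary permutation of $a_1^{3Np}a_2^{s_2-1}$, and different permutations produce different area changes: explicitly, the area change is $(2SS)_k = 3Np(\A_1\wedge\C_1)+k(\A_1\wedge\A_2)$ for any integer $0\le k\le 3Np(s_2-1)$.  Thus a single $2$--sided surgery realizes \emph{every} multiple of $\A_1\wedge\A_2$ in the interval $[(2SS)_0,(2SS)_{\max}]$.  Since $\Delta z$ is a multiple of $N$ (by the simplification lemma) and hence of $\A_1\wedge\A_2$, the exact-landing problem becomes purely a range question: one must show that after enough $3$--sided surgeries the remaining $\Delta z$ lies in that interval.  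The paper does this cleanly: perform $(3SS)$ until $\Delta z<(2SS)_0+(3SS)$, and then observe that $(2SS)_{\max}-(2SS)_0>(3SS)$ once $s_2$ is large, precisely because the $s_2$--coefficient on the left is $3Np(\A_1\wedge\A_2)$ while on the right it is $2Np(\A_1\wedge\A_2)$.  So your intuition that the thickness gap $3Np-2Np$ is what makes things work is correct, but the way it is used is to compare the \emph{range} of the $2$--sided surgery against the \emph{step size} of the $3$--sided surgery, not to ``cover residues.''

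Two of your subsidiary claims are also off.  First, a $3$--sided surgery replaces $a_2^{s_2}$ by $a_2^{s_2+2Nq}$, so the $a_2$ side becomes \emph{longer}, not shorter; your ``finely-spaced staircase'' picture is therefore backwards (and in any case unnecessary once you use the parameter $k$).  Second, the output of the $2$--sided surgery is not a short corner word: $w$ has length $3Np+s_2-1$, which is unbounded.  What makes $\gamma_2$ a general shape is rather that $w$ can be written with at most $3Np$ single-letter break words $a_1$ interspersed among runs of $a_2$---boundedly many break words of bounded length, not one bounded break word.
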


Note that if there are fewer than three sides (so that there is no well-defined ``full side"), then we 
can appeal to the unstable (pattern) case presented in the next section.

\begin{proof}
The change in area for each application of three-sided surgery equals 
$$(3SS):=2Np(\A_1\wedge \C_1)+2Nps_2(\A_1\wedge\A_2)+2N^2pr(\A_1\wedge\A_3)+2Nr(\C_2\wedge \A_3).$$
We note that since the wedges are all integers, this is divisible by $N$ and therefore also by $\A_1\wedge\A_2$.

On the other hand, the area difference from performing two-sided surgery depends on the permutation parameter; the area change equals
$$(2SS)_k:=3Np(\A_1\wedge \C_1) + k(\A_1\wedge\A_2),$$
where $k$ is an arbitrary integer, $0\le k\le 3Np(s_2-1)$.  

The lemma's assumption that the height difference is large enough can be taken to precisely mean that 
$\Delta z = z_3-z_0>(2SS)_0$.

Perform (3SS) repeatedly, updating $z_3$ each time, until
$$\Delta z < (2SS)_0+(3SS).$$ 
Then we must show that there exists $k$
such that $\Delta z =(2SS)_k$.  We know that $\Delta z$ is a multiple
of $N$ and therefore of $\A_1\wedge\A_2$.  On the other hand, $\Delta
z $ is greater than $(2SS)_0$, and $(2SS)_k$ achieves all multiples of
$\A_1\wedge\A_2$ past that threshold and up to its maximum.  Thus it
is enough to show that $\Delta z <(2SS)_{\rm max}$.  Since we saw
above that $\Delta z <(2SS)_0+(3SS)$, this amounts to showing that
$(2SS)_{\rm max}-(2SS)_0>(3SS)$.  Since all the wedges of vectors and
the values $p,q,r$ are fixed by the choice of side, it suffices to
take $s_2$ sufficiently large: since the left-hand side has a term
$3Nps_2(\A_1\wedge\A_2)$ and the right-hand side has a term
$2Nps_2(\A_1\wedge\A_2)$, eventually the difference between these
overwhelms all the other fixed terms.
\end{proof}

\section{Patterns}

Recall that \CC geodesics are classified into two kinds (see
Sec.~\ref{sec::CC}), regular and unstable. In a particular fiber
$(a,b,*)$, only unstable geodesics reach positive heights below a
certain threshold height and only regular geodesics reach above that
level.  We will consider the corresponding situation for word
geodesics.

We defined $W= W(a,b)$ to be the highest height reached by a spelling
path of length $n_0 = |(a,b)|_{\pi(S)}$.  In each fiber $\{(a,b,*)\}$,
the general shapes defined in the previous section will reach the
elements  $\{(a,b,c) : c > W\}$, which may be called
{\em regular} elements.  In this section we turn to the growth of the
{\em unstable} elements. Here we will consider the
unstable elements $\{(a,b,c) : 0 \le c \le W\}$ at non-negative heights.  
(Later, we will appeal
to the map $g\mapsto g^{-1}$ which carries $(a,b,c)$ to $(-a,-b,-c)$
to deal with the negative heights.)

\begin{definition} A {\em pattern} is a tuple $\W=(\II, c_1,c_2,c_3)$,
where each $c_i\in C(K)$ is a break word (a string of length at most $K$),
and $1\le \II\le \sides$ picks out a sector between successive significant
directions $\A_\II$, $\A_{\II+1}$.  The (finite) set of all such patterns will be denoted
$\Patt_K$.
 Each pattern $\W$ gives a map $\N^2 \to S^*$ via 
$\W(n_1,n_2) = c_1 a_\II^{n_1} c_2 a_{\II+1}^{n_2}c_3$.
\end{definition}

\begin{lemma}[Simplifying to a pattern]
Let $(a,b)$ lie in the sector between $\A_\II$ and $\A_{\II+1}$, and let $N$
be the lcm of all possible area swaps, as usual.  
Then there is $K=K(S)$ with the following property.
If $\gamma$ is a geodesic for an unstable element $(a,b,c)$, then there is a pattern
$\W\in\Patt_K$ and $n_1,n_2\in \N$ such that 
the spelling path $\tau = \W(n_1,n_2)= c_1 a_\II^{n_1} c_2
a_{\II+1}^{n_2}c_3$ has the following properties:
\begin{itemize}
\item the paths $\tau$ and $\gamma$ have the same length; 
\item the path  $\tau$  evaluates to an  element $(a,b,c+kN)$
with $k\ge 0$ and $c+kN\le W+K(n_1+n_2)$; and
\item the letters $a_\II$ and $a_{\II+1}$ are the highest-boost generators projecting to
$\A_\II$ and $\A_{\II+1}$, respectively.
\end{itemize}
\end{lemma}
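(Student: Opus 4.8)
The plan is to take a geodesic $\gamma$ for the unstable element $(a,b,c)$ and massage it---without changing its length and without decreasing its height---into a spelling of the prescribed pattern form. Since $(a,b,c)$ is unstable we have $0\le c\le W=w_{n_0}(a,b)$, so Corollary~\ref{cor::boundedLengthDifference} gives $n_0\le\ell(\gamma)=|(a,b,c)|\le n_0+K$; and $n_0=|(a,b)|_{\pi(S)}$ exceeds $\normL{(a,b)}$ by at most a constant (each generator has $L$--norm at most $1$, so $\normL{(a,b)}\le n_0$, and the reverse bound is the easy planar case of bounded difference). If $\ell(\gamma)$ is bounded we simply take $c_1=\gamma$, $c_2=c_3=e$, $n_1=n_2=0$; so assume $\gamma$ is long.

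The key new geometric input is that the shadow of $\gamma$ is concentrated on the two significant directions $\A_\II,\A_{\II+1}$. Let $\phi$ be the linear functional equal to $1$ on the edge $e=[\A_\II,\A_{\II+1}]$ of $L$ and at most $1$ on $L$; since $(a,b)$ lies in the cone over $e$ we have $\normL{(a,b)}=\phi(a,b)$. Writing $\gamma$ as a concatenation of its letters' projections $d_j$, we get $\phi(a,b)=\sum_j\phi(d_j)$, hence
$$\ell(\gamma)-\normL{(a,b)}=\sum_j\bigl(1-\phi(d_j)\bigr),$$
a sum of nonnegative terms that is at most $K$. A term vanishes exactly when the letter projects onto $e$; every other letter---every interior letter, every edge letter on a different edge, and every significant generator other than those projecting to $\A_\II$ or $\A_{\II+1}$---contributes at least some fixed $\delta=\delta(S)>0$, so at most $K/\delta$ letters of $\gamma$ project off $e$. (If $(a,b)$ lies on the ray through $\A_\II$, apply this with both edges of $L$ at $\A_\II$ and conclude that $\gamma$ is a power of $a_\II$ up to $O(K)$ letters; so we may assume $(a,b)$ is interior to its sector.)

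Now run the operations from the proof of Lemma~\ref{lem::simplification}: reorder the letters of $\gamma$ cyclically (fixing $(a,b)$ and the boost, not decreasing the height); cash in long runs of the edge letters lying on $e$ for blocks of $a_\II$ and $a_{\II+1}$ (same length and endpoint, height raised by a multiple of $N$); push the remaining $O(K)$ exceptional letters, in the height--nondecreasing direction and in blocks of $N$, into the three corners; and finally replace all but a bounded remainder $r<N$ of the occurrences of each lower--boost significant generator by the highest--boost one of its direction, choosing $r$ from a bounded residue system so that the induced boost change is a multiple of $N$ and absorbing the $r$ leftover letters into a corner word from $C(K)$. The result is $\tau=c_1a_\II^{n_1}c_2a_{\II+1}^{n_2}c_3=\W(n_1,n_2)$ for $\W=(\II,c_1,c_2,c_3)\in\Patt_K$, with $\ell(\tau)=\ell(\gamma)$, with $z(\tau)=c+kN$ for some $k\ge0$ since every move is height--nondecreasing by a multiple of $N$, and with $a_\II,a_{\II+1}$ the highest--boost generators of their directions---the first two bullets and the third.

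It remains to bound $z(\tau)=c+kN$ above by $W+K(n_1+n_2)$. Since $\tau$ has length $\ell(\gamma)\le n_0+K$ and reaches $(a,b)$, the definition of $w$ gives $z(\tau)\le w_{\ell(\tau)}(a,b)\le w_{n_0+K}(a,b)$ for free. A highest--height spelling of any length $n_0+j$, $0\le j\le K$, over $(a,b)$ is a simple shape by Proposition~\ref{prop::simpleshapes}, and the functional argument applies to it verbatim, so it too has the form $a_\II^{m_1}a_{\II+1}^{m_2}$ up to $O(K)$ exceptional letters with $m_1\A_\II+m_2\A_{\II+1}$ equal to $(a,b)$ up to $O(K)$; hence the run lengths realizing $w_{n_0+K}(a,b)$ and those realizing $W=w_{n_0}(a,b)$ agree up to $O(K)$. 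As the height of such a spelling is $\tfrac12 m_1m_2(\A_\II\wedge\A_{\II+1})+m_1z(a_\II)+m_2z(a_{\II+1})+O(K\,n_0)$---quadratic in $(m_1,m_2)$ with leading term the area---perturbing $(m_1,m_2)$ by $O(K)$ perturbs the height by $O(K\,n_0)$. Thus $w_{n_0+K}(a,b)-W=O(K\,n_0)$, and since $n_0\le n_1+n_2+3K$ this yields the claimed bound after a final enlargement of $K$. The crux of the whole argument is exactly this estimate: one must know that the highest height at the \emph{minimal} admissible length is only $O(K\,n_0)$ below the highest height at length $n_0+K$, which rests on both being governed by the same quadratic function of two run lengths that are pinned down by $(a,b)$ up to bounded error; the rest is the bookkeeping of Lemma~\ref{lem::simplification} carried out with only two significant directions in play.
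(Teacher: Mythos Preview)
Your proof is correct and follows essentially the same approach as the paper: your linear-functional argument is exactly the paper's ``orthogonal projection to the normal of that edge'' argument, the simplification is Lemma~\ref{lem::simplification} specialized to two significant directions, and your explicit quadratic estimate $w_{n_0+K}-W=O(Kn_0)$ at the end unpacks the paper's one-line fellow-traveling justification for $|z(\tau)-W|\le K(n_1+n_2)$. If anything you are slightly more careful than the paper in spelling out why the highest-height spellings at lengths $n_0$ and $n_0+K$ have nearby run lengths and hence nearby heights.
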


\begin{proof}
First note that 
if that $\gamma$ is a geodesic for an unstable element $(a,b,c)$
where $(a,b)$ lies in the sector between $\A_i$ and $\A_{i+1}$, then
all but boundedly many letters in $\gamma$ project to convex combinations
of $\A_i$ and  $\A_{i+1}$ .
This is because, 
by Bounded Difference (Corollary~\ref{cor::boundedLengthDifference}), there is
$K$ such that if $(a,b,c)$ is unstable, then $n_0 \le |(a,b,c)| \le
n_0+K$, so that the projection $\pi(\gamma)$ must reach $(a,b)$ in at
most $n_0+K$ letters.  This means $\pi(\gamma)$ can only use boundedly many letters
that are not on the edge between those points (i.e., convex combinations of
$\A_i$ and $\A_{i+1}$)---to see this, just consider 
orthogonal projection to the normal of that edge, so every time any other letter
is used, the projection falls behind by a definite amount.

We now carry out the simplification procedure used above
(Lemma~\ref{lem::simplification}), making a few extra observations as
we go.  We note that the length of the path in this case will be
maintained and not shortened, because there are only boundedly many
interior letters and so we need not cash them in for significant
letters.

If $a_i$ is the highest-boost lift and $a_i'$ is another letter
projecting to $\A_i$, then we can replace any 
$(a_i')^N$ by $a_i^N$.  The remaining (boundedly many) $a_i'$, which commute
with $a_i$, can be pushed to the corner position.

Finally, $\pi(\tau)$ fellow-travels the $L$--norm geodesic
$\A_\II^{n_1}\A_{\II+1}^{n_2}$, and therefore fellow-travels any geodesic achieving
$(a,b,W)$, so
$|z(\tau)-W|$ is bounded by a constant multiple of  $n_1+n_2$.
We enlarge $K$ if necessary to complete the lemma.
\end{proof}

On the other hand, by controlled rearrangement of letters, patterns
can produce a range of group elements in the same fiber.

\begin{definition} For a pattern $\W=c_1 a_1^{n_1} c_2 a_2^{n_2} c_3$
evaluating to $(a,b,c)$, define a process of rearrangements as
follows.  Consider letters $b_1,\dots,b_k$ appearing in the word
$c_2$.  For $j=1,\dots,k$, let $d_j = \frac{N}{\A_{\ii+1} \wedge
  \B_j}$, so that commuting $a_{\II+1}^{d_j}$ through $b_j$ decreases
height by $N$.  We greedily perform commutations to move $a_{\II+1}$
letters past $c_2$, then continue if possible by commuting groups of
$a_{\II+1}$ letters through $a_\II$ letters.  Consider the set of
$(a,b,c')$ achievable by this process for which $0\le c'\le W$, and
let the {\em height interval} of the pattern, denoted $\III_\W(a,b)$,
be the $z$ coordinates in this set. Note that by
  construction $\III_\W(a,b)$ is the intersection of an interval with
  a residue class.
\end{definition}

For example, for the generators $\{a,b,A,B\}^\pm$  described above
in Example~\ref{ex::redundo-gens},
if $\W=aA^*aB^*b$, then $\III_\W(52,131)=\{6,106,206,\dots,3406\}$.  
Here $W(52,131)=3406$ and $N=100$.

\begin{lemma}[Unsimplification for patterns]\label{lem::unsimp-patt}
Let $\W(n_1,n_2)$ evaluate to $(a,b,C_\W)$, and define $C'_\W=\max{\III_\W}$ 
and $C''_\W=\min{\III_\W}$, so that $C_\W,C_\W',C_\W''$ are functions
of $n_1,n_2$ (or equivalently of $a,b$) representing the possible heights of rearrangements of
patterns.
Then there is a partition of $\N^2$ given by finitely many linear equations,
inequalities, and congruences such that $C_\W,C_\W',C_\W''$ are
given by quadratic polynomials in $n_1,n_2$ on each set in the partition.
Therefore there is a corresponding partition of $\M$ so that these heights
are quadratic on each piece on which $\III_\W\neq \emptyset$.
\end{lemma}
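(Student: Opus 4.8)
The plan is to treat the three functions $C_\W$, $C'_\W=\max\III_\W$ and $C''_\W=\min\III_\W$ in turn, show that on the pieces of a polyhedral partition of $\N^2$ each is a quadratic polynomial in $(n_1,n_2)$, and then transport that partition to $\M$ through the affine map $(n_1,n_2)\mapsto(a,b)=\pi(c_1)+n_1\A_\II+\pi(c_2)+n_2\A_{\II+1}+\pi(c_3)$, which is injective since $\A_\II$ and $\A_{\II+1}$ are independent. First, $C_\W$: the height of $\W(n_1,n_2)$ is its boost plus the balayage area of its shadow, the boost is linear in $(n_1,n_2)$, and the area is a sum of determinants $\U\wedge\V$ of consecutive vertices of the shadow, each affine in $(n_1,n_2)$, so $C_\W$ is a quadratic polynomial globally on $\N^2$; summing those determinants over the collinear runs $\A_\II^{n_1}$ and $\A_{\II+1}^{n_2}$ shows the area accrues only linearly within each run, so the one surviving quadratic monomial is $n_1n_2$, with coefficient $\tfrac12(\A_\II\wedge\A_{\II+1})$. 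The same computation applied to the fully rearranged word — all $a_{\II+1}$ letters moved to the front, past $c_2$ and then past $a_\II^{n_1}$ — gives the lowest height $\mu=\mu(n_1,n_2)$ that the rearrangement process defining $\III_\W$ can attain: again a quadratic polynomial, now with leading monomial $-\tfrac12(\A_\II\wedge\A_{\II+1})\,n_1n_2$ up to congruence-dependent affine corrections (the $a_{\II+1}$'s clear each letter of $c_2$ only in blocks). Each step of that process lowers the height by $N$ (a block past a letter of $c_2$) or by $\A_\II\wedge\A_{\II+1}$ (one $a_{\II+1}$ past one $a_\II$), and the latter divides $N$, so the heights reached form $[\mu,C_\W]\cap(C_\W+\delta\Z)$ for a fixed modulus $\delta=\delta(\W)$ dividing $N$ — the ``interval $\cap$ residue class'' of the definition.

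Next I claim that $W=W(a,b)$, read as a function of $(n_1,n_2)$, is a quadratic polynomial on the pieces of a polyhedral partition of $\N^2$, and that its quadratic part is \emph{also} $\tfrac12(\A_\II\wedge\A_{\II+1})\,n_1n_2$. Piecewise quadraticity follows from the structure of highest-height spellings (Proposition~\ref{prop::simpleshapes}, Lemma~\ref{lem::comb-types}, Corollary~\ref{cor::boundedLengthDifference}): such a spelling fellow-travels an $\isoper$-arc from $\zero$ to $(a,b)$ whose scale is linear in $(a,b)$ and $n_0=|(a,b)|_{\pi(S)}$ on each combinatorial type, with $n_0$ piecewise linear in $(a,b)$, the enclosed area is quadratic in the scale, and the bounded corners and run-deviations contribute only congruence-dependent constants. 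For the quadratic part, observe that to leading order $W$ equals $c_0(a,b)$ of Lemma~\ref{cc-types}, which is the area $\tfrac12(\A_\II\wedge\A_{\II+1})\,n_1n_2$ enclosed by the extremal $L$-geodesic $\A_\II^{n_1}\A_{\II+1}^{n_2}$. Consequently $C_\W-W$ has vanishing quadratic part — affine on each piece — so $\{C_\W\le W\}$ is a polyhedral family; and since $\mu=-\tfrac12(\A_\II\wedge\A_{\II+1})\,n_1n_2+(\text{affine})$, the region $\{\mu\ge 0\}$ has the form $\{(n_1-p)(n_2-q)\le s\}$ for constants $p,q,s$, which inside $\N^2$ is confined near the two coordinate axes, where the inequality restricts to an affine condition, so it too is a polyhedral family.

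Now assemble: refine $\N^2$ by the partition on which $W$ is quadratic, by the residue classes of $(n_1,n_2)$ modulo $\delta$, and by the polyhedral sets $\{C_\W\le W\}$ and $\{\mu\ge0\}$; on each resulting piece $C_\W$, $\mu$ and $W$ are quadratic polynomials and $C_\W\bmod\delta$, $W\bmod\delta$ are constant. There $C_\W$ is its quadratic polynomial, and $\III_\W=(C_\W+\delta\Z)\cap[\max(0,\mu),\min(W,C_\W)]$, nonempty precisely on the polyhedral subset where $\max(0,\mu)\le\min(W,C_\W)$ and the progression meets that interval. On that subset $C''_\W$ is $\max(0,\mu)$ rounded up into the progression — that is, $\mu$ where $\mu\ge0$ and a constant otherwise — while $C'_\W$ is $\min(W,C_\W)$ rounded down into the progression — that is, $C_\W$ where $C_\W\le W$ and $W$ minus a constant residue correction otherwise. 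Each of these is a quadratic polynomial in the respective subcase, and pulling the partition of $\N^2$ back through the affine map produces the corresponding partition of the relevant region of $\M$, on whose pieces $C_\W,C'_\W,C''_\W$ are quadratic wherever $\III_\W\ne\emptyset$.

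The main obstacle is the claim about $W$: that it is genuinely piecewise quadratic and shares its quadratic part with $C_\W$. The remainder is determinant bookkeeping plus the closure properties of polyhedral families, but this coincidence of quadratic parts — an instance of the linear comparison phenomenon — is precisely what keeps the comparison regions $\{C_\W\le W\}$ and $\{\mu\ge0\}$ from being curved, hence non-polyhedral, and it relies on the fine description of highest-height spellings from the preceding sections.
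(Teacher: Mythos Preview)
Your proof is correct and follows essentially the same route as the paper: compute $C_\W$ as a global quadratic with leading term $\tfrac12(\A_\II\wedge\A_{\II+1})n_1n_2$, show that $W-C_\W$ is piecewise affine so that $\{C_\W\le W\}$ is polyhedral, and handle $C''_\W$ by analyzing when the lowering process reaches below zero. Where you differ is in the level of detail and the justifications chosen: you introduce $\mu$ explicitly and argue that $\{\mu\ge 0\}$ is polyhedral via a hyperbola-confined-near-the-axes argument, whereas the paper just notes that once $n_2$ is large the process drops below zero (so only finitely many small values of $n_2$ need separate treatment); and for the quadratic part of $W$ you invoke the simple-shape machinery and $c_0(a,b)$, whereas the paper observes more directly that any $W$-realizing spelling must fellow-travel $a_\II^{n_1}a_{\II+1}^{n_2}$ in projection (it is a length-$n_0$ path over $(a,b)$), hence fellow-travels $\W(n_1,n_2)$, which immediately makes $W-C_\W$ linear without the detour.
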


\begin{proof}
Fixing $\W$, the height $C_\W$ can be seen as a function of
$(n_1,n_2)$ whose degree-two term equals $\frac 12 n_1 n_2 (\A_\II
\wedge \A_{\II+1})$, because $\W(n_1,n_2)$ fellow travels the
two-sided figure $\A_\II^{n_1}\A_{\II+1}^{n_2}$.  Fellow traveling
ensures that the enclosed areas differ by at most an amount
proportional to the length of the shape plus the boost provided by
corner words, which are terms of degree one and zero.  

$W$ is the highest height of a minimal-length spelling path reaching 
the shadow of $\W(n_1,n_2)$.  
The simplification argument above shows that 
the spelling path realizing height $W$ must also be boundedly close in projection
to $a_\II^{n_1}a_{\II+1}^{n_2}$, so the difference $W-C_\W$ is a 
linear function as well.  If it is positive, then $C_\W'=C_\W$.  
if it is negative, then $C_\W'$ are given by quadratic polynomials on each 
residue class of $C_\W\pmod N$.

The lowering process can take the pattern all the way down below height zero
as long as $n_2$ is sufficiently large compared to $N$.  If it is not, then 
the quadratic expression for $C_\W''$ in terms of $n_1,n_2$
is given by linear functions of $n_1$ for each small value of $n_2$.  

Finally, the  
$(a,b)$ are linearly related to $(n_1,n_2)$ via $(a,b)=n_1\A_\II + n_2
\A_{\II+1} +\overline{\C}$, where $\overline{\C}$ is the sum of the
corner words, so a change of basis finishes the proof.
\end{proof}

\section{Word geodesics tracking close to \CC geodesics}

\begin{theorem}[Realization by shapes and patterns]\label{thm::shapes}  For every
  generating set $S$, the following two equivalent conditions hold:
\begin{itemize} 
\item there is a $K=K(S)$ such that every group element has a geodesic spelling for which the shadow is $K$--close 
to the shadow of a \CC geodesic;
\item there is a $K=K(S)$ such that every group element has a geodesic  spelling which is 
either the rearrangement of some pattern from $\Patt_K$ 
or the evaluation of some general shape
  from $\Shape_K$.
\end{itemize}
\end{theorem}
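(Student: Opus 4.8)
The plan is to establish the two bullets by first proving the equivalence and then proving the second, more combinatorial statement directly using the machinery assembled in Sections 4--7. For the equivalence: one direction is immediate, since both a pattern's shadow (which fellow-travels a two-sided figure $\A_\II^{n_1}\A_{\II+1}^{n_2}$, an $L$-norm geodesic) and a general shape's shadow (which tracks an isoperimetrix arc because the column sums are nearly equal) are $K'$-close to \CC-geodesic shadows by Lemma~\ref{cc-types} and the classification of \CC~geodesics into regular and unstable. Conversely, a geodesic whose shadow is $K$-close to a \CC~geodesic shadow is close either to an isoperimetrix arc or to an $L$-norm geodesic; in the first case one runs the simplification procedure of Lemma~\ref{lem::simplification} to bring it to a simple shape and then the unsimplification of Lemma~\ref{lem::unsimp-shape} to bring it back down in the fiber to the correct height, while in the second case one uses the pattern simplification and Lemma~\ref{lem::unsimp-patt}. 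So it suffices to prove the second bullet.

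To prove the second bullet, I would split into cases according to the height $c$ of the group element $g=(a,b,c)$ relative to the threshold $W=W(a,b)$ (and reduce the $c<0$ case to $c>0$ via the length-preserving involution $g\mapsto g^{-1}$, $(a,b,c)\mapsto(-a,-b,-c)$, exactly as in Corollary~\ref{cor::boundedLengthDifference}). First suppose $c>W$, the \emph{regular} case. Take any geodesic $\gamma$ for $g$. Apply Lemma~\ref{lem::simplification} to produce $\gamma_1=\omega(s^-,s,s^+)$ for a simple shape $\omega$ with corners in $C(K)$, at no greater length, evaluating to $(a,b,c+kN)$ for some $k\ge 0$. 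If $k=0$ we are done (a simple shape is a general shape). If $k>0$, the height difference $\Delta z = z(\gamma_1)-c$ is a positive multiple of $N$; when it is large enough, Lemma~\ref{lem::unsimp-shape} (3-sided surgeries followed by at most one 2-sided surgery on a chosen full side) produces $\gamma_2$, still described by a general shape, evaluating to exactly $(a,b,c)$ at length $\le\ell(\gamma)$, hence geodesic. When $\Delta z$ is not large enough to run Lemma~\ref{lem::unsimp-shape}, it is bounded by a constant depending only on $S$ (the threshold $(2SS)_0$), so $c$ is within a bounded distance of $z(\gamma_1)$; in that range one reaches $(a,b,c)$ from $\gamma_1$ by appending a bounded-length correction word and absorbing it, at the cost of a bounded length increase which is then reconciled using Bounded Difference (Corollary~\ref{cor::boundedLengthDifference}) and the fact that $\gamma_1$, being an isoperimetrix arc in projection, is itself geodesic. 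For the \emph{unstable} case $0\le c\le W$, take a geodesic $\gamma$; by the pattern-simplification lemma there is a pattern $\W\in\Patt_K$ with $\tau=\W(n_1,n_2)$ of the same length evaluating to $(a,b,c+kN)$ with $c+kN\le W+K(n_1+n_2)$; then apply the height-lowering rearrangement process (the definition of $\III_\W$) together with Lemma~\ref{lem::unsimp-patt} to descend in steps of size at most a bounded constant to height exactly $c$, which lies in $\III_\W(a,b)$ up to absorbing a final bounded correction word, again reconciling any bounded length change via Bounded Difference. Finally, the boundary case $c=W$ (where \CC~geodesics are simultaneously regular and unstable, by Lemma~\ref{cc-types}) is covered by either analysis.

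The main obstacle is the bookkeeping at the interface between the ``large $\Delta z$'' regime, where the surgery lemmas apply cleanly and exactly hit the target height, and the ``small $\Delta z$'' regime, where one must instead append bounded correction words and argue that the resulting bounded-length overshoot does not destroy geodesity. Making this airtight requires invoking Corollary~\ref{cor::boundedLengthDifference} to know that the target element's word length is within a bounded constant of $n_0$ (unstable case) or of $\dCC((a,b,w_n),\zero)\approx n$ (regular case), and then checking that the particular simple shape or pattern we landed on, after the bounded correction, still has length equal to that word length. Everything else --- the cyclic-ordering arguments, the surgeries, the balancing --- is already packaged in the lemmas of Sections 4--7 and can be quoted. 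Care is also needed, when there are fewer than three sides, to route through the pattern (unstable) case as noted after Lemma~\ref{lem::unsimp-shape}, and to handle significant directions with multiple boost levels by carrying along the auxiliary matrix $Y$, which as remarked makes no essential difference.
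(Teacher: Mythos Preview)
Your overall architecture matches the paper's, but there is a genuine gap in the regular (shape) case. You assert that Lemma~\ref{lem::unsimp-shape} produces a $\gamma_2$ ``still described by a general shape.'' The lemma does not say this, and after several 3--sided surgeries it is false: each 3--sided surgery on side $a_{\II+1}$ adds $2Nq$ to that side and subtracts $2Np$, $2Nr$ from its neighbors, so after many iterations the side lengths are no longer $K$--almost balanced and the result falls outside $\Shape_K$ for any fixed $K$.

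The missing idea is that 3--sided surgery \emph{strictly shortens length}. Since $\A_\II,\A_{\II+1},\A_{\II+2}$ are consecutive vertices of the convex polygon $L$, the relation $q\A_{\II+1}=p\A_\II+r\A_{\II+2}$ forces $q<p+r$, so each 3--sided surgery changes length by $2N(q-p-r)<0$. Now the argument is by contradiction: if any 3--sided surgery is used in producing $\gamma_2$, then $\ell(\gamma_2)<\ell(\gamma)=|g|$, which is impossible since $\gamma_2$ evaluates to $g$. Hence at most one 2--sided surgery occurs, and \emph{that} is what keeps $\gamma_2$ in $\Shape_K$ (one extra break word on one side). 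This is exactly how the paper runs it, and your ``length $\le\ell(\gamma)$, hence geodesic'' line skips over the step that actually pins down the shape structure.

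Relatedly, your handling of the small $\Delta z$ regime via appending a bounded correction word and ``reconciling'' with Bounded Difference does not produce a geodesic: you cannot increase length and remain geodesic. The paper's argument is different and simpler: if $\Delta z$ is bounded, then simplification changed $\gamma$ only boundedly (each height-raising move in Lemma~\ref{lem::simplification} contributes at least $N$), so $\gamma$ itself was already within bounded distance of a simple shape in projection. That is the geodesic you want; there is no need to manufacture a new one.
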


\begin{proof}
Suppose $\gamma$ is a geodesic spelling in $(H(\Z),S)$ evaluating to 
 $(a,b,c)\in H(\Z)$. 
 Recall  that $N$ was defined as the least common multiple of the areas spanned by pairs
of letters in the generating alphabet.
Then any single neighboring
generator-swap suffices, if performed enough times, to produce area
changes of any multiple of $N$.
The steps will be organized to ensure that, though the height may
change, it stays in the same residue class modulo $N$.  Throughout, we
will be assuming $n=\ell(\gamma)\gg N$.

First we simplify $\gamma$ to $\gamma_1$ (Lemma~\ref{lem::simplification})
by shuffling letters, cashing in insignificant generators, and balancing lengths.  
  We know that $\gamma_1$ is 
$K$--almost balanced with respect to the induced norm on $\M$.
This means that it has the form $c_{\II-1}a_\II^{n_\ii}c_\II
\cdots a_\JJ^{n_\jj}c_\JJ$ and that for $\II <i <j < \JJ$, the values
$\frac{n_i}{\sigma_i}$ and $\frac{n_j}{\sigma_j}$ differ by at most a
bounded amount and that the values $\frac{n_\ii}{\sigma_\ii}$ and
$\frac{n_\jj}{\sigma_\jj}$ can exceed these by at most a bounded
amount, though of course these values are not necessarily integral.
We can rewrite such a  spelling $\gamma_1$ as
$$\gamma_1 = c'_{\II-1}\aa_\II^{s^-}c'_\II\aa_{\II+1}^s \cdots
\aa_{\JJ-1}^s c'_{\JJ-1}\aa_\JJ^{s^+} c'_{\JJ}.$$
In particular the projection $\pi(\gamma_1)$ fellow-travels a \CC
geodesic.

Depending on the number of sides, we 
next apply unsimplification for shapes or patterns 
(Lemma~\ref{lem::unsimp-shape} or \ref{lem::unsimp-patt}) to obtain $\gamma_2$.
In the pattern case, note that $(a,b,c)$ is geodesically spelled by some 
rearrangement of the pattern $\W$, because the pattern was obtained in the first place by shuffling 
the original spelling.

To complete the proof of the Theorem for shapes, we observe that we are in one of
three cases: either the height difference $z_2-z_0<(2SS)_0$ so that we can not apply
unsimplifcation; the unsimplification process had at least one three-sided
surgery; or unsimplification had only two-sided surgery.  If any
three-sided surgery was performed, then our new spelling $\gamma_2$ 
 evaluates to the same word as $\gamma$ but is
shorter, contradicting geodesity of $\gamma$.  If only two-sided
surgery was needed, then a $\gamma_2$ of equal length to $\gamma$ has
been produced, but with lower eccentricity.  Finally, if $z_2-z_0$ is
smaller than some fixed bound, then the steps in the proof only made minor changes
to $\gamma$, and retracing the argument
this implies that $\gamma$ was  boundedly close to
isoperimetric at the beginning of the process.
\end{proof}

\begin{example} We will run an example to illustrate an 
eccentric word geodesic being improved
by the shape algorithm above.
Consider the standard generators, fix a value $D$ and take $M\gg D$.   
Let $\gamma$ be the closed rectangular path 
$$\E_1^{M-D}\E_2^{M+D}\E_1^{-M+D}\E_2^{-M-D}.$$
This has length $4M$ and encloses area $M^2-D^2$, so it evaluates to the group element
$(0,0,M^2-D^2)$.  The \CC geodesic reaching the same element would have length 
$4\sqrt{M^2-D^2}$, which is strictly greater than $4M-1$ if $M$ is large enough 
compared to $D$, and this means that $\gamma$ is a geodesic.
It is already cyclically ordered and has no out-of-place letters, so 
$\gamma_1=\gamma$.  Balancing the sides produces 
$\gamma_2=\E_1^{M}\E_2^{M}\E_1^{-M}\E_2^{-M}$, which has area $M^2$.
Now we perform a 2-sided surgery, replacing $\E_1^M\E_2^M$ with 
$\E_1^{M-1}\E_2^{D^2}\E_1\E_2^{M^2-D^2}$.  This reduces the area by $D^2$
while preserving length,
so creates a geodesic to $(0,0,M^2-D^2)$
that $1$--fellow-travels the \CC geodesic.
\end{example}

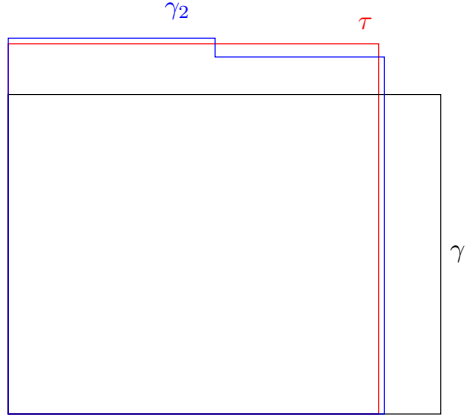
\begin{figure}[ht]
\begin{tikzpicture}[scale=.25]

\draw [red] (0,0)--(19.7,0)--  (19.7,19.7)-- (0,19.7)--cycle;
\node [red] at (19,20) [above] {$\tau$};
\draw (0,0)--(23,0)--  node [right] {$\gamma$} (23,17)-- (0,17)--cycle;
\draw [blue] (0,0)--(20,0)--(20,19)--(11,19)--(11,20)--(0,20)--cycle;
\node [blue] at (9,20.5) [above] {$\gamma_2$};
\end{tikzpicture}
\caption{Here, a word geodesic $\gamma$ with large eccentricity is shown compared to the corresponding \CC
geodesic $\tau$,  which can't be realized with integers.  The algorithm {\em balances} $\gamma$
and then {\em chips away} area to produce
a geodesic $\gamma_2$ which evaluates to the same group element as the original
$\gamma$ but tracks close to $\tau$.}
\end{figure}

\section{Picking out geodesics}\label{sec::competition}

\subsection{Linear comparison for shapes}

We have seen that when $\omega$ is a shape (simple or general), the
map $\Dom_K \to \Z^3$ induced by $\omega$ taking $X\mapsto (s^-,s,s^+)\mapsto
(a,b,\ell)$ is injective and affine.  Therefore, for a given shape
$\omega$, the inverse map $(a,b,\ell)\mapsto \S=(s^-,s,s^+)$ is an
affine function on $\omega(\Dom_K)\subset \Z^3$.

First, we define the {\em domain of competition} for a pair of shapes 
to be the inputs for which they reach the same horizontal position at nearby lengths:
$$\DomComp_K(\omega,\omega') = \left\{(X,X')\in \Dom_K(\omega)\times \Dom_K(\omega') :
|\ell-\ell'| \le K\right\}$$

Define a {\em competition function} $f_{\omega\omega'}:
\DomComp(\omega,\omega')\to \Z$ to be the difference in heights, 
$z(\omega(X))-z(\omega'(X'))$.
We show that if two shapes ever compete, then the domain of competition 
decomposes into rational
families where that height difference is given by a linear function.

\begin{definition} 
Given a general shape $\omega$ of type $(\II,\JJ)$ and data $X$ with
lengths $\S=(s^-,s,s^+)$, we define the {\em trace}
$\tau=\tau(\omega(X))$ to be the corresponding $\isoper$--arc
$$\tau=\AA_\II^{s^-}\AA_{\II+1}^{s}\dots \AA_{\JJ-1}^s \AA_\JJ^{s^+}.$$
\end{definition}

That is, $\tau$ is equal to $\omega(X)$ with the break words deleted
and the exponent differentials erased.  Observe that by construction,
\begin{itemize}
\item the dependence of $\tau$ on $X$ factors through
  $(s^-,s,s^+)$;
\item $\tau$ begins at $\zero\in \M$, and synchronously fellow-travels
  $\omega(X)$ with a fellow-traveller constant which depends only on
  $\omega$ and is independent of $X$; and
\item for a given $\omega$ the difference between the endpoint of $\omega(X)$
  and the endpoint of $\tau$ is independent of $X$, i.e.,
  is constant on $\Dom_K(\omega)$.  This is because this difference depends only on the 
  $\B$ and $\C$ data from
$\omega$.
\end{itemize}

\begin{lemma}[Linear comparison for shapes]\label{linearComparison}
If $\DomComp_K(\omega,\omega')$ is nonempty, then there is a finite
partition such that 
each piece $U_\delta \subset \DomComp_K(\omega,\omega')$ is
defined by linear equations,  linear inequalities, and congruences, and
the comparison function 
$f_{\omega\omega'}\vert_{U_\delta}$ is linear.
\end{lemma}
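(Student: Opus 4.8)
The plan is to reduce the comparison of heights $z(\omega(X)) - z(\omega'(X'))$ to a comparison of areas of the two traces $\tau = \tau(\omega(X))$ and $\tau' = \tau(\omega'(X'))$, and then to exploit the fact that the two traces are subarcs of the \emph{same} polygon $\isoper$ (up to scale and translation) meeting the same endpoint at nearly the same length. First I would write the height of $\omega(X)$ as its boost plus its balayage area; since the boost is $\sum_i (\text{boost of } \aa_i)(s + b_i) + (\text{boost of corners})$, it is an affine function of $(s^-, s, s^+)$, hence — composing with the affine inverse $\omega^{-1}$ — an affine (in fact linear up to a constant) function on $\omega(\Dom_K)$. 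The balayage area of $\omega(X)$ differs from $\Area(\tau)$ by a bounded correction depending only on the $\B, \C$ data (by the third bulleted observation before the lemma, the endpoints differ by a constant vector, and straightening the corners changes area by a constant independent of $X$). So modulo affine functions of $(s^-,s,s^+)$ and $(t^-,t,t^+)$, the competition function $f_{\omega\omega'}$ equals $\Area(\tau) - \Area(\tau')$.

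Next I would control $\Area(\tau) - \Area(\tau')$ using Lemma~\ref{lem::comb-types}. On $\DomComp_K$ the traces $\tau, \tau'$ have combinatorial lengths within $K + O(1)$ (the lengths of $\omega(X), \omega'(X')$ are within $K$, and these differ from the combinatorial lengths of the traces by bounded amounts) and endpoints within a bounded distance (again by the constant-offset observation, since $\omega(X)$ and $\omega'(X')$ reach the same $(a,b)$). Hence their scales differ by at most a constant $K_3$, and after a bounded weight-shift and cyclic permutation — neither of which changes the endpoint or the enclosed area — they $K_4$--fellow travel. Now I would partition $\DomComp_K$ according to the finitely many possible ``shapes of the overlap'': which combinatorial type each trace $K$--almost has, which cyclic permutation aligns them, and on which linear sector of $\M$ the common endpoint $(a,b)$ lies (the sector between two successive vertices of $L$, across which, per Lemma~\ref{lem::comb-types}, the scale is a single linear function of $(a,b,\ell)$). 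Each such stratum is cut out by linear equalities and inequalities in $X, X'$ (pulled back through the affine maps $\Lambda$ and $\omega^{-1}$), together with the congruences already present in $\Dom_K$.

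On each such stratum, the hard computational step is to see that $\Area(\tau) - \Area(\tau')$ is \emph{linear}, even though each area is separately quadratic in the scale. The key point is that both $\tau$ and $\tau'$ are arcs of translates of the \emph{same} dilated isoperimetrix: writing $\tau \subset s\isoper + {\sf q}$ and $\tau' \subset s'\isoper + {\sf q}'$ with the same combinatorial type (after the alignment above), the enclosed area of such an arc, expressed as a sum of triangle determinants $\frac12 \V_i \wedge \V_{i+1}$ with side vectors proportional to the fixed significant directions, is a fixed quadratic form $Q(s)$ in the scale plus lower-order terms linear in $s^-, s^+$. The two arcs reach the same point and have (within a bounded, stratum-constant amount) the same length; since on each stratum the scale $s$ is the \emph{same} linear function of $(a,b,\ell)$ for both arcs (they share the endpoint and the sector, and their lengths agree up to a constant), the two quadratic pieces $Q(s)$ and $Q(s')$ differ only through the bounded, stratum-constant discrepancy in length — so their difference is linear. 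I expect this cancellation of the quadratic parts to be the main obstacle: one must be careful that, although the overall scales $s, s'$ may be large and their difference only bounded, the quadratic form $Q$ is evaluated at two inputs whose difference is bounded by a stratum-constant, so $Q(s) - Q(s')$ is (gradient of $Q$ at $s$) $\cdot (s - s')$ plus a bounded-input correction, i.e.\ affine in the remaining free parameters. Combining this with the affine boost terms and the constant area corrections from the first step yields that $f_{\omega\omega'}\vert_{U_\delta}$ is linear, and since there are only finitely many strata $U_\delta$, the lemma follows.
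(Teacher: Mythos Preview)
Your overall strategy---reduce $f_{\omega\omega'}$ to $\Area(\tau)-\Area(\tau')$ modulo affine terms, then case on combinatorial type via Lemma~\ref{lem::comb-types}---matches the paper's. But the step where you argue the quadratic parts cancel has a genuine gap.

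You write that $Q(s)-Q(s')=\nabla Q(s)\cdot(s-s')+O((s-s')^2)$ is ``affine in the remaining free parameters'' because $s-s'$ is bounded by a stratum-constant. This is false: $\nabla Q(s)$ is linear in $s$, and $s$ is unbounded on $\DomComp_K$, so the product $\nabla Q(s)\cdot(s-s')$ is affine only if $s-s'$ is \emph{constant} on the stratum, not merely bounded. On your strata (combinatorial type, cyclic-permutation class, $\M$-sector) the difference $s-s'$ still varies, because you have not fixed $\delta=\ell-\ell'$.

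The paper closes this in two moves. First, it partitions explicitly by the integer $\delta=\ell-\ell'\in[-K,K]$, which your strata do not record. Second---and this is the point you are missing---for shapes of the \emph{same} combinatorial type $(\II,\JJ)$, the two affine maps $(s^-,s,s^+)\mapsto(a,b,\ell)$ and $(t^-,t,t^+)\mapsto(a,b,\ell+\delta)$ have the \emph{same linear part} (they differ only in the constant contributions of the break words and the $b_i$). Hence so do their inverses, and therefore the entire vector $(s^--t^-,\,s-t,\,s^+-t^+)$ is literally \emph{constant} on each $U_\delta$. With the parameter differences constant, the area between the two fellow-traveling traces is transparently affine in $(s^-,s,s^+)$, and no Taylor argument is needed. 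Your route can be salvaged by further partitioning by the finitely many integer values of $(s^--t^-,s-t,s^+-t^+)$, but that is exactly what the same-linear-part observation gives for free. The ``almost same type'' case (e.g.\ $\II'=\II+1$, $\JJ'=\JJ$ with $s^-$ and $t-t^-$ bounded) is then handled, as in the paper, by absorbing the short bounded pieces into break words and reducing to the equal-type case; your appeal to weight-shifting and cyclic permutation covers only the $(\II,\II)$ situation.

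A smaller slip: the balayage area of $\omega(X)$ does \emph{not} differ from $\Area(\tau)$ by a constant independent of $X$. Along each side the two projected paths run parallel but are offset by a fixed vector (accumulated from the preceding break words and the $b_j$), so the area between them picks up a parallelogram whose area is linear in that side's length. The correction is therefore affine in $(s^-,s,s^+)$, not constant---still harmless for your reduction, but worth stating correctly.
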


\begin{proof} Take $K$ to be the bounded-difference constant 
from Corollary~\ref{cor::boundedLengthDifference}.
We will partition the domain of competition into pieces for each
$-K\le \delta \le K$ consisting of the subset of positions $(a,b)$
reached by $\omega$ at some length $\ell$ and by $\omega'$ at length
$\ell+\delta$.  Call this subset $U_\delta$.  Let $(s^-,s,s^+)$ and
$(t^-,t,t^+)$  denote the length data extracted from $X$ and $X'$
respectively.

Fixing this $\delta$, 
we first consider the case where $\omega$ and $\omega'$ have the same
combinatorial type, that is, $\II=\II'$, $\JJ=\JJ'$.  Further, if
$\II=\JJ$, recall that we have restricted the domain so that $s^+ =
t^+ = 0$.  We claim that the trace  $\tau'$ fellow
travels $\tau$ in projection and that the distance between
corresponding sides is independent of $a$, $b$ and $\ell$.  This is
because the affine maps
\begin{align*}
 (s^-,s,s^+) &\mapsto (a,b,\ell) \\ 
  (t^-,t,t^+)  &\mapsto (a,b,\ell+\delta)
\end{align*}
have the same linear part, hence so do their inverses.  Thus,
$(s^--t^-, s-t, s^+-t^+)$ is constant on the domain of competition, which 
ensures fellow-traveling.

It follows that the area between the traces is linear on $U_\delta$,
as is the area between each of the shapes and its respective trace.
(Area between two planar paths with different endpoints is measured by
closing up with a straight chord.)  Clearly the boost of each shape is
also linear on $\DomComp$. Thus $f_{\omega\omega'}$ is linear for
each value of $\delta$ in the case where $\omega$ and $\omega'$ have
the same combinatorial type.  (Notice that in the case where $\tau$
and $\tau'$ might a priori differ as in the second case of
Lemma~\ref{lem::comb-types}, because of our restriction to
$\Cone_0$, they actually fellow-travel and the argument goes
through.)

Next, consider the case where $\tau$ and $\tau'$ are almost the same
combinatorial type.  In this case $\omega(X)$ and $\omega'(X')$ are
also of almost the same combinatorial type.  For specificity let us
consider the case where $\II+1=\II'$, $\JJ=\JJ'$, and $s^-$ and
$t-t^-$ are both bounded, so that there are only finitely many
possible pairs $(s^-,t-t^-)$.  For any such pair, the subset of
$U_\delta$ realizing that pair is defined by linear equations.  If we
fix those values---i.e., treat $a_\II^{s^-}a_{\II+1}^{t-t^-}$ as a
break word in $\omega(X)$---we can define new traces of the same
combinatorial type and appeal to the case above.

Finally, we turn to the case where $\tau$ and $\tau'$ end close to the 
origin and have different  types.  Here, $\omega$
and $\omega'$ can only compete when $\tau$ and $\tau'$ are close to 
being the full polygon.   But this implies that there are finitely many
values $(a,b)$ for which they
compete.  Furthermore, the set of $(X,X')$ mapping to each of these
finitely many $(a,b)$ is determined by linear equalities and inequalities.  For
each such $(a,b)$, the areas of $\omega(X)$ and $\omega(X')$ differ from
a full isoperimetrix of scale $s$ by amounts which are linear in $X$ and $X'$
respectively.  Thus their areas differ from each other  by amounts which are 
linear in $X$ and $X'$, and once again their respective boosts are also linear
in $X$ and $X'$.  The result now follows.
\end{proof}

\subsection{Testing geodesity for shapes}

Consider the set
$$ \{(a,b,w_n+j) : n\ge n_0(a,b), \quad 1 \le j \le w_{n+1}-w_n\},$$ 
containing the elements of the Heisenberg group in the (positive)
regular range, i.e., the set of $(a,b,c)\in H(\Z)$ with
$c>W=w_{n_0}(a,b)$.  By Bounded Difference
(Cor~\ref{cor::boundedLengthDifference}), such an element
$(a,b,w_n+j)$ has word length between $n+1$ and $n+K$.

Since $\Shape_K$ is a finite set, we can fix an arbitrary ordering of
its elements.

\begin{definition}
For a general shape $\omega$, let 
 $G^\Delta_\omega(n)$ be the set of $(a,b,j)\in \Z^3$ such that $1\le j \le w_{n+1}-w_n$
and $\omega$ is the first shape to geodesically realize $(a,b,w_n+j)$ at length $n+\Delta$.
  \end{definition}

 \begin{Thm}[Deciding geodesity for shapes] For
  each shape $\omega$ and each $0\le \Delta \le K$,
the  $G^\Delta_\omega(n)$ form  a bounded rational family in $\Z^3$.
\end{Thm}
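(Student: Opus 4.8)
The plan is to rewrite every clause in the definition of $G^\Delta_\omega(n)$ as a system of linear equalities, inequalities, and congruences in the coordinates $(a,b,j)\in\Z^3$ with $n$ entering only affinely; then $\{G^\Delta_\omega(n)\}$ is a polyhedral family by the closure properties of such families (union, intersection, complementation, set difference, affine push-forward), and it is automatically bounded, since for each fixed $n$ Bounded Difference (Corollary~\ref{cor::boundedLengthDifference}) confines $(a,b)$ to the finite set $\{|(a,b)|_{\pi(S)}\le n+\Delta\}$ while $1\le j\le w_{n+1}(a,b)-w_n(a,b)$ confines $j$ to a finite range. The one real difficulty is that two of the quantities involved --- the height $z(\omega(X))$ that a shape reaches, and $w_n(a,b)$ --- vary \emph{quadratically} in the relevant parameters, so a literal translation would produce quadratic constraints, which Benson's machinery does not handle directly. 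The point of phrasing the theorem in terms of the increment $j=c-w_n(a,b)$ rather than the absolute height $c$ is that every quadratic quantity we ever need to compare occurs as a \emph{difference of two $\isoper$-arc heights at nearby lengths}, and Linear Comparison (Lemma~\ref{linearComparison}) makes each such difference linear after a finite polyhedral decomposition. The guiding principle throughout is therefore: never record an absolute height, only a height difference, and decompose into pieces before translating.

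First I would reparametrize by shape data. For a general shape $\omega'$ the induced map $\Cone\to\Z^3$, $(s^-,s,s^+)\mapsto(a,b,\ell)$, is affine and injective, so the image $R_{\omega'}\subseteq\Z^3$ of $\Dom_K(\omega')$ under it is a rational set (an affine push-forward of a rational family, with no $n$-dependence), and on the rational family $\{(a,b):(a,b,n+\Delta')\in R_{\omega'}\}$ the triple $(s^-,s,s^+)$ is an affine function of $(a,b,n)$. With $(s^-,s,s^+)$ fixed, $z(\omega'(X))$ is an \emph{affine} function of the run-length matrix $X$: the run--run interactions and the total boost are pinned down by the column sums, and only the positions of break words inside a side still move, contributing a linear term. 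Hence, by affine push-forward, the set $H_{\omega'}(a,b,n)\subseteq\Z$ of heights that $\omega'$ actually realizes over $(a,b)$ at length $n+\Delta'$ is a polyhedral subset whose interval endpoints and congruences have affine-in-$(a,b,n)$ data, except for one overall offset equal to a quadratic ``reference height'' $h_{\omega'}(a,b,n+\Delta')$.

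Now $w_n(a,b)$ equals, by Proposition~\ref{prop::simpleshapes} and Corollary~\ref{cor::boundedLengthDifference}, the maximum over the finitely many simple shapes $\sigma$ of the height $\sigma$ reaches over $(a,b)$ using at most $n$ letters (a bounded length/parity correction is absorbed into a mod-$2$ congruence). Comparing $\omega$ at length $n+\Delta$ with each $\sigma$ puts the pair into $\DomComp_K(\omega,\sigma)$, after enlarging $K$ if necessary, so Lemma~\ref{linearComparison} decomposes the domain into pieces on which $h_\omega(a,b,n+\Delta)$ minus $\sigma$'s height is linear; the regions where $\sigma$ attains the maximum are cut out by finitely many such linear comparisons, so $h_\omega(a,b,n+\Delta)-w_n(a,b)$ is piecewise linear in $(a,b,n)$, and applying the same to the best simple shapes at lengths $n$ and $n+1$ makes $w_{n+1}(a,b)-w_n(a,b)$ piecewise linear. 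Consequently, on each piece of the common refinement the clause ``$\omega$ realizes $(a,b,w_n(a,b)+j)$ at length $n+\Delta$'' becomes: $(a,b,n+\Delta)\in R_\omega$, and $j$ minus the now-linear quantity $h_\omega(a,b,n+\Delta)-w_n(a,b)$ lies in a polyhedral subset of $\Z$ with affine-in-$(a,b,n)$ data --- which unpacks into honest linear inequalities and congruences in $(a,b,j,n)$; and the range bound $1\le j\le w_{n+1}(a,b)-w_n(a,b)$ is polyhedral for the same reason.

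Geodesity and the ``first shape'' clause go the same way. Since $c>W$ the element is regular, so by Theorem~\ref{thm::shapes} it has a geodesic realized by a general shape (patterns miss the regular range), and therefore its word length is the least length at which any shape realizes it; thus ``word length $=n+\Delta$'' says $\omega$ realizes $(a,b,w_n+j)$ at length $n+\Delta$ while no shape $\omega'$ realizes it at any smaller length $n+\Delta'$ (lengths $\le n$ are ruled out by $c>w_n$), and ``$\omega$ is first'' adds that no shape preceding $\omega$ in the fixed order realizes it at length exactly $n+\Delta$. Each clause ``$\omega'$ realizes $(a,b,w_n+j)$ at length $n+\Delta'$'' is $(a,b,n+\Delta')\in R_{\omega'}$ together with ``$j$ minus $h_{\omega'}(a,b,n+\Delta')-w_n(a,b)$ lies in a polyhedral subset of $\Z$'', and the offset splits as $(h_{\omega'}(a,b,n+\Delta')-h_\omega(a,b,n+\Delta))+(h_\omega(a,b,n+\Delta)-w_n(a,b))$, with the first bracket piecewise linear by Lemma~\ref{linearComparison} applied to $\DomComp_K(\omega,\omega')$ and the second piecewise linear as above; on the common refinement of all these finitely many decompositions every such clause is polyhedral, and negation (complementation) together with intersection over the finitely many $\omega'$ and $\Delta'$ preserves this. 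Intersecting all the conditions and unioning over the pieces exhibits $G^\Delta_\omega(n)$ as a bounded rational family in $\Z^3$, ready for Theorem~\ref{thm::rational-counting}. The step I expect to be the main obstacle is precisely this interaction: ``heights realizable by a shape'' form a polyhedral set whose extent is linear but whose location is a quadratic reference height, while Linear Comparison only controls differences of such heights, so the membership test becomes a legitimate linear/congruence condition only after one has passed to the Linear Comparison decomposition for the relevant pairs of shapes (including the simple shapes defining $w_n$) --- decompose first, translate second --- and one must also verify that $w_n$ being a \emph{maximum} over shapes, not a single shape, introduces only finitely many further linear case-splits.
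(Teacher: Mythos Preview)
Your proof is correct and follows essentially the same route as the paper's: both arguments anchor everything to the highest-height simple shapes realizing $w_n$ and $w_{n+1}$, then invoke Linear Comparison (Lemma~\ref{linearComparison}) to turn every height difference appearing in the definition of $G^\Delta_\omega(n)$ into a piecewise-linear condition that Benson's machinery can handle. Your treatment is in fact slightly more explicit at one point the paper elides---you track that a general shape at fixed $(a,b,\ell)$ realizes a polyhedral \emph{set} of heights (offset by your quadratic reference height $h_\omega$), whereas the paper writes the test simply as $j=f_{\omega\beta}(\S,\S_\beta)$ as though a single value were at stake---but the underlying mechanism is identical.
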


\begin{proof}
We will show that membership in $G_\omega^\Delta(n)$ is tested
by finitely many linear equations, linear inequalities, and congruences.  

For a shape $\omega$, consider
$$ A_\omega(n) = \{(a,b) : \text{$\omega$ produces a spelling of
  length $n$ over $(a,b)$} \}.$$
To see that $A_\omega(n)$ is a rational family, recall that $X\mapsto \ell(\omega(X))=n$ is an
affine map.  Thus the sets $\{X \in \Dom_K(\omega) :
\ell(\omega(X))=n\}$ constitute a rational family.  The map $X\mapsto
(a,b)$ is also affine and thus the sets $A_\omega(n)$ are the affine
push-forwards of a rational family and hence themselves rational. 

For each  shape $\omega$ consider
$$H_\omega(n) = \{(a,b) : \text{$\omega$ realizes the highest-height
  element $(a,b,w_n)$ at length $n$} \},$$ which is empty unless
$\omega$ is a simple shape.  We claim that for each simple shape
$\omega$, $H_\omega(n)$ is a rational family.  For each $(a,b) \in
A_\omega(n)$, $\omega$ fails to produce the highest-height element if
 there is $\omega'$ producing a higher element over $(a,b)$ at
length at most $n$.  However, since $w_{n-2} < w_{n}$ this only needs
to be tested for length $n$ and $n-1$. Thus, for each potential
competitor $\omega'$ only two inequalities need to be tested.  But
these are tested by the linear inequality $f_{\omega,\omega'}(X,X')\ge
0$ at $\delta=0$ and $\delta=-1$.  It follows that the sets
$H_\omega(n)$ form a rational family as claimed.

Now for each pair of shapes $\alpha$ and $\beta$, note that 
$$
  H_\alpha(n+1) \cap H_\beta(n)= \{(a,b) : \text{$\beta$
    realizes $w_n(a,b)$ and 
   $\alpha$ realizes $w_{n+1}(a,b)$} \}.
$$
This is a rational family picking out positions at which $\alpha$ is highest-height at length $n+1$
and $\beta$ is highest-height at length $n$.
Given $(a,b)$, we can search the finite list of shapes to find such a pair, and then 
$(a,b,n)$ affinely determine $\S_\alpha$ and $(a,b,n+1)$ determine $\S_\beta$ so that 
$\alpha(\S_\alpha)$ and $\beta(\S_\beta)$ are the highest-height paths.
Thus, we can
test the requirement that $j$ satisfy $1 \le j \le w_{n+1}(a,b) - w_n(a,b)$ using  equations which are linear in our data
by seeing whether there exist shapes $\alpha,\beta$ for which $j\le f_{\alpha\beta}(\S_\alpha,\S_\beta)$ at $\delta=1$.

The requirement that $\omega$ realizes $(a,b,w_n+j)$ at length
$n+\Delta$ is similarly tested by $j=f_{\omega\beta}(\S,\S_\beta)$ 
at $\delta=\Delta$, i.e., by linear
equalities and inequalities.

Finally, for any $\omega$ which realizes $(a,b,w_n+j)$ at length
$n+\Delta$, we must test whether this is geodesic, i.e., whether this length is shortest-possible.  
This is accomplished
by testing all potential competitors $\omega'$ at lengths $\Delta' <
\Delta$.  This is finitely many competitors $\omega'$ and finitely many
values $\Delta'$, and therefore determined by finitely many linear
equalities and inequalities.  

Finally, to see that $\omega$ is the lowest-numbered shape to produce such a geodesic,
we simply check $\omega'<\omega$ at length $\Delta$.  
\end{proof}

\subsection{Linear comparison for patterns}

We will establish linear competition for patterns as we
did for shapes above.  For patterns $\W$ and $\W'$, define
$$\DomComp(\W,\W') = \{(n_1,n_2,n_1',n_2'): (a,b)=(a',b')\},$$
requiring that both paths end at the same horizontal position.
Notice that on $\DomComp(\W,\W')$, 
the length difference $\ell(\W(n_1,n_2))
- \ell(\W'(n_1',n_2'))$ is constant.  

\begin{lemma}[Linear comparison for patterns]
The comparison functions $\max\III_\W - \max \III_{\W'}$
and $\min\III_\W - \min \III_{\W'}$ are affine on 
a finite partition of $\DomComp(\W,\W')$.

Equivalently, these can be regarded as  affine functions on 
$(n_1,n_2)$, or  affine in $(a,b)$ on those
$(a,b)$ whose fibers are reached by both $\W$ and $\W'$.
\end{lemma}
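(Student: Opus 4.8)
The plan is to run the same argument as in Lemma~\ref{linearComparison}, with the $L$--shaped shadows $\A_\II^{n_1}\A_{\II+1}^{n_2}$ playing the role that isoperimetric arcs played there: although each of $\max\III_\W$ and $\max\III_{\W'}$ is individually a quadratic function of its two run--lengths, the constraint $(a,b)=(a',b')$ defining $\DomComp(\W,\W')$ forces the degree--two parts to cancel, leaving an affine difference on each piece of a suitable rational partition. Throughout I would pass freely between the coordinates $(n_1,n_2,n_1',n_2')$ on $\DomComp(\W,\W')$ and the horizontal position $(a,b)$, using that $(a,b)=n_1\A_\II+n_2\A_{\II+1}+\overline{\C}$ and $(a',b')=n_1'\A_{\II'}+n_2'\A_{\II'+1}+\overline{\C'}$ are affine and (generically) injective.

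First I would recall from Lemma~\ref{lem::unsimp-patt} the ingredients about a single pattern: there is a finite partition of $\N^2$, cut out by linear equations, inequalities and congruences, on each piece of which $C_\W$, $\max\III_\W$ and $\min\III_\W$ are quadratic polynomials in $(n_1,n_2)$; the degree--two term of $C_\W$ is $\frac12 n_1n_2(\A_\II\wedge\A_{\II+1})$; the difference $W-C_\W$ is a \emph{linear} function of $(n_1,n_2)$; on each piece either $\max\III_\W=C_\W$ or $\max\III_\W=C_\W-k_\W N$ for the least $k_\W\ge 0$ with $C_\W-k_\W N\le W$; and $\min\III_\W$ equals either the lowest value reached by the lowering process (a quadratic, then clamped at $0$) or, if that process drops below zero, the unique representative of $C_\W$ modulo $N$ lying in $[0,N)$. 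Because $C_\W-W$ is linear, the residue $(W-C_\W)\bmod N$ is constant on a congruence refinement, so there $k_\W N=(C_\W-W)+c_\W$ for a constant $c_\W\in[0,N)$ and hence $\max\III_\W=W-c_\W$ in the clamped case. Pulling back the partitions for $\W$ and $\W'$ under the affine maps $(n_1,n_2)\mapsto(a,b)$, $(n_1',n_2')\mapsto(a',b')$, intersecting, and refining by congruences, I obtain a finite rational partition of $\DomComp(\W,\W')$ on which, for each of the two patterns, every clamp, every $\max(0,\cdot)$, and every reduction mod $N$ has been resolved into one affine (or constant) branch.

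I would then split into cases by the sectors of $\W$ and $\W'$. If $\W$ and $\W'$ share a sector ($\II=\II'$), then equating $(a,b)=(a',b')$ and using linear independence of $\A_\II,\A_{\II+1}$ shows $n_1-n_1'$ and $n_2-n_2'$ are constant on $\DomComp$, so $n_1n_2-n_1'n_2'$ is affine in $(n_1',n_2')$ and $C_\W-C_{\W'}$ is therefore affine. On a piece where both $\max\III$'s are unclamped this is exactly the comparison function; where both are clamped, the comparison equals $c_{\W'}-c_\W$ (the $W=W(a,b)=W(a',b')$ terms cancel); where exactly one, say $\W$, is clamped, it equals $(W-C_{\W'})-c_\W$, affine because $W-C_{\W'}$ is linear in $(n_1',n_2')$. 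The $\min$--comparison is handled identically, noting that on each refined piece $\min\III_\W$ is either a constant representative mod $N$ or an affine clamp at $0$. If $\W$ and $\W'$ lie in different sectors, the crucial (purely Euclidean) point is that the domain of competition is thin: two distinct cones over edges of $L$ meet only in a shared boundary ray (adjacent sectors) or only at the origin (non--adjacent), so after thickening by the bounded vectors $\overline{\C}$, $\overline{\C'}$ the attainable positions $(a,b)$ lie within a bounded neighbourhood of $\R_{\ge 0}\A_{\II+1}$, or in a bounded set. The bounded case is trivial; in the adjacent case the ``off--ray'' coordinates $n_1$ (for $\W$) and $n_2'$ (for $\W'$) take only finitely many values, and after partitioning by these each quadratic collapses to an affine function of the remaining ``along--ray'' coordinate, whose two incarnations $n_2$ and $n_1'$ again differ by a constant on $\DomComp$ because $(a,b)=(a',b')$ pins down the $\A_{\II+1}$--component.

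The step I expect to be the main obstacle is the bookkeeping in the second and third paragraphs: one has to make sure the partition is refined enough that on every piece, and \emph{for both patterns at once}, each $\max\III=\min(C,\dots)$, each clamp at $0$, and each reduction mod $N$ becomes a single affine or constant branch, and that all these refinements are still described by linear equations, linear inequalities and congruences, so the pieces genuinely form a rational partition. A secondary point worth spelling out, rather than merely asserting, is the thinness of the competition domain in the different--sector case. Once the combinatorics of the branches and the geometry of the cones are nailed down, each remaining branch is the same routine linear computation already carried out for Lemma~\ref{linearComparison}.
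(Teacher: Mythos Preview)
Your proposal is correct and follows the same idea as the paper's (very brief) proof: the tops $\max\III$ are piecewise quadratic with matching leading coefficient, so their difference is piecewise affine, while the bottoms $\min\III$ are already piecewise linear over the rational families supplied by Lemma~\ref{lem::unsimp-patt}. Your sector-by-sector casework, explicit treatment of the clamping, and thinness argument for $\II\neq\II'$ supply considerably more detail than the paper's two-line sketch, which simply asserts the matching of leading coefficients and the linearity of the bottoms.
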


\begin{proof}
The three statements are equivalent because on the appropriate sets,
each of the three quantities, $(n_1,n_2,n_1',n_2')$, $(n_1,n_2)$ and
$(a,b)$ determines the other two by an affine map.
Linearity follows from the fact that the tops of the intervals are
given piecewise by quadratic polynomials with the same leading coefficient,
and the bottoms of the intervals are piecewise linear, over finitely many
rational families that partition $\DomComp$.
\end{proof}

\subsection{Testing geodesity for patterns}

Each pattern $\W$ is easily seen to 
determine maps from $(n_1,n_2)$ to length, horizontal position,
and $\ell(\W(n_1,n_2)) - n_0$.  Notice that for each
$\W$,  $(a,b,\ell)$ is an affine function of $(n_1,n_2)$, and
the map $(n_1,n_2) \mapsto (a,b)$ is injective.

Since $\Patt_K$ is a finite set, we can fix an arbitrary ordering of patterns as we did for shapes.

\begin{definition}
For a  pattern $\W$, let 
$G^\Delta_\W(n)$ be the set of $(a,b)\in \Z^2$ such that $n=n_0(a,b)=|(a,b)|_{\pi(S)}$ and 
$\W$  realizes some  $(a,b,c)$ at length $n+\Delta$.
  \end{definition}

 \begin{lemma}[Positions reached by patterns] For
  each shape $\W$ and each $0\le \Delta \le K$,
the  $G^\Delta_\W(n)$ form  a bounded rational family in $\Z^2$.
\end{lemma}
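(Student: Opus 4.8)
The plan is to mimic, almost verbatim, the structure of the proof of ``Deciding geodesity for shapes,'' replacing the shape machinery by the pattern machinery developed in the preceding subsections. The goal is to show that membership of $(a,b)$ in $G^\Delta_\W(n)$ is cut out by finitely many linear equations, linear inequalities, and congruences in the data $(a,b,n)$, and hence that $\{G^\Delta_\W(n)\}$ is a bounded rational family in $\Z^2$.

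First I would record that for a fixed pattern $\W$ the map $(n_1,n_2)\mapsto (a,b,\ell)$ is affine and $(n_1,n_2)\mapsto(a,b)$ is injective (this is stated just above the lemma). Consequently the set $\{(n_1,n_2): \ell(\W(n_1,n_2))=n\}$ is an elementary family, and pushing forward by the affine map $(n_1,n_2)\mapsto (a,b)$ yields that
$$A_\W(n):=\{(a,b): \text{$\W$ produces a spelling of length $n$ over $(a,b)$}\}$$
is a rational family in $\Z^2$. Next I would express the condition $n=n_0(a,b)$ in these terms: by Bounded Difference (Corollary~\ref{cor::boundedLengthDifference}), $n_0(a,b)$ is determined once we know the shortest pattern-length reaching $(a,b)$, and the set of $(a,b)$ with $n_0(a,b)=n$ can be obtained as $\bigcup_\W A_\W(n)$ intersected with the complement of $\bigcup_{\W}\bigcup_{n'<n} A_\W(n')$ — a finite Boolean combination of rational families, hence rational. (One only ever needs finitely many $\W$ and, by boundedness of the difference, one only needs to look back a bounded number of lengths, but even without that observation the closure properties of polyhedral families suffice.)

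The remaining content is the requirement that $\W$ realizes \emph{some} $(a,b,c)$ at length $n+\Delta$, i.e., that the height interval $\III_\W(a,b)$ at that length is nonempty. Here I would invoke Lemma~\ref{lem::unsimp-patt}: the functions $C_\W$, $C'_\W=\max\III_\W$, and $C''_\W=\min\III_\W$ are quadratic polynomials in $(n_1,n_2)$ on each piece of a partition of $\N^2$ by linear equations, inequalities, and congruences, with $\III_\W$ nonempty exactly on the pieces where $C''_\W\le C'_\W$ and the interval meets the relevant residue class. Since $\W$ produces spellings only of one length parity/range over a given $(a,b)$, ``realizes some $(a,b,c)$ at length $n+\Delta$'' translates, on each partition piece, into the conjunction of the affine constraint $\ell(\W(n_1,n_2))=n+\Delta$ (an affine equation in $(n_1,n_2)$, hence in $(a,b)$), the constraint $n_0(a,b)=n$ (handled above), and the nonemptiness/residue conditions on $\III_\W$ — all of which are linear equalities, inequalities, or congruences once restricted to a piece where the heights are quadratic but the \emph{difference} $C'_\W-C''_\W$ is controlled. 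The one point requiring care is that $C'_\W$ and $C''_\W$ are genuinely quadratic, not linear; but nonemptiness of $\III_\W$ only asks whether a quadratic is $\ge$ another quadratic with the \emph{same leading term} (so the difference is linear) and whether an interval contains a point of a fixed residue class (a congruence condition on the endpoints), so no genuinely quadratic inequality survives. Finally, since every such $G^\Delta_\W(n)$ is contained in the ball of radius $n$ in $\pi(S)$, the family is bounded, completing the proof.

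The main obstacle I anticipate is bookkeeping rather than mathematics: assembling the several partitions — the one from Lemma~\ref{lem::unsimp-patt} describing where $C_\W, C'_\W, C''_\W$ are quadratic, the stratification of $\N^2$ into length classes, and the Boolean combination computing $n_0(a,b)=n$ — into a single common refinement on which every relevant quantity is linear (or a difference-of-equal-leading-term quadratics, i.e.\ linear), and verifying that nonemptiness of $\III_\W$ never forces a true quadratic constraint. Given the closure properties of polyhedral families (closed under union, intersection, complement, and affine push-forward, all stated in the excerpt) this refinement exists, and the argument then goes through exactly as for shapes.
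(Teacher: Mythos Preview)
Your approach is substantially more elaborate than the paper's, and diverges from it in two ways, one of which introduces a real gap.

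The paper's proof is a short direct argument that does not go through the shape-style machinery at all. It observes that on the $\II$th sector of $\M$, the function $n_0(a,b)=|(a,b)|_{\pi(S)}$ is \emph{periodic linear}: affine on each coset of the sublattice generated by $\A_\II$ and $\A_{\II+1}$. Since $(n_1,n_2)\mapsto(a,b)$ is an affine bijection onto its image and $\ell(\W(n_1,n_2))$ is affine in $(n_1,n_2)$, the quantity $\Delta=\ell(\W)-n_0(a,b)$ is a periodic function of $(a,b)$; hence each level set $\{\Delta=\text{const}\}$ is cut out by congruences and linear equalities, and $G^\Delta_\W(n)$ is rational. No height intervals enter.

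Your route to the condition $n_0(a,b)=n$ has a gap. The Boolean combination $\bigcup_\W A_\W(n)\setminus\bigcup_{\W,\,n'<n}A_\W(n')$ computes ``the shortest length at which \emph{some pattern} reaches $(a,b)$ is $n$,'' not ``$n_0(a,b)=n$.'' These agree only if for every $(a,b)$ there is a pattern of length exactly $n_0(a,b)$ reaching it, which you have not justified; the citation of Bounded Difference is a non sequitur, since that corollary compares $|(a,b,c)|_S$ with $\dCC$, not $n_0(a,b)$ with pattern lengths. Establishing this equality amounts to the periodic-linearity observation the paper uses directly.

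Separately, your height-interval analysis is unnecessary for this lemma. The paper reads ``$\W$ realizes some $(a,b,c)$ at length $n+\Delta$'' simply as ``$\W$ produces a spelling over $(a,b)$ of that length''; all counting of which heights are actually hit (and geodesically) is deferred to the polynomial $p^\Delta_\W$ in the subsequent corollary. Your assertion that nonemptiness of $\III_\W$ reduces to comparing two quadratics with the same leading term is also not correct as stated: on the region where the lowering process reaches below $0$, $C''_\W$ is essentially constant while $C'_\W\approx W$ is genuinely quadratic, so their difference is quadratic, not linear.
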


\begin{proof}
The set of $(a,b)$ reached by $\W$ is the push-forward under an affine map of the set of
  non-negative pairs $(n_1,n_2)$.
Now observe that in the $\II$th sector of the plane, the length $n_0=n_0(a,b)$ is a periodic linear
function in which the linear coefficient is independent of $(a,b)$ and
the constant term depends on the congruence class of $(a,b)$ modulo
the group generated by $\A_\II$ and $\A_{\II+1}$.  Note also that if
$\W(n_1,n_2)$ ends over $(a,b)$, then $(n_1,n_2)$ and $(a,b)$ are
affine functions of each other. Of course then length of
$\W(n_1,n_2)$ is an affine function of $(n_1,n_2)$.  Thus the difference
$\ell(\W(n_1,n_2))-n$, which gives $\Delta$, is 
is a periodic function, and the  result follows.
\end{proof}

\begin{corollary}[Counting with patterns]
\label{cor::JOmegaIsPolynomial}
For each $\W$  and $0\le \Delta \le K$ there are  polynomials 
$p_{\W}^\Delta(a,b)$ of degree at most two such
that for $(a,b) \in G_\W^\Delta(n)$ the number of group elements $(a,b,c)$ with $c\ge 0$ 
geodesically spelled by $\W$ at length $n+\Delta$, and by no smaller-numbered pattern, is given by 
$p_{\W}^\Delta(a,b)$.
\end{corollary}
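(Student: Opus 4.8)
The plan is to turn the count into a finite alternating sum of cardinalities of arithmetic progressions in each fiber whose endpoints depend quadratically on $(a,b)$, and then to use the linear comparison lemma for patterns to keep every decision in the argument \emph{linear} so that it lives inside the rational-family calculus. \textbf{Step 1 (rephrase as a lattice-point count).} By the Realization Theorem (Theorem~\ref{thm::shapes}), every $(a,b,c)$ with $0\le c\le W(a,b)$ has a geodesic spelling that is a rearrangement of some pattern, and by the \emph{Simplifying to a Pattern} lemma that pattern has length exactly $|(a,b,c)|$. Writing $\ell_{\W'}(a,b)$ for the common length of the rearrangements of $\W'$ over $(a,b)$ when $(a,b)$ lies in the range of $\W'$, it follows that $|(a,b,c)|=\min\{\ell_{\W'}(a,b):c\in\III_{\W'}(a,b)\}$. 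Hence, for $(a,b)\in G_\W^\Delta(n)$ (so $\ell_\W(a,b)=n+\Delta$), the elements $(a,b,c)$ with $c\ge0$ that are geodesically spelled by $\W$ at length $n+\Delta$ and by no lower-numbered pattern are exactly those with
$$c\in\III_\W(a,b)\setminus\bigcup_{\W'\in\mathcal E(a,b)}\III_{\W'}(a,b),$$
where $\mathcal E(a,b)$ is the finite set of patterns $\W'$ in whose range $(a,b)$ lies and for which either $\ell_{\W'}(a,b)<n+\Delta$, or $\ell_{\W'}(a,b)=n+\Delta$ and $\W'<\W$ (one checks directly that such a $c$ has $|(a,b,c)|=n+\Delta$ with $\W$ smallest-numbered among the patterns realizing it at that length, and conversely). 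So it suffices to show that this cardinality is a polynomial of degree at most two in $(a,b)$ on each piece of a finite partition of $G_\W^\Delta(n)$ into rational families.

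\textbf{Step 2 (freeze the combinatorics on strata).} I will cut $G_\W^\Delta(n)$ into finitely many rational families so that on each one: (i) by \emph{Positions reached by patterns} the range of each $\W'$ is a rational family and $\ell_{\W'}-n_0$ is a periodic piecewise-affine function, so $\mathcal E$ is a fixed finite subset of $\Patt_K$; (ii) by \emph{Unsimplification for patterns} (Lemma~\ref{lem::unsimp-patt}) the endpoints $C''_{\W'}=\min\III_{\W'}$ and $C'_{\W'}=\max\III_{\W'}$, and those of $\W$ itself, are given by single quadratic polynomials in $(a,b)$ whose residues modulo $N$ are periodic in $(a,b)$, so after refining we may take these residues constant; and (iii) by \emph{Linear comparison for patterns} all pairwise differences of these endpoints are affine in $(a,b)$, so after one more refinement by linear inequalities the ordering of $\{C'_\W,C''_\W\}\cup\{C'_{\W'},C''_{\W'}:\W'\in\mathcal E\}$ is constant on each stratum.

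\textbf{Step 3 (count on a stratum, then assemble).} On a fixed stratum, each $\III_{\W'}(a,b)$ is the interval $[C''_{\W'}(a,b),C'_{\W'}(a,b)]$ intersected with a residue class mod $N$; the patterns $\W'\in\mathcal E$ whose residue differs from that of $\III_\W$ meet $\III_\W$ in nothing, and which they are is constant, so we may discard them. The remaining competitor intervals and $\III_\W$ then all sit in one residue class mod $N$, so $\III_\W\setminus\bigcup\III_{\W'}$ is a union of boundedly many subintervals of $\III_\W$ whose endpoints are among the quadratics of Step 2(ii), with the description of which endpoints these are fixed on the stratum by Step 2(iii). Each such subinterval contributes $(\text{right endpoint}-\text{left endpoint})/N+1$ lattice points when this is nonnegative and none otherwise; after one final sign-partition the contribution is $0$ or a quadratic polynomial, with divisibility by $N$ automatic since both endpoints carry the fixed residue of Step 2(ii). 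Summing the boundedly many contributions gives the desired degree-$\le 2$ polynomial $p_\W^\Delta$ on each stratum; the common refinement of all the partitions invoked is a single finite family of rational families on which the statement holds, and this refinement is harmlessly absorbed when the count is fed into Benson's theorem (Theorem~\ref{thm::rational-counting}). If integer coefficients are desired, one refines once more by congruences and re-expands, without raising the degree.

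\textbf{Main obstacle.} The delicate point is Steps 2--3: ensuring that \emph{simultaneously} for all competing patterns the relevant index set $\mathcal E$, the orderings of the interval endpoints, and the residues mod $N$ are all constant on a single rational-family stratum, so that the set difference of arithmetic progressions is governed by a genuine quadratic rather than a floor of one. This is exactly where \emph{Linear comparison for patterns} does the work: it converts every ``which endpoint wins'' question into a linear inequality, so the partition can be refined finitely many times while remaining inside the class of rational families to which Benson's theorem applies.
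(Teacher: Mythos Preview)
Your proposal is correct and follows essentially the same approach as the paper: both identify the desired count as the number of points in $\III_\W(a,b)\setminus\bigcup\III_{\W'}(a,b)$ and then use the linear comparison lemma to reduce the endpoint comparisons to linear data, so that the count becomes a finite signed sum of the quadratic polynomials supplied by Lemma~\ref{lem::unsimp-patt}. The paper's own proof is in fact a two-sentence sketch of exactly this argument; your Steps~2--3 spell out the stratification (by competitor set $\mathcal E$, by residues mod $N$, and by the linear ordering of endpoints) that the paper leaves implicit.
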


\begin{proof}
Clearly, the unstable elements of length $n_0+\Delta$ are those
reached by some pattern $\W$ at length $n_0+\Delta$ but not
by $\W'$ with length $n_0+\Delta'$ for any $\Delta' < \Delta$.

The $p_\W$ are defined by making the  comparisons of the interval $\III_\W$ 
against competing intervals $\III_{\W'}$, and enumerating the points over $(a,b)$ assigned to $\W$
as a finite sum/difference of the appropriate quadratic polynomials.
\end{proof}

\section{The growth series}

The growth series of $(H,S)$ is now given as 
follows.  The generators $S$ determine a constant $K$ so that
the positive-height regular elements are enumerated by 
$$ \SSS^{\sf reg}(x)=
\sum_{\omega} \quad
\sum_{n=0}^\infty  \quad
 \sum_{\Delta=0}^K \quad 
\sum_{G_\omega^\Delta(n)}
 x^{\Delta}\, x^n ,$$ 
where $\omega\in \Shape_K$ are the shapes described above.

The series enumerating unstable elements with $c\ge 0$ is
$$ \SSS^{\sf uns}(x)=
\sum_{\W} \quad
\sum_{n=0}^\infty  \quad
 \sum_{\Delta=0}^K \quad 
\sum_{G_\W^\Delta(n)}
 p_\W^\Delta(a,b) \, x^\Delta \, x^n 
,$$ 
where $\W\in\Patt_K$ are the patterns described above.
The difference in appearance between the two expressions corresponds
to the fact that regular \CC geodesics of a certain length only hit each fiber
in a single point, while unstable \CC geodesics may hit in an interval 
of size that is quadratic in the length.

Both series are rational by Theorem~\ref{thm::rational-counting}, because
$\Shape_K$ and $\Patt_K$ are finite sets, the $G(n)$ are bounded
rational families, and the $p$ are polynomial.
We then appeal to the height-reversing bijection $g\mapsto g^{-1}$ to similarly 
count the elements of non-positive height.  This double-counts the 
elements at height zero.

\begin{lemma}[Zero-height elements]\label{lem::zeroHeight}
Let $\sigma^\zero(n)=\#\{(a,b,0) : |(a,b,0)|_S=n\}$ be the spherical growth function of height-zero
elements.  Then
$\SSS^{\zero}(x)=\sum \sigma^\zero(n) x^n$ is rational.
\end{lemma}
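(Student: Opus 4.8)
The plan is to reduce the count of height-zero elements to a two-dimensional enumeration problem over rational families, exactly in the spirit of the rest of the paper. The key observation is that an element $(a,b,0)$ has word length $n$ precisely when $n = n_0(a,b) = |(a,b)|_{\pi(S)}$ \emph{and} zero lies in the set of heights reachable over $(a,b)$ with $n_0$ letters. By Bounded Difference (Corollary~\ref{cor::boundedLengthDifference}), if $0 \le c \le W(a,b)$ then $|(a,b,c)| \in [n_0, n_0 + K]$; more to the point, the actual word length $|(a,b,0)|$ is some $n_0 + \Delta$ with $0 \le \Delta \le K$, and whether $(a,b,0)$ is realized at a given length $n_0+\Delta$ is governed by the pattern machinery of Section~\ref{sec::competition}, since $(a,b,0)$ is an unstable element (its height is at most $W$, which is nonnegative). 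So the first step is: a height-zero element over $(a,b)$ is geodesically spelled by some pattern $\W \in \Patt_K$ at length $n_0 + \Delta$ iff $0 \in \III_\W(a,b)$ after the appropriate length bookkeeping — i.e. iff $C''_\W(a,b) \le 0 \le C'_\W(a,b)$ and $0$ lies in the correct residue class mod $N$, \emph{and} no pattern realizes height $0$ at a shorter length.

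Next I would set up, for each pattern $\W$ and each $0 \le \Delta \le K$, the set
$$Z_\W^\Delta(n) = \{(a,b) : n = n_0(a,b),\ \W \text{ geodesically realizes } (a,b,0) \text{ at length } n+\Delta,\ \text{and no smaller-numbered pattern does at length } \le n+\Delta\}.$$
I claim each $Z_\W^\Delta(n)$ is a bounded rational family in $\Z^2$. The argument mirrors the proof of Lemma~[Positions reached by patterns] together with Corollary~\ref{cor::JOmegaIsPolynomial}: the condition $n = n_0(a,b)$ cuts out a periodic-linear family in each sector; the condition that $\W$ reaches some height at length $n+\Delta$ is an affine/congruence condition via the injective affine correspondence $(n_1,n_2)\leftrightarrow(a,b)$ and the affine formula for $\ell(\W(n_1,n_2))$; and the condition $0 \in \III_\W(a,b)$ — that is, $\min\III_\W(a,b) \le 0 \le \max\III_\W(a,b)$ with $0$ in the residue class — is, by Lemma~[Linear comparison for patterns] and Lemma~\ref{lem::unsimp-patt}, a conjunction of (piecewise) linear inequalities and congruences in $(a,b)$ over a finite rational partition of $\DomComp$. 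The ``no smaller-numbered pattern wins sooner'' clause is, as in the shape case, finitely many comparisons $\min\III_{\W'} \le 0 \le \max\III_{\W'}$ at lengths $n+\Delta' $ for $\Delta' \le \Delta$ and $\W' < \W$, each linear on rational pieces by the same two lemmas. Boundedness is automatic since $n_0(a,b) = n$ confines $(a,b)$ to a bounded region.

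With this in hand, the series is assembled directly:
$$\SSS^\zero(x) = \sum_{\W \in \Patt_K} \ \sum_{n=0}^\infty \ \sum_{\Delta = 0}^{K} \ \sum_{(a,b) \in Z_\W^\Delta(n)} x^{\Delta}\, x^{n},$$
which is a finite sum (over $\W$ and $\Delta$) of series of the form enumerated by Theorem~\ref{thm::rational-counting} with the constant polynomial $f \equiv 1$ over the bounded rational families $Z_\W^\Delta(n)$, hence rational. (Here the substitution $n \mapsto n+\Delta$ is harmless: replacing $b_i(n)$ by $b_i(n-\Delta)$ in the defining data keeps a polyhedral family polyhedral.) The main obstacle I anticipate is not conceptual but bookkeeping: verifying carefully that the condition ``$0$ is actually \emph{achieved}, not merely bracketed'' — i.e. that $0$ lies in the correct residue class of $\III_\W(a,b) \pmod N$ and that the greedy lowering process in the definition of $\III_\W$ genuinely reaches it — reduces to congruences that are compatible with the rational-family formalism; this requires unwinding Lemma~\ref{lem::unsimp-patt} to see that the endpoints and residue class of $\III_\W$ are governed by linear data in $(n_1,n_2)$, which the lemma already asserts. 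Once that compatibility is confirmed, the rest is routine assembly.
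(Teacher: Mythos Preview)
Your argument is sound and follows the rational-family template used throughout the paper, but it is \emph{not} the route the paper takes. The paper's proof is three lines: it observes that $(a,b,0)\in H(\Z)$ iff $ab$ is even, and then asserts that the problem reduces to counting $\{(a,b):ab\text{ even}\}$ with respect to the abelian word metric on $(\Z^2,\pi(S))$, where lex-least geodesics form a regular language and the sublanguage ending at points with $ab$ even is a regular subset, hence has rational growth. What the paper's shortcut buys is brevity; what your approach buys is robustness in the presence of boost. The paper's reduction tacitly assumes $|(a,b,0)|_S=|(a,b)|_{\pi(S)}=n_0(a,b)$, but this can fail when generators carry nonzero height: with $S=\{(1,0,5),(0,1,0)\}^{\pm}$, the only length-one spelling over $(1,0)$ lands at height $5$, so $|(1,0,0)|_S>1=n_0(1,0)$. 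Your approach handles this discrepancy honestly by carrying the offset $\Delta$ and testing $0\in\III_\W(a,b)$ as a linear-plus-congruence condition on the pattern domain. One small shared caveat: both you and the paper assume $W(a,b)\ge 0$ so that $(a,b,0)$ is always unstable; in the same example $W(-1,0)=w_1(-1,0)=-5$, so $(-1,0,0)$ is technically regular and would need the shape machinery instead---a minor patch in either direction.
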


\begin{proof}
The fiber over $(a,b)$ has an element with $c=0$ if and only if $ab$
is even.  Thus, our problem reduces to counting the set of such $(a,b)
\in \Z^2$ with respect to the generating set $\pi(S)$.  It is
well-known that the set of lex-least geodesics in an abelian group is
a regular language.  Those ending at an element $(a,b)$ with $ab$ even
is a regular subset of these.  The set in question therefore has
rational growth.
\end{proof}

Finally, we have 
$$\SSS(x)=2\sdot\SSS^{\sf reg}(x) + 2\sdot\SSS^{\sf uns}(x)-\SSS^{\zero}(x).$$
This establishes that 
the spherical growth series $\SSS(x)$ and thus also the growth series $\BBB(x)$  is rational 
for any finite generating set of $H(\Z)$, finishing Theorem~\ref{thm::rationalGrowth}.

\section{Applications, remarks, and questions}

\subsection{Languages}

Each shape defines a language $\calL(\omega)$. For $\JJ > \II+1$,
these languages are not regular.  For $\JJ > \II+2$, they are not
context-free. 

This is attributable to non-commutativity: what could be
accomplished with a bounded counter if the group were abelian is a
non-regular language otherwise.  For instance, $\{a^nb^n\}$ is
non-regular, even though $\{(ab)^*\}$ enumerates words with the same
letters.  The words represented by our shapes of geodesics need to be nearly
balanced, and this breaks regularity.

It was pointed out
to us by Cyril Banderier that a recursion with positive integer
coefficients implies the existence of {\em some} regular language enumerated
by the function, though not necessarily the language of geodesics for
$(G,S)$.  This holds in the special case of $(H,\std)$, which is
extremely intriguing.

\subsection{Cone types}

We recall the definition of {\em cone type} from \cite{cannon1984}.

\begin{definition}
Consider the Cayley graph $\Cay(G,S)$ of group $g$ with generating set
$S$.  Given $g \in G$, the {\em cone at $G$}, denoted $C(g)$, consists of all paths
$\sigma$ based at $g$ with the property that  word length $|\sigma(t)|$ 
is strictly increasing along
$\sigma$.  The {\em cone type} of $g$ consists of the cone of $g$
translated to the origin, i.e., $g^{-1}(C(g))$.
\end{definition}

For $\Cay(G,S)$ to have finitely many cone types is almost exactly the
same thing as having the language of geodesics in $\Cay(G,S)$ be a
regular language.  If $\Cay(G,S)$ has finitely many cone types, these
cone types can be used as the states of a finite state automaton which
accepts the language of geodesics.  This is because the cone type of
$G$ tells us exactly which generators are outbound at $g$.  However,
the cone type of $g$ encodes additional information, namely which
edges are ``half outbound":  if an edge $e$ of $\Cay(G,S)$ connects
two elements $g$ and $g'$ with $|g|=|g'|=n$, then the midpoint of this
edge is at distance $n+\frac 1 2$ from the origin.  
We believe that there is no known example
of a group presentation for which 
the language of geodesics is regular, but which has
infinitely many cone types.

From the shape theorem we easily recover the (already known) fact that 
$H$ has infinitely many cone types in every generating set.  In
  particular, it has no generating set where the language of geodesics
  is regular.

To see this, just note that there are infinitely many possibilities for how long a 
geodesic continues in a particular significant direction before turning to the 
successive direction, depending on what shape has reached the point
$g=(a,b,c)$ at what scale.


Brian Rushton has pointed out to us that the presence of infinitely many 
cone types implies that there is no associated subdivision rule.
(See \cite{rushton}.)

\subsection{Almost convexity}

A metric space is called {\em almost convex} $(k)$ or $AC(k)$
if there exists a constant $N(k)$
 such that for any two elements $x,y$ 
 in a common metric sphere $S_n(x_0)$ with $d(x,y)\le k$,
 there is 
a path of length at most $N$ connecting $x$ and $y$ in $B_n(x_0)$.  
That is, convexity would require that for two points on a sphere, 
connecting them inside the ball is efficient; 
almost-convexity is the existence of an additive bound on the inefficiency.
This was defined by Cannon in \cite{cannon1987}, where he showed
that for Cayley graphs of finitely generated groups, $AC(2)\implies 
AC(k) \quad \forall k$.  The importance of this property is that it gives 
a fast algorithm for constructing the Cayley graph.  
Almost-convexity is known for hyperbolic groups and 
virtually abelian groups with any finite set of generators, and for Coxeter groups and certain 3--manifold groups
with standard generators.
Several weakenings and strengthenings of the property
have been proposed and studied by various authors.  It was established for $H(\Z)$ 
with standard generators in \cite{shapiro1989}, but to our knowledge
has not been extended to arbitrary generators, which we settle here
by using once again the comparison of the \CC and word metrics.

Intriguingly, the dissertation of Carsten Thiel \cite{thiel} establishes that higher Heisenberg groups
are {\em not} AC in their standard generators, which corresponds remarkably to 
Stoll's finding of non-rational growth for the same examples.  

\begin{lemma} 
The \CC metric on $H(\R)$ induced by any rational polygonal norm is almost convex.
\end{lemma}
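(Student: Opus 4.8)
The plan is to work entirely in the geometric model $\mathbb{R}^3$ with the \CC metric, exploiting the explicit structure of \CC geodesics recalled in Section~\ref{sec::CC}. Fix two points $x,y$ at \CC-distance $n$ from $\zero$ with $\dCC(x,y)\le 2$ (by Cannon's reduction it suffices to treat $k=2$, or any fixed $k$); we must connect $x$ to $y$ inside the ball $B_n(\zero)$ by a path of bounded length. First I would reduce to a normalized situation using the dilation $\delta_t(a,b,c)=(ta,tb,t^2c)$, which is a metric similarity: after rescaling we may assume $n=1$, so that $x,y$ lie on the unit \CC-sphere and are at distance $\le 2/n$, hence arbitrarily close together. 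Thus the real statement to prove is a local one: \emph{the unit \CC-sphere, as a subset of $(\mathbb{R}^3,\dCC)$, has the property that nearby points on it are connected by short paths staying in the closed unit ball}, with a modulus-of-continuity bound that is uniform. Rescaling back by $\delta_n$ multiplies both the connecting-path length and the ambient distances by $n$, turning a bound like ``length $\le C\cdot\dCC(x,y)$'' on the unit sphere into ``length $\le C\cdot\dCC(x,y)\le 2C$'' in the ball of radius $n$, which is exactly $AC(2)$ with $N=2C$.

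Next I would prove this local statement by a direct geodesic-interpolation argument, using Lemma~\ref{cc-types} to get a handle on the sphere. Parametrize points of the \CC-sphere of radius $\ell$ by their shadow $(a,b)$ with $\normL{(a,b)}\le\ell$ together with a choice of height in $[-c(a,b,\ell),\,c(a,b,\ell)]$ (for the unstable part) or a sign and an exact height (for the regular part). The key quantitative inputs are: (i) the height function $c(a,b,\ell)$ is a continuous, in fact piecewise-smooth, function of $(a,b,\ell)$ — this follows from the isoperimetric description, since $c$ is the area enclosed by an arc of $s\isoper+\QQ$ through $\zero$ and $(a,b)$ of $L$-length $\ell$, and the parameters $s,\QQ$ depend continuously (piecewise-algebraically) on $(a,b,\ell)$ by Lemma~\ref{lem::comb-types}; and (ii) \CC geodesics between two points that are close together are short and, by Lemma~\ref{cc-types} together with the homogeneity, stay within a controlled neighborhood of the sphere. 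Concretely, given $x=(a,b,c)$ and $y=(a',b',c')$ both on $S_\ell^{\CC}(\zero)$ and close, I would connect them by first sliding the shadow along the segment from $(a,b)$ to $(a',b')$ while keeping the height equal to the appropriate sphere-height for the current length $\ell$ (adjusting the height continuously so as to stay on the sphere), and then, if necessary, moving within the fiber. Each such move is an admissible path whose $L$-length I can estimate: the shadow-move has $L$-length $O(\normL{(a,b)-(a',b')})$, and the fiber-adjustment, which changes only height, must be realized by a small admissible loop whose $L$-length is $O(\sqrt{|\Delta c|})$ by the isoperimetric inequality in $(\M,\normL\cdot)$ — and since $|\Delta c|=O(\dCC(x,y)\cdot\ell)$ on the sphere, after the $\delta_{1/n}$ normalization this is $O(\dCC(x,y))$. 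Throughout, one keeps the path inside $B_\ell^{\CC}(\zero)$ by never exceeding the sphere-height for the instantaneous shadow, which is where the precise shape of the isoperimetrix (rationality is irrelevant here, only convexity) is used.

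The main obstacle I anticipate is handling the transition across the ``regular/unstable threshold'' of Lemma~\ref{cc-types}: when $x$ and $y$ are close but happen to straddle the critical height $c_0(a,b)$ — where the geodesic type changes and the height function $c(a,b,\ell)$ has a corner as a function of $\ell$ — or when their shadows lie near a significant direction, where Lemma~\ref{cc-types} warns that unstable geodesics can fail to fellow-travel arbitrarily badly. The resolution is that these bad phenomena concern \emph{non-uniqueness and fellow-traveling of geodesics}, not the \emph{existence of a short connecting path through the ball}: even where geodesics are wildly non-unique, any two close points on the sphere are connected by an explicit short admissible path built from a shadow-segment plus an area-correcting loop as above, and the relevant quantities ($c_0$, the area-correction, the scale $s$) are all continuous in $(a,b,\ell)$ even across the threshold, with moduli of continuity depending only on the fixed polygon $L$. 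So I would phrase the estimate purely in terms of these continuous quantities and compactness of the unit sphere, avoiding any reliance on geodesic stability, and then conclude $AC(2)$ by the dilation argument.
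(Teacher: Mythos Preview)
Your dilation reduction is a legitimate move: it converts almost convexity into the statement that any two points $x',y'$ on the unit \CC sphere at distance $\epsilon$ can be joined inside the unit ball by an admissible path of length $\le C\epsilon$. But your construction of that path has two real gaps. First, a curve whose shadow runs along the segment from $(a,b)$ to $(a',b')$ while its height is pinned to ``the appropriate sphere-height'' is \emph{not admissible}: for an admissible curve the height is determined by the shadow via $\dot z=\tfrac12(x\dot y-y\dot x)$, so you cannot independently force it onto the sphere. If instead you take the honest admissible lift of the segment, the residual height to correct at the end is not $|\Delta c|$ but the third coordinate of $x^{-1}y$, namely $\Delta c-\tfrac12(ab'-a'b)$. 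Your stated bound $|\Delta c|=O(\dCC(x,y)\cdot\ell)=O(\epsilon)$ on the unit sphere would give a correcting loop of length $O(\sqrt\epsilon)$, which rescales to $O(\sqrt n)$ and is unbounded---so the scaling argument breaks as written. (The correct observation is that the third coordinate of $x^{-1}y$ is $O(\dCC(x,y)^2)=O(\epsilon^2)$, which does give a loop of length $O(\epsilon)$.) Second, and this is the heart of the matter, you never verify that the path stays in the ball. The area-correcting loop has to be drawn somewhere in the plane; if the basepoint's shadow sits on the boundary of the $L$--disk---which is exactly the unstable case---the loop may exit the disk and hence the \CC ball. Invoking ``compactness of the unit sphere'' does not manufacture the uniform room you need.

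The paper avoids all of this by working directly at scale $n$ and using radial geodesics rather than the sphere. It first arranges (by a short case analysis: both unstable in the same sector, both regular of the same combinatorial type, or one projecting near the origin) that there exist geodesics $\overline{\zero x}$ and $\overline{\zero y}$ whose shadows $K$--fellow-travel, appealing to Lemma~\ref{lem::comb-types} for the regular case. Then it backtracks a large fixed distance $mK$ along each geodesic---initial segments of geodesics lie in the ball automatically---crosses over between the two shadows, and splices a height-correcting parallelogram at that interior point. Choosing $m$ large guarantees the parallelogram fits inside $\ccball_n$ with room to spare, and the total length is bounded by a constant depending only on $m$, $K$, and the norm. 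This fellow-traveling-plus-backtracking construction is what makes containment in the ball immediate; it is the missing mechanism in your sketch.
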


\begin{proof}
Consider $x,y\in \ccsphere_n$ with $\dCC(x,y)\le 2$.
First we show that if there exist geodesics $\overline{\zero x}$ and $\overline{\zero y}$ that $K$--fellow-travel in projection, 
then there exists a connecting path $x\to y$ of bounded length inside the ball. 
To construct this path, begin with a constant $m\gg 1$.  We will build a path from $\pi(x)$ to $\pi(y)$ as follows:  backtrack distance $mK$ along 
 $\pi(\overline{\zero x})$.
Connect geodesically to the point $w$ that is $n-mK$ from the origin along $\pi(\overline{\zero y})$ and finish by connecting $w$ to $\pi(y)$ 
along $\pi(\overline{\zero y})$.  This path has length at most $(2m+1)K$.  Its lift connects $x$ not to $y$ but to something else in the same fiber over $\pi(y)$, 
differing in height by at most $mK^2$ because that is the most area that can be contained in the ``rectangular" strip enclosed by the path we have built.
To correct this, we can splice a loop into our planar path at the point $w$.  This loop follows 
a parallelogram with sides $t\U$ and $t\V$ for some successive significant generators, where 
$t$ is chosen so that the area of the parallelogram, $t^2(\U\wedge\V)$,  is the height differential to be made up.  
This has length at most $4\sqrt{\frac{m}{\U\wedge\V}}K$.  Since $m$ was chosen to be large, this length is less than $mK$ and so the lift of the concatenated path stays inside $\ccball_n$.
Thus we have connected $x$ to $y$ by a path inside the ball, of length bounded independent of $x,y,n$.

To complete the proof, we must reduce to this case.
By possibly inserting one extra point $z$ and separately considering the two pairs $x,z$ and $z,y$, we will cover all possibilities with the following cases.

{\bf Case 1:}  $x,y$ both unstable and in the same sector.

Then there are fellow-traveling geodesics as required:  if the sector is between significant directions $\A_\II$ and $\A_{\II+1}$, then 
$x$ is reached in exactly one way by a geodesic whose shadow is of the form $\A_\II^i\A_{\II+1}^j\A_\II^k$.  Likewise $y$ has a unique such 
geodesic, and they must fellow-travel to reach nearby endpoints.

{\bf Case 2:} $x,y$ both regular and of the same combinatorial type.

In this case, the geodesics from the origin are unique, and both project to 
 $P$--arcs for the defining polygon $P$ of the norm with the same combinatorial type.
From the fellow-traveling lemma for $P$--arcs (Lemma~\ref{lem::comb-types}) we know that these fellow-travel in projection.

{\bf Case 3:} One of $x,y$ projects to the origin (say $y=(0,0,c)$).  

In this case we fix any geodesic from the origin to $x$.  There are many geodesics reaching $y$ (corresponding to choosing any starting position on $P$), 
and we can take one of the same combinatorial type as the path chosen for $x$.  These then fellow-travel in projection.
\end{proof}

\begin{theorem} The Heisenberg group is almost convex with any word metric. \end{theorem}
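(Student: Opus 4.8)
The plan is to deduce almost convexity of $(H(\Z),S)$ from the almost convexity of the corresponding \CC metric on $H(\R)$, which has just been established, together with the Bounded Difference result (Corollary~\ref{cor::boundedLengthDifference}) that the embedding $H(\Z)\hookrightarrow H(\R)$ is a $(1,K)$--quasi-isometry. The point is that $AC(2)$ in the word metric reduces, via Cannon's theorem, to bounding the detour needed to connect two word-sphere points at distance $\le 2$ inside the word-ball, and all such statements are stable under a bounded additive perturbation of the metric.

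First I would fix $x,y\in H(\Z)$ with $|x|_S=|y|_S=n$ and $|x^{-1}y|_S\le 2$. By Bounded Difference, $|\dCC(x,\zero)-n|\le K$ and likewise for $y$, and $\dCC(x,y)\le 2+2K$. Enlarging $k$ in the definition of $AC(k)$ for the \CC metric (using Cannon's implication $AC(2)\Rightarrow AC(k)$, which applies to the word metric, but the analogous elementary argument — subdividing a length-$(2K{+}2)$ \CC-geodesic segment between $x$ and $y$ into unit steps and iterating the $AC(2)$-type bound — works for the \CC metric as well), I obtain a path $p$ in $H(\R)$ from $x$ to $y$ of length at most some $N'=N'(S)$ that stays inside the \CC-ball $\ccball_{n+K}$. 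Then I would replace $p$ by a nearby lattice path: since $x,y\in H(\Z)$ and the metrics differ by at most $K$, one can follow $p$ while snapping to lattice points, producing a word path from $x$ to $y$ of length at most $N'+C$ (for a constant $C$ absorbing the snapping overhead and the quasi-isometry constant). Every lattice point on this path lies within \CC-distance $K+O(1)$ of $p$, hence within \CC-distance $n+O(K)$ of $\zero$, hence — by Bounded Difference again — within word-distance $n+O(K)$ of the identity. So the path lies in the word-ball $B_{n+O(K)}$, not yet $B_n$.

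The main obstacle, and the step requiring the most care, is closing this gap: getting the connecting path back inside $B_n$ rather than $B_{n+O(K)}$. The standard resolution is to observe that $AC(2)$ only requires a bounded detour inside $B_n$, so it suffices to show that any point at word-distance $n+j$ from $\zero$ (for $j$ bounded) that lies within bounded distance of \emph{both} $x$ and $y$ can be pushed back onto $S_n$ by a bounded-length correction — or, more cleanly, to run the \CC almost-convexity argument not for the ball of radius $n+K$ but directly in a way that respects the true word-sphere. Concretely, I would geodesically connect $x$ to $\zero$ and $y$ to $\zero$ by \emph{word} geodesics (length exactly $n$), invoke Theorem~\ref{thm::shapes} so that each shadow is $K$--close to a \CC-geodesic shadow, and then run the explicit parallelogram-splicing construction from the proof of the preceding Lemma directly on these word geodesics: backtrack a bounded distance $mK$ along one, cross over to the other, and splice in a small parallelogram loop to fix the height discrepancy. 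Because the word geodesics have length exactly $n$ and the crossover/splice happens at depth $n-mK$ with total inserted length $<mK$, the resulting word path stays inside $B_n$. The case analysis (both endpoints unstable in the same sector; both regular of the same combinatorial type; one endpoint over the origin) is exactly the one used in the \CC proof, since Theorem~\ref{thm::shapes} guarantees the required fellow-traveling of shadows. This yields an $N=N(S)$ bound and hence $AC(2)$; Cannon's theorem then upgrades to $AC(k)$ for all $k$.

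\begin{proof}
By Cannon's result that $AC(2)$ implies $AC(k)$ for all $k$, it suffices to treat $k=2$. Let $x,y\in H(\Z)$ with $|x|_S=|y|_S=n$ and $\dCC$-distance (equivalently, by Corollary~\ref{cor::boundedLengthDifference}, word distance up to $K$) at most a bounded amount. Fix word geodesics from $\zero$ to $x$ and from $\zero$ to $y$; by Theorem~\ref{thm::shapes} each has a shadow $K$--close to the shadow of a \CC geodesic. Arguing by cases exactly as in the previous lemma — (1) $x,y$ both unstable in a common sector, (2) $x,y$ both regular of the same combinatorial type, (3) one of them over the origin, and using an intermediate point $z$ to reduce the general situation to these — we obtain in each case word geodesics to $x$ and to $y$ whose shadows $K$--fellow-travel. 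Now run the explicit construction from the proof of that lemma verbatim on these word geodesics: choose $m\gg 1$, backtrack distance $mK$ along the first shadow, connect geodesically in $\M$ to the point at distance $n-mK$ along the second shadow, then follow the second shadow to $\pi(y)$, and finally splice a parallelogram loop with sides $t\U,t\V$ (for successive significant generators) at the crossover point, with $t$ chosen so that $t^2(\U\wedge\V)$ equals the height discrepancy $\le mK^2$; this loop has length $\le 4\sqrt{m/(\U\wedge\V)}\,K<mK$. The lifted word path connects $x$ to $y$, has length at most $(2m+1)K+mK$, and every point on it lies within bounded \CC-distance of the chosen geodesics, hence within \CC-distance $n+O(K)$ of $\zero$; since the crossover and splice occur at depth $\le n-mK+mK=n$ along genuine word geodesics, Corollary~\ref{cor::boundedLengthDifference} shows the path in fact stays inside $B_n$. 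This bound is independent of $x,y,n$, establishing $AC(2)$ and hence almost convexity for every finite generating set.
\end{proof}
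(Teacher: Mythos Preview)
Your approach is different from the paper's. The paper uses the \CC almost-convexity lemma as a black box: backtrack $p\gg 2K$ steps along word geodesics to $h_1,h_2$ with $|h_i|=n-p$, invoke \CC almost-convexity to connect them by a \CC path inside $\ccball_{n-p+K}$, then snap that \CC path to nearby lattice points and join successive snapped points by short word paths; the bounded overhead from snapping lands everything in $\ccball_{n-p+2K+2\Delta}\cap H(\Z)\subseteq B_n$ because $p$ was chosen large. You instead try to rerun the lemma's internal parallelogram construction directly on word geodesics, via Theorem~\ref{thm::shapes}.

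As written, your execution has a gap. First, ``connect geodesically in $\M$'' produces a norm-geodesic in the plane whose lift is an admissible curve in $H(\R)$, not an edge-path in the Cayley graph, so your ``lifted word path'' is not a word path. Second, even after replacing that crossover by a short spelling and taking integer $t$ in the parallelogram, the area $t^2(\U\wedge\V)$ cannot in general match the height discrepancy exactly; after the splice you arrive at some $h_1''$ with $\pi(h_1'')=\pi(h_2)$ but $z(h_1'')\ne z(h_2)$, and the forward leg is then a \emph{central translate} of the tail of the second word geodesic, whose points satisfy only $|\cdot|\le (n-mK+j)+O(1)$, possibly exceeding $n$. Your sentence ``the crossover and splice occur at depth $\le n-mK+mK=n$'' controls only the splice loop, not this forward leg, and invoking Bounded Difference does not close that $O(1)$ overshoot. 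The repair is either to insert a further bounded-length height correction landing \emph{exactly} on $h_2$ before proceeding (so the forward leg is genuinely on the second word geodesic and hence in $B_n$), or---as the paper does---to treat the \CC connection as a black box and absorb all lattice-approximation error by backtracking sufficiently far first.
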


\begin{proof}
Start with $g_1,g_2$ with $|g_1|=|g_2|=n$ and $|g_1g_2^{-1}|\le 2$, and 
let $K$ be the constant bounding the difference between the word
and \CC metrics, as in Cor~\ref{cor::boundedLengthDifference}.  
Then if $B_n$ is the ball of radius $n$ in the word metric and 
$\ccball_n$ is the ball of radius $n$ in the associated \CC metric, 
we have $B_n \subset \ccball_{n+K}$ and 
$\ccball_{n-K}\cap H(\Z) \subseteq B_n$.

Fix any $p\gg 2K$.  Let $h_1$ be a group element obtained by backtracking
$p$ steps along a geodesic spelling of $g_1$, so that $|h_1|=n-p$, and 
define $h_2$ similarly.  The distance $|h_1h_2^{-1}|$ is at most 
$2+2p$, and since the (continuous) group is $AC(2+2p)$, there is a constant
$N(2+2p)$ so that a \CC path $\gamma$ exists between $h_1$ and $h_2$ 
of length at most $N$ and contained totally inside $\ccball_{n-p+K}$.
As $\gamma$ is traversed from $h_1$ to $h_2$, construct an ordered
set of integer points by choosing a nearest point at each time.  
Since the diameter of a fundamental domain for $H(\Z)$ is bounded,
say by $\Delta$,  
each of these points is contained in the $\Delta$--neighborhood of 
$\gamma$ and therefore each is within $2\Delta$ \CC distance of the previous
and next point in the sequence.  These round-off points all lie in 
$\ccball_{n-p+K+\Delta}$.  Two successive points can be connected by 
a word path of length at most $2\Delta+K$, and the word path 
from $h_1$ to $h_2$ built by concatenating these must lie inside 
$\ccball_{n-p+2K+2\Delta}$.  There are at most $N/2\Delta$
round-off points, so the total length of the word path from $h_1$ 
to $h_2$ is bounded by $(N/2\Delta)(2\Delta+K)$.
Since $p$ was chosen to ensure that $n-p+K+2\Delta<n-K$, 
this path lies inside $\ccball_{n-K}\cap H(\Z) \subseteq B_n$.
Piecing this together we obtain a path from $g_1$ to $g_2$ inside
$B_n$ of length at most $(N/2\Delta)(2\Delta+K)+2p$.
\end{proof}

\subsection{Open questions}

\subsubsection{Scope of rational growth in the nilpotent class}

Our argument should carry through with small modifications for groups that are virtually 
$H(\Z)\times \Z^d$.  
We know from Stoll's result that not all two-step groups have rational growth, even with 
respect to their standard generators.  However it is possible (for instance)
that free nilpotent groups do.

\begin{question} Which nilpotent groups have rational growth in all generating sets? \end{question}

On the other hand, one could try to mimic and extend the Stoll construction.

\begin{question} Does every nilpotent group have rational growth with respect to at least one
generating set?  In the other direction, for which nilpotent groups is the fundamental volume 
transcendental for standard 
generators (which would rule out rationality by Thm~\ref{thm::stoll})?
\end{question}

\subsubsection{Period and coefficients}

In the polynomial range (i.e., $f(n)\le An^d$ for some $A,d$), rational growth is equivalent to 
the property that $f(n)$  is  {\em eventually quasi-polynomial}, i.e., 
there are a finite period $N$,  polynomials $f_1,\dots,f_N$, and a threshold $T$ such that 
$$n\ge T, \quad n=kN+i \implies f(n)=f_i(n).$$
For example, Shapiro's computation of the spherical growth for the Heisenberg group with standard
generators
showed it to be eventually quasipolynomial of period twelve, and in fact only the constant term
oscillates:
$$\sigma(n)=\frac{1}{18}\left( 31n^3-57n^2+105n +c_n\right),$$
where $c_n=-7,-14,9,-16,-23,18,-7,32,9,2,-23,0$, and then repeats mod $12$, for $n\ge 1$.  
(So that $\sigma(1)=4$, $\sigma(2)=12$, and so on.)

It follows that the (ball) growth function $\beta(n)=\sum_{k=0}^n \sigma(k)$ is 
also quasipolynomial of period twelve,
with only its constant term oscillating.  We note that this implies that the growth function
for standard generators is within bounded distance of a true polynomial in $n$.

Preliminary calculations indicate that several other generating sets
also have the property that only the constant terms oscillate; in these examples, the periods 
relate both to the sidedness of the fundamental polygon and to the index of the 
sublattice of $\Z^2$ generated by its extreme points.  

\begin{question} How does the generating set $S$ determine the period of quasipolynomiality
of the growth function?  
Which coefficients oscillate?  We know that the top coefficient of $\beta(n)$ 
is the volume of the \CC ball; is the second 
coefficient well-defined, and if so is it a ``surface area"?  
Are all growth functions bounded distance from polynomials?
\end{question}


\end{document}